\documentclass[12pt]{article}
\usepackage{amsmath}
\usepackage{latexsym}
\usepackage{amssymb}
%
%
\newtheorem{thm}{Theorem}[section]
\newtheorem{la}[thm]{Lemma}
\newtheorem{Defn}[thm]{Definition}
\newtheorem{Remark}[thm]{Remark}
\newtheorem{Note}[thm]{Note}
\newtheorem{prop}[thm]{Proposition}

\newtheorem{cor}[thm]{Corollary}
\newtheorem{Example}[thm]{Example}
\newtheorem{Examples}[thm]{Examples}
\newtheorem{Problems}[thm]{Problems}

\newtheorem{Problem}[thm]{Problem}
\newtheorem{Convention}[thm]{Convention}
\newtheorem{Number}[thm]{\!\!}
\newenvironment{defn}{\begin{Defn}\rm}{\end{Defn}}

\newenvironment{example}{\begin{Example}\rm}{\end{Example}}

\newenvironment{rem}{\begin{Remark}\rm}{\end{Remark}}
\newenvironment{numba}{\begin{Number}\rm}{\end{Number}}
\newenvironment{proof}{{\noindent\bf Proof.}}%
                  {\nopagebreak\hspace*{\fill}$\Box$\medskip\medskip\par}   
\newcommand{\Punkt}{\nopagebreak\hspace*{\fill}$\Box$}
\newcommand{\wb}{\overline}
\newcommand{\ve}{\varepsilon}
\newcommand{\at}{\symbol{'100}}

\newcommand{\wt}{\widetilde}
\newcommand{\tensor}{\otimes}
\newcommand{\impl}{\Rightarrow}

\newcommand{\mto}{\mapsto}

\newcommand{\isom}{\cong}

\newcommand{\N}{{\mathbb N}}
\newcommand{\R}{{\mathbb R}}

\newcommand{\K}{{\mathbb K}}

\newcommand{\Z}{{\mathbb Z}}

\newcommand{\cU}{{\cal U}}

\newcommand{\sub}{\subseteq}
\DeclareMathOperator{\GL}{GL}

\newcommand{\aeq}{\Leftrightarrow}

\DeclareMathOperator{\pr}{pr}

\DeclareMathOperator{\id}{id}

\newcommand{\cA}{{\cal A}}

\newcommand{\cS}{{\cal S}}

\newcommand{\cL}{{\mathcal L}}
\newcommand{\obs}{{\footnotesize\rm s}}
\newcommand{\obc}{{\footnotesize\rm c}}
\newcommand{\obu}{{\footnotesize\rm u}}
\newcommand{\obcs}{{\footnotesize\rm cs}}

\DeclareMathOperator{\Lip}{Lip}

\newcommand{\sbull}{{\scriptscriptstyle \bullet}}

\DeclareMathOperator{\Pol}{Pol}

\begin{document}
\begin{center}
{\Large\bf Invariant Manifolds for Analytic Dynamical\\[2mm]
Systems over Ultrametric Fields}\\[6mm]
{\bf Helge Gl\"{o}ckner}\vspace{4mm}
\end{center}
\begin{abstract}
\hspace*{-7.2 mm}
We give an exposition
of the theory of invariant manifolds around a fixed point,
in the case of time-discrete,
analytic dynamical systems over a
complete ultrametric field $\K$.
Typically,
we consider an analytic manifold $M$ modelled
on an ultrametric Banach space
over $\K$, an analytic diffeomorphism
$f\colon M \to M$,
and a fixed point $p$ of $f$.
Under suitable
assumptions on the tangent map $T_p(f)$, we
construct a centre-stable manifold,
a centre manifold,  respectively,
an $a$-stable manifold
around~$p$,
for a given real number $a\in \;]0,1]$.\vspace{3mm}
\end{abstract}
{\footnotesize {\em Classification}:
37D10 (Primary) 
46S10, 
26E30 (Secondary)\\[2.5mm]
{\em Key words}: Dynamical system, fixed point,
invariant manifold, stable manifold, centre manifold, ultrametric field,
local field, non-archimedean analysis, analytic map, Lie group,
contractive automorphism, contraction group}\\[3.4mm]
{\footnotesize{\bf Contents}\\[1.6mm]
1.\hspace*{3.2mm}Introduction and statement of the main results\dotfill\pageref{secintro}\\
2.\hspace*{3.2mm}Preliminaries and notation\dotfill\pageref{secprepa}\\
3.\hspace*{3.2mm}Centre-stable manifolds\dotfill\pageref{seccs}\\
4.\hspace*{3.2mm}Centre manifolds\dotfill\pageref{secc}\\
5.\hspace*{3.2mm}Mappings between sequence spaces\dotfill\pageref{secseq}\\
6.\hspace*{3.2mm}Construction of local stable manifolds\dotfill\pageref{secirw}\\
7.\hspace*{3.2mm}Global stable manifolds\dotfill\pageref{secglob}\\
8.\hspace*{3.2mm}Local unstable manifolds\dotfill\pageref{secirw3}\\
9.\hspace*{3.2mm}Spectral interpretation of hyperbolicity\dotfill\pageref{secfin}\\
10. Behaviour close to a fixed point\dotfill\pageref{seccon}\\
11. When $W_a^\obs(f,p)$ is not only immersed\dotfill\pageref{notonly}\\
12. Further conclusions in the finite-dimensional case\dotfill\pageref{adepend}\\
13. Specific results concerning automorphisms of Lie groups\dotfill\pageref{seclie}\\[1.6mm]
Appendix A: Proof of Proposition~\ref{thmloccsbb}\dotfill\pageref{sec1app}\\
Appendix B: Proof of Theorem~\ref{locumfd}\dotfill\pageref{sec2app}}
\section{\hspace*{-2.5mm}Introduction and statement of main results}\label{secintro}
In this article, we construct various types of invariant manifolds
for analytic dynamical systems over complete ultrametric fields.
The invariant manifolds are useful in the theory of Lie groups
over local fields, where they allow results to be extended
to ground fields of positive characteristic,
which previously where available only in characteristic $0$
(i.e., for $p$-adic Lie groups). The results also constitute
a first step towards a theory of partially hyperbolic
dynamical systems over complete ultrametric fields.\\[2.5mm]
{\bf Definitions and main results.}
As in the real case, hyperbolicity\linebreak assumptions
are essential for a discussion of invariant manifolds.
To explain the appropriate conditions
in the ultrametric case, let $E$ be an ultrametric
\linebreak
Banach space over a complete
ultrametric field $(\K,|.|)$.
Let $\alpha \colon E\to E$ be a continuous
$\K$-linear map,
and $a\in \; ]0,\infty[$.
%
\begin{defn}\label{defahyp}
We say that $\alpha$ is \emph{$a$-hyperbolic}
if there exist $\alpha$-invariant
vector subspaces $E_{a,\obs}$ and $E_{a,\obu}$ of $E$ such that
$E=E_{a,\obs} \oplus E_{a,\obu}$,
and an ultrametric norm $\|.\|$ on $E$ defining its topology,
with properties (a)--(c):
\begin{itemize}
\item[(a)]
$\|x+y\|=\max\{\|x\|,\|y\|\}$
for all $x\in E_{a, \obs}$ and $y\in E_{a,\obu}$;
\item[(b)]
$\alpha_2:=\alpha|_{E_{a,\obu}}$
is invertible;
\item[(c)]
$\|\alpha_1\|< a$ and $\frac{1}{\|\alpha_2^{-1}\|}>a$
holds for the operator norms with respect to $\|.\|$,
where $\alpha_1:=\alpha|_{E_{a,\obs}}$
(and $\frac{1}{0}:=\infty$).
\end{itemize}
Then $E_{a,\obs}$ is uniquely determined (see Remark~\ref{hypsuniq}).
If $a=1$, we also write $E_\obs:=E_{1,\obs}$
and $E_\obu:=E_{1,\obu}$.
\end{defn}
As is to be expected, $a$-hyperbolicity
can be read off from the spectrum of $\alpha$ if $E$ is finite-dimensional
(see Corollary \ref{spechyper}):
Then $\alpha$ is $a$-hyperbolic
if and only if $a\not=|\lambda|$
for each eigenvalue $\lambda$ of
$\alpha\tensor_\K\id_{\wb{\K}}$ 
in an algebraic closure $\wb{\K}$.\\[2.5mm]
Now consider an analytic manifold $M$ modelled on an ultrametric Banach
space $E$ over $\K$ (as in \cite{Bo1}).
Let $f\colon M\to M$
be an analytic diffeomorphism,
and $p\in M$ be a fixed point of $f$.
%
\begin{defn}\label{defwas}
Given $a\in \;]0,1]$, we define $W_a^\obs(f,p)\sub M$,
the \emph{$a$-stable set} around $p$ with respect to $f$,
as the set of all $x\in M$ such that
%
\begin{equation}\label{dewaseq}
\mbox{$f^n(x)\to p\;$  as $\;n\to\infty\;$
and $\; a^{-n}\|\kappa(f^n(x))\|\to 0\, $,}
\end{equation}
for some (and hence every) chart $\kappa\colon U\to V\sub E$
of $M$ around $p$ such that $\kappa(p)=0$,
and some (and hence every) ultrametric norm $\|.\|$ on $E$
defining its topology.\footnote{See Remark \ref{indep} for the independence
of the choice of $\kappa$ and $\|.\|$.}
\end{defn}
It is clear from the definition that $W_a^\obs:=W_a^\obs(f,p)$
is stable under $f$, i.e., $f(W_a^\obs)=W_a^\obs$.
Now $a$-hyperbolicity of $T_p(f)$ ensures
that $W_a^\obs$ is a manifold, the
\emph{$a$-stable manifold} around $p$ with respect
to $f$ (see Section \ref{secglob}):
%
%
\begin{thm}[Ultrametric Stable Manifold Theorem]\label{ultrastabm}
Let $M$ be an analytic manifold modelled on an ultrametric Banach
space over a complete ultrametric field~$\K$.
Let $f\colon M\to M$
be an analytic diffeomorphism
and $p\in M$ be a fixed point of $f$.
If $a\in \;]0,1]$ and $T_p(f)\colon T_p(M)\to T_p(M)$ is $a$-hyperbolic,
then there exists a unique analytic manifold structure
on $W_a^\obs:=W_a^\obs(f,p)$ such that {\rm(a)--(c)} hold:
\begin{itemize}
\item[\rm(a)]
$W_a^\obs$ is an immersed submanifold of $M$;
\item[\rm(b)]
$W_a^\obs$ is tangent to the $a$-stable subspace $T_p(M)_{a, \obs}$
$($with respect to $T_p(f))$, i.e.,
$T_p(W_a^\obs)= T_p(M)_{a,\obs}$;
\item[\rm(c)]
$f$ restricts to an analytic diffeomorphism $W_a^\obs\to W_a^\obs$.
\end{itemize}
Moreover, each neighbourhood of $p$ in $W_a^\obs$ contains
an open neighbourhood $\Omega$ of $p$ in $W_a^\obs$
which is a submanifold of $M$,
is $f$-invariant $($i.e., $f(\Omega)\sub \Omega)$,
and such that
$W_a^\obs =\bigcup_{n=0}^\infty f^{-n}(\Omega)$.
\end{thm}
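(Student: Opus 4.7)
The plan has three stages: construct a local $a$-stable manifold $\Omega$ near $p$; identify $W_a^{\obs}(f,p)$ with $\bigcup_{n\geq 0} f^{-n}(\Omega)$; and endow the union with an analytic manifold structure, verifying (a)--(c) plus uniqueness. The ``moreover'' statement will then follow directly from the construction, with $\Omega$ (possibly shrunk) serving as the invariant neighbourhood.

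To construct $\Omega$, I would work in a chart around $p$ identified with $0\in E$ and split $E=E_{a,\obs}\oplus E_{a,\obu}$ via $a$-hyperbolicity of $\alpha:=T_p(f)$, using the adapted norm from Definition~\ref{defahyp}. Writing $f(x)=\alpha(x)+R(x)$ with $R(0)=0$ and $dR(0)=0$, one seeks $\Omega$ as the graph of an analytic map $h\colon B_r\cap E_{a,\obs}\to E_{a,\obu}$ with $h(0)=0$ and $dh(0)=0$. Rather than iterate the graph transform directly, I would use a Perron-type approach: parametrise the putative local stable manifold by its forward orbits, working in a weighted Banach space of sequences $(x_n)_{n\geq 0}$ in $E$ with $a^{-n}\|x_n\|\to 0$, and reformulate the orbit condition $x_{n+1}=f(x_n)$ as a fixed-point equation in this sequence space. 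The hyperbolicity conditions (b) and (c) of Definition~\ref{defahyp} make the associated linear operator invertible, while smallness of $R$ on a small ball makes the nonlinear correction a contraction in the ultrametric operator norm. The unique fixed point, parametrised by its projection to $E_{a,\obs}$, gives the analytic $h$ and hence $\Omega$. This is the content I expect in Sections~5 and~6 of the paper.

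For stage two, $\Omega\subseteq W_a^{\obs}$ is immediate: on $\Omega$ the fixed-point estimate yields $\|f^n(x)\|\leq Cr^n$ with some $r<a$, so $a^{-n}\|f^n(x)\|\to 0$. Conversely, if $x\in W_a^{\obs}$ then $f^N(x)$ lies in an arbitrarily small neighbourhood of $p$ for $N$ large; uniqueness within the fixed-point argument says that in that small neighbourhood, $\Omega$ is precisely the set of points whose forward orbit has the required decay, so $f^N(x)\in \Omega$ and $x\in f^{-N}(\Omega)$. For stage three, each $f^{-n}(\Omega)$ inherits from $\Omega$ an analytic immersed-submanifold structure via the analytic diffeomorphism $f^{-n}\colon M\to M$; since $f(\Omega)\subseteq\Omega$, we have $\Omega\subseteq f^{-1}(\Omega)\subseteq f^{-2}(\Omega)\subseteq\cdots$, with transition maps factoring through $f^{\pm k}$, hence compatible. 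Equip $W_a^{\obs}$ with the resulting direct-limit analytic structure. Then (a) holds because each $f^{-n}(\Omega)\hookrightarrow M$ is an analytic immersion; (b) holds because $T_p(\Omega)=E_{a,\obs}$ by construction of the graph; (c) is built in. Uniqueness follows because any structure satisfying (a)--(c) must, in a neighbourhood of $p$, present itself as a graph over $E_{a,\obs}$ tangent to $E_{a,\obs}$ on which $f$ contracts with rate $<a$, and is therefore forced to agree with $\Omega$ near $p$ by the uniqueness in stage one; invariance under $f$ (and $f^{-1}$) then propagates the agreement to all of $W_a^{\obs}$.

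The main obstacle is stage one carried out with \emph{analytic} (not merely $C^k$) maps. One must set up a Banach space of analytic maps, or of appropriately weighted sequences of orbit points, whose norm behaves well under composition with $f$; in particular, one must ensure that the contraction constant drops strictly below $1$ in the relevant operator norm, uniformly on a fixed ball of positive radius. The ultrametric triangle inequality is a help rather than a hindrance, but securing analyticity of the fixed point (and not just its uniform or $C^k$ regularity) requires tight estimates on the convergence radii coming out of the iteration, which is exactly the technical job of Section~5's machinery on mappings between sequence spaces.
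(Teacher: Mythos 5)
Your proposal is correct and follows essentially the same route as the paper: an Irwin-type construction of the local $a$-stable manifold as a graph, obtained by solving for forward orbits in the weighted sequence spaces $\cS_a(E)$ via the ultrametric inverse function theorem, then $W_a^\obs=\bigcup_{n\ge 0}f^{-n}(\Omega)$ with the direct-limit analytic structure, and uniqueness by local uniqueness of the germ propagated by $f$-invariance. This matches Sections 5--7 of the paper.
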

In case of $1$-hyperbolicity, one simply speaks of hyperbolicity.
Moreover, $W_1^\obs$ is simply called the \emph{stable manifold}
around~$p$, and denoted~$W^\obs$.\\[2.5mm]
To obtain so-called \emph{centre-stable manifolds} and \emph{centre manifolds}
around a given fixed point $p$, again we need to impose
appropriate conditions on $T_p(f)$.
To formulate these, let
$E$ be an ultrametric Banach space over $\K$.
Moreover, let $\alpha\colon E\to E$
be a continuous linear map,
and $a\in \;]0,\infty[$.
%
%
\begin{defn}\label{defcsub}
An $\alpha$-invariant vector subspace
$E_{a,\obcs} \sub E$ is called an \emph{$a$-centre-stable
subspace} with respect to $\alpha$
if there exists an $\alpha$-invariant
vector subspace $E_{a,\obu}$ of $E$ such that
$E=E_{a,\obcs} \oplus E_{a,\obu}$ and
$\alpha_2:= \alpha|_{E_{a,\obu}}\colon E_{a,\obu}\to E_{a,\obu}$
is invertible,
and there exists an ultrametric norm $\|.\|$ on $E$ defining its topology,
with the following properties:
\begin{itemize}
\item[(a)]
$\|x+y\|=\max\{\|x\|,\|y\|\}$
for all $x\in E_{a,\obcs}$, $y\in E_{a,\obu}$; and
\item[(b)]
$\|\alpha_1\|\leq a$ and $\frac{1}{\|\alpha_2^{-1}\|}>a$
holds for the operator norms with respect to $\|.\|$,
where $\alpha_1:=\alpha|_{E_{a,\obcs}}$.
\end{itemize}
Then $E_{a,\obcs}$ is uniquely determined (see Remark~\ref{unicst}).
\end{defn}
%
\begin{defn}\label{defacentre}
If $\alpha$ is an automorphism,
we say that an $\alpha$-invariant vector subspace
$E_{a,\obc} \sub E$ is an \emph{$a$-centre
subspace} with respect to $\alpha$
if there exist $\alpha$-invariant
vector subspaces $E_{a,\obs}$ and $E_{a,\obu}$ of $E$ such that
$E=E_{a,\obs}\oplus E_{a,\obc} \oplus E_{a,\obu}$,
and an ultrametric norm $\|.\|$ on $E$ defining its topology,
with the following properties:
\begin{itemize}
\item[(a)]
$\|x+y+z \|=\max\{\|x\|,\|y\|,\|z\|\}$
for all $x\in E_{a,\obs}$, $y\in E_{a,\obc}$  and $z\in E_{a,\obu}$;
\item[(b)]
$\|\alpha(x)\|=a\|x\|$ for all $x\in E_{a,\obc}$; and
\item[(c)]
$\|\alpha_1\|<a$ and $\frac{1}{\|\alpha_3^{-1}\|}>a$ hold for
the operator norms with respect
to $\|.\|$, where $\alpha_1:=\alpha|_{E_{a,s}}$ and $\alpha_3:=\alpha|_{E_{a,\obu}}$.
\end{itemize}
Then $E_{a,\obs}$, $E_{a,\obc}$ and
$E_{a,\obu}$ are uniquely determined (Remark~\ref{udecen});
$E_{a,\obs}$ and $E_{a,\obu}$ are
called the \emph{$a$-stable}
and \emph{$a$-unstable} subspaces of $E$ with respect to~$\alpha$, respectively
(and likewise in Definition \ref{defahyp}).
If $a=1$, we simply speak of stable, centre and unstable
subspaces, and write~$E_\obs$, $E_\obc$ and
$E_{\obu}$ instead of
$E_{1,\obs}$, $E_{1,\obc}$ and~$E_{1,\obu}$.
\end{defn}
If $E$ is finite-dimensional,
then an $a$-centre-stable subspace with respect to
a linear map $\alpha\colon E\to E$
always exists, for any $a\in \;]0,\infty[$.
An $a$-centre subspace exists if
$E$ is finite-dimensional and $\alpha$ an automorphism
(see Section~\ref{secfin}).
\begin{numba}\label{thesituintr}
Let $M$
be an analytic manifold modelled on an ultrametric Banach space
over a complete ultrametric field~$\K$.
Let $M_0\sub M$ be open,
$f\colon M_0 \to M$ be an analytic mapping,
$p\in M_0$ be a fixed point of $f$,
and $a\in \;]0,1]$.\vspace{-2mm}
\end{numba}
%
\begin{defn}\label{defcsta}
If $T_p(M)$ has an $a$-centre-stable subspace $T_p(M)_{a,\obcs}$ with
respect to $T_p(f)$,
we call an immersed submanifold $N\sub M_0$
an \emph{$a$-centre-stable manifold} around~$p$ with respect to~$f$
if (a)--(d) are satisfied:
\begin{itemize}
\item[(a)]
$p\in N$;
\item[(b)]
$N$ is tangent to
$T_p(M)_{a,\obcs}$ at $p$, i.e.,
$T_p(N)=T_p(M)_{a,\obcs}$;
\item[\rm(c)]
$f(N)\sub N$; and
\item[\rm(d)]
$f|_N\colon N\to N$ is analytic.
\end{itemize}
If $a=1$, we simply speak of a \emph{centre-stable manifold}.
\end{defn}
%
\begin{defn}\label{defcenma}
If $T_p(f)$ is an automorphism
and $T_p(M)$ has a centre subspace $T_p(M)_\obc$
with respect to $T_p(f)$,
we say that an immersed submanifold $N\sub M_0$
is a \emph{centre manifold} around $p$ with respect to $f$
if (a), (c) and (d) fom Definition \ref{defcsta} hold as well as
\begin{itemize}
\item[(b)$'$]
$N$ is tangent to
$T_p(M)_\obc$ at~$p$, i.e.,
$T_p(N)=T_p(M)_\obc$.
\end{itemize}
\end{defn}
Given a manifold $M$, $p\in M$
and immersed submanifolds
$N_1,N_2\sub M$\linebreak
containing~$p$,
let us write $N_1\sim_p N_2$
if there exists an open neighbourhood $U$
of~$p$ in~$N_1$
which is also an open neighbourhood of~$p$ in~$N_2$,
and on which $N_1$ and $N_2$ induce the same analytic
manifold structure. The $\sim_p$-equivalence class
of an immersed submanifold $N\sub M$ is called
its \emph{germ} at $p$.\\[2.5mm]
The following result is obtained in Section \ref{seccs}:
%
\begin{thm}[Ultrametric Centre-Stable Manifold Theorem]\label{csthm}
\hspace*{2mm}Let\linebreak
$a\in \;]0,1]$
and assume that $T_p(M)$ admits an $a$-centre-stable subspace with\linebreak
respect to $T_p(f)$,
in the situation of {\rm\ref{thesituintr}}.
Then the following holds:
\begin{itemize}
\item[\rm(a)]
There exists an $a$-centre-stable manifold
$N$ around $p$ with
respect to $f$;
\item[\rm(b)]
The germ of $N$ at $p$ is uniquely determined;
\item[\rm(c)]
Every neighbourhood of~$p$ in~$N$ contains
an open neighbourhood~$\Omega$ of~$p$ in~$N$
which is an $a$-centre-stable
manifold and
a submanifold of~$M$.
\end{itemize}
\end{thm}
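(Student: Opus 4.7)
\medskip
\noindent\textbf{Proof plan.} The plan is to localize the problem in the model Banach space, construct the $a$-centre-stable manifold as the graph of an analytic map from the centre-stable subspace into the unstable subspace, and then deduce germ uniqueness from uniqueness of the graph. The real work, existence and uniqueness of the graph, is delegated to Proposition~\ref{thmloccsbb}.

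First I would localize. Pick a chart $\kappa\colon U\to V\sub E$ of $M_0$ around $p$ with $\kappa(p)=0$, and set $\wt{f}:=\kappa\circ f\circ\kappa^{-1}$ on a neighbourhood of $0$. Then $\wt{f}(0)=0$ and $\alpha:=T_0(\wt{f})$ corresponds to $T_p(f)$, so $E$ inherits an $a$-centre-stable decomposition $E=E_{a,\obcs}\oplus E_{a,\obu}$ together with an adapted ultrametric norm satisfying the maximum property of Definition~\ref{defcsub}(a). Writing $(x,y)$ for elements of $E$ with $x\in E_{a,\obcs}$ and $y\in E_{a,\obu}$, we may express
\begin{equation*}
\wt{f}(x,y)=(\alpha_1 x+g_1(x,y),\;\alpha_2 y+g_2(x,y)),
\end{equation*}
with $g_1,g_2$ analytic near $0$ and vanishing to second order there.

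To produce an $a$-centre-stable manifold in graph form, I would seek an analytic $\phi\colon B\to E_{a,\obu}$ on a small ball $B\sub E_{a,\obcs}$ with $\phi(0)=0$ and $D\phi(0)=0$, whose graph $\Gamma_\phi$ is $\wt{f}$-invariant. Invariance amounts to the functional equation
\begin{equation*}
\phi(\alpha_1 x+g_1(x,\phi(x)))=\alpha_2\phi(x)+g_2(x,\phi(x)),
\end{equation*}
and since $\alpha_2$ is invertible with $\|\alpha_1\|\le a<1/\|\alpha_2^{-1}\|$, this can be recast as a fixed-point problem for $\phi$; this is precisely what Proposition~\ref{thmloccsbb} provides. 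Transporting $\Gamma_\phi$ back via $\kappa^{-1}$ yields a submanifold $N\sub M_0$ through $p$ with $T_p(N)=T_p(M)_{a,\obcs}$, $f(N)\sub N$, and $f|_N$ analytic, giving (a); shrinking the radius of $B$ and pulling back then supplies the open neighbourhoods $\Omega$ demanded by (c). For (b), if $N_1,N_2$ are two $a$-centre-stable manifolds around $p$, then each is tangent to $E_{a,\obcs}$ at $p$ and hence, locally near $p$, is the graph of a unique analytic $\phi_i$ from a $0$-neighbourhood in $E_{a,\obcs}$ into $E_{a,\obu}$ with $\phi_i(0)=0$ and $D\phi_i(0)=0$. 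The $f$-invariance forces $\phi_1,\phi_2$ to satisfy the same functional equation, so the uniqueness clause of Proposition~\ref{thmloccsbb} yields $\phi_1=\phi_2$ near $0$ and thus $N_1\sim_p N_2$.

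The main obstacle lies entirely in Proposition~\ref{thmloccsbb}: on the centre-stable factor one only has $\|\alpha_1\|\le a$ rather than a strict inequality, so the straightforward contractive graph-transform argument that works for the purely $a$-hyperbolic stable manifold (Theorem~\ref{ultrastabm}) does not close directly. The resolution must exploit the ultrametric estimates together with the strict spectral gap $1/\|\alpha_2^{-1}\|>a$ on the unstable side, setting up the iteration on a space of analytic maps (or of their Taylor coefficient sequences, cf.\ the role of Section~\ref{secseq}) with a suitably weighted norm in which the relevant operator is genuinely contracting; convergence then simultaneously determines $\phi$ and all its Taylor coefficients, so that existence and uniqueness both fall out.
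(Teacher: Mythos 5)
Your reduction is essentially the paper's: localize in a chart with $\kappa(p)=0$, split $E=E_{a,\obcs}\oplus E_{a,\obu}$ with the adapted ultrametric norm, obtain the invariant graph of an analytic $\phi$ with $\phi(0)=0$, $\phi'(0)=0$ from the local graph proposition, pull back to get $N$ for (a), shrink for (c), and get (b) from uniqueness of the graph. Two corrections are needed, though. First, the proposition you delegate to must be Proposition~\ref{thmloccs}, not Proposition~\ref{thmloccsbb}: the latter is the three-block statement for the splitting $E_{\obs}\oplus E_{\obc}\oplus E_{\obu}$ used for centre manifolds (Theorem~\ref{cthm}) and assumes an isometric centre block, so it does not cover the centre-stable situation; the functional equation $f_2(x,\phi(x))=\phi(f_1(x,\phi(x)))$ that you write is exactly the one treated in Proposition~\ref{thmloccs}. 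Second, your sketch of how that proposition is proved is not what the paper does: there is no weighted-norm contraction on a space of analytic maps, and the sequence spaces of Section~\ref{secseq} enter only in Irwin's construction of local $a$-stable manifolds (Section~\ref{secirw}); instead, the invariance identity is read as an identity of formal power series and the homogeneous components $c_n$ of $\phi$ are determined recursively, the order-$n$ equation $(B_*-A^*)(c_n)=s_n-r_n$ being solved via $B_*(\id-C_*A^*)$, which is invertible because $\|C_*A^*\|\leq\|C\|\,\|A\|^2<1$ (Lemma~\ref{pbpfw} and \ref{lininv}), with the ultrametric bounds $\|c_n\|<a^{1-n}$ giving strict convergence on $B^{E_1}_{ar}(0)$. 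Finally, in (b) the appeal to the uniqueness clause requires putting a competing invariant graph $\psi$ into the normalized form of Proposition~\ref{thmloccs}: one first shrinks $N_1$ (justified by (c)) so that it is a submanifold of the chart domain with $\Lip(\psi)\leq 1$ and the invariance $g(\Theta_s)\subseteq\Theta_{as}$ holds on a full ball, and then conjugates by a homothety so that $\psi$ is given globally by a power series with the required coefficient bounds; only then does the uniqueness statement (which concerns such normalized series solutions, and uses that $B$ is invertible) yield $\phi=\psi$ near $0$. These are the steps your sketch glosses over, but they are exactly how the paper completes the argument, so the route is the same.
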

As concerns centre manifolds, we show (see Section \ref{secc}):
%
%
\begin{thm}[Ultrametric\hspace*{-.7mm} Centre\hspace*{-.7mm} Manifold\hspace*{-.7mm} Theorem]\label{cthm}
\hspace*{-1.4mm}In\hspace*{-.7mm} the\hspace*{-.7mm} setting\linebreak
of {\rm\ref{thesituintr}},
assume that $T_p(f)$ is an automorphism
and assume that $T_p(M)$ has a centre subspace with respect
to $T_p(f)$. Then
\begin{itemize}
\item[\rm(a)]
There exists a centre manifold $N$ around $p$ with respect to $f$;
\item[\rm(b)]
The germ of $N$ at $p$ is uniquely determined;
\item[\rm(c)]
Each neighbourhood of $p$ in $N$
contains an open neighbourhood~$\Omega$ of~$p$ in~$N$
which is a centre manifold,
a submanifold of~$M$,
stable under $f$ $($i.e., $f(\Omega)=\Omega)$,
and for which $f|_\Omega\colon \Omega \to \Omega$ is
a diffeomorphism.
\end{itemize}
\end{thm}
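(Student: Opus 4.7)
Write the centre-subspace decomposition as $T_p(M) = E_\obs\oplus E_\obc\oplus E_\obu$ with $T_p(f)=\alpha_1\oplus\alpha_2\oplus\alpha_3$. The strategy is a nested application of Theorem~\ref{csthm}: first to $f$ to eliminate the unstable direction $E_\obu$, and then to the local inverse of $f$ restricted to the resulting centre-stable manifold, to eliminate the stable direction $E_\obs$. Concretely, $E_\obs\oplus E_\obc$ is a $1$-centre-stable subspace for $T_p(f)$ in the sense of Definition~\ref{defcsub} when equipped with the norm of Definition~\ref{defacentre}: the orthogonal-sum form of the norm restricts; $\alpha_3$ is invertible; $\|\alpha_1\oplus\alpha_2\|\le 1$ since $\|\alpha_1\|<1$ and $\alpha_2$ is isometric; and $1/\|\alpha_3^{-1}\|>1$. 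Theorem~\ref{csthm}(a),(c) then supplies an $f$-invariant open submanifold $\Omega^\obcs$ of $M$ through $p$ tangent to $E_\obs\oplus E_\obc$. Since $T_p(f|_{\Omega^\obcs}) = \alpha_1\oplus\alpha_2$ is invertible, the ultrametric inverse function theorem lets me shrink $\Omega^\obcs$ so that $f$ restricts to an analytic diffeomorphism of $\Omega^\obcs$ onto the open subset $f(\Omega^\obcs)\subseteq\Omega^\obcs$; let $g$ denote its analytic inverse.

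Now I apply Theorem~\ref{csthm} a second time, to $g$ on $f(\Omega^\obcs)$, viewed as an analytic map on an open subset of the manifold $\Omega^\obcs$ (modelled on $E_\obs\oplus E_\obc$). Within $T_p(\Omega^\obcs)=E_\obs\oplus E_\obc$, the subspace $E_\obc$ is a $1$-centre-stable subspace for $T_p(g)=\alpha_1^{-1}\oplus\alpha_2^{-1}$, with complement $E_\obs$ on which $\alpha_1^{-1}$ is invertible with $1/\|(\alpha_1^{-1})^{-1}\|=1/\|\alpha_1\|>1$, while $\|\alpha_2^{-1}\|=1$; the orthogonal-sum norm identity of Definition~\ref{defcsub}(a) is inherited from Definition~\ref{defacentre}. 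Theorem~\ref{csthm} thus produces an immersed submanifold $N\subseteq\Omega^\obcs$ through $p$ tangent to $E_\obc$, a $g$-invariant open representative $\Omega_0\subseteq N$ which is a submanifold of $M$, and germ uniqueness of $N$ as a centre-stable manifold of $g$.

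It remains to promote $g$-invariance to genuine $f$-invariance and to derive parts (a)--(c). In a chart of $\Omega_0$ sending $p$ to $0\in E_\obc$, $g$ takes the form $x\mapsto \alpha_2^{-1}x + R(x)$ with $R(x)=O(\|x\|^2)$, where $\alpha_2^{-1}$ is an isometric bijection of $E_\obc$; by the ultrametric triangle inequality, $\|g(x)\|=\|x\|$ for all sufficiently small $x$. Hence $g$ restricts to a bijective isometric analytic self-map of every sufficiently small chart-ball $\Omega\ni p$, so $f|_\Omega$ is an analytic diffeomorphism of $\Omega$ onto itself. This $\Omega$ verifies (a), and the same chart-ball construction carried out on an arbitrary neighborhood of $p$ in any candidate centre manifold (using that $\alpha_2=T_p(f|_N)$ is already an isometric bijection) yields the full strength of (c). For the germ uniqueness (b), any centre manifold is locally, in a chart of $M$, the graph of an analytic map $\phi\colon V\subseteq E_\obc\to E_\obs\oplus E_\obu$ with $\phi(0)=0$, $D\phi(0)=0$; the $f$-invariance condition becomes a functional equation for $\phi$, which by the contracting behaviour of $\alpha_1$ and expanding behaviour of $\alpha_3$ reduces to a contraction mapping problem solvable by a Banach fixed-point argument, forcing the germ of $\phi$ to be unique. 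The main obstacle is precisely the conversion of $g$-invariance into $f$-invariance: it rests on the distinctive ultrametric identity $\|g(x)\|=\|x\|$ for small $x$, which turns an analytic map with isometric linear part into a genuine isometry on small balls — a rigidity unavailable over $\R$ that compensates for the fact that Theorem~\ref{csthm}, applied to $g$, only produces a $g$-invariant manifold.
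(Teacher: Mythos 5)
Your plan is correct in outline, but it takes a genuinely different route from the paper in two of the three parts. For existence, the paper stays in the ambient manifold: after shrinking $M_0$ so that $f$ is an analytic diffeomorphism onto $M_1=f(M_0)$, it applies Theorem~\ref{csthm} once to $f$ (with centre-stable subspace $E_\obs\oplus E_\obc$) and once to $f^{-1}$ (with centre-stable subspace $E_\obc\oplus E_\obu$), and obtains the centre manifold as the transversal intersection $N_1\cap N_2$, which is then upgraded exactly as you do via the isometry rigidity of Remark~\ref{behaveba}\,(c). Your nested variant---Theorem~\ref{csthm} applied to $f$, then to the local inverse $g$ of $f|_{\Omega^{\obcs}}$ on the manifold $\Omega^{\obcs}$ modelled on $E_\obs\oplus E_\obc$---is legitimate: your check that $E_\obc$ is a $1$-centre-stable subspace for $T_p(g)=\alpha_1^{-1}\oplus\alpha_2^{-1}$ is correct, and your promotion of $g$-invariance to $f(\Omega)=\Omega$ is the same ball-rigidity mechanism the paper uses for part~(c). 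You avoid the transversality/intersection bookkeeping; the paper's intersection argument, on the other hand, re-uses part~(c) verbatim and (as the author remarks) transfers to the $C^k$ setting, and its alternative existence proof via Proposition~\ref{thmloccsbb} also yields quantitative extras ($f(\Gamma_s)=\Gamma_s$ for all $s$, coefficient bounds) used elsewhere.

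The larger divergence is in (b). The paper reduces germ uniqueness to the fact that a centre manifold is locally the graph of a \emph{unique} power series (Proposition~\ref{thmloccsbb}, proved in Appendix~\ref{sec1app} by recursively determining the Taylor coefficients from the invariance identity), mimicking the proof of Theorem~\ref{csthm}\,(b). Your metric argument can be made to work, but not literally as a ``Banach fixed-point'' statement: there is no single contraction operator of which both candidate graphs are fixed points, since the induced dynamics on the base ball $B_s^{E_\obc}(0)$ depends on the unknown graph. What does work is a direct estimate of $\sup\|\phi-\psi\|$ over a common small ball: first normalise both candidates (using the intrinsic chart and the isometry argument, as in (c)) so that each is an invariant Lipschitz-$1$ graph over the \emph{same} ball $B_s^{E_\obc}(0)$; then iterate forward for the $E_\obs$-component (factor $\|\alpha_1\|<1$) and solve backwards for the $E_\obu$-component (factor $\|\alpha_3^{-1}\|<1$), crucially using that each induced base map is a bijection of $B_s^{E_\obc}(0)$ onto itself by Theorem~\ref{IFT}\,(b), so that the stable estimate reaches every base point. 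Supplying these details is the main gap between your sketch and a complete proof; once filled, your route is arguably more elementary than the coefficient recursion, but as written the uniqueness step is only asserted, and it is exactly the step that fails over $\R$, so it deserves the explicit ultrametric justification.
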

It is essential for the uniqueness
assertions in part~(b) of Theorem~\ref{csthm} and~\ref{cthm}
that all manifolds (and submanifolds) we consider are analytic manifolds.\\[2.5mm]
Local $a$-unstable manifolds
can also be discussed, for $a\geq 1$ (see Section \ref{secirw3}).
In  Sections \ref{seccon} to \ref{adepend},
we describe general
consequences of our results,
and in
Section \ref{seclie}
we draw more specific conclusions concerning Lie groups.
Results from these sections are
vitally used in~\cite{MaZ} and~\cite{SPO},
to obtain information on
automorphisms of finite-dimensional
Lie groups over local fields of positive characteristic.
To explain the motivation for the current article,
and to show the utility of its results,
we now briefly describe two applications which are
only available through the use of invariant manifolds.\\[3mm]
{\bf Applications in Lie theory.}
If $G$ is a totally disconnected, locally compact topological group
with neutral element $1$ 
and $\alpha\colon G\to G$ an automorphism of topological groups,
then
\[
U_\alpha:=\{x\in G\colon \mbox{$\alpha^n(x)\to 1$ as $n\to\infty$}\}
\]
is called the \emph{contraction group} of $\alpha$ and
\[
M_\alpha:=\{x\in G\colon \mbox{$\alpha^\Z(x)$ is relatively compact in $G$} \}
\]
the \emph{Levi factor},
where $\alpha^\Z(x):=\{\alpha^n(x)\colon n\in \Z\}$ (see \cite{BaW}).
Now assume that $G$
is an analytic finite-dimensional Lie group over a local field $\K$
and $\alpha\colon G\to G$ an analytic automorphism.
Since $\alpha(1)=1$, we are in the situation of the current article.
Using invariant manifolds,
one can prove the following results
in arbitrary characteristic
(the $p$-adic case of which is due to Wang \cite{Wan})\footnote{Our results
also enable the calculation of the
``scale'' $s(\alpha)$ (introduced in \cite{Wi1}, \cite{Wi2})
if $U_\alpha$ is closed~\cite{SPO},
and to discuss Lie groups of type $R$
over local fields of arbitrary~characteristic.\linebreak
Previously, this was only possible
in the $p$-adic case (see \cite{SCA} and \cite{Raj},
respectively.~Compare also \cite{BaW}
for the scale of inner automorphisms
of reductive algebraic groups).}:
\begin{itemize}
\item[(a)]
\emph{The group $U_\alpha$ is always nilpotent}
(see \cite[Theorem B]{MaZ}).
\item[(b)]
\emph{If $U_\alpha$ is closed, then $U_\alpha$, $U_{\alpha^{-1}}$
and $M_\alpha$ are Lie subgroups of $G$.
Moreover, $U_\alpha M_\alpha U_{\alpha^{-1}}
$ is an open subset
of $G$ and the ``product map''}
\[
\pi\colon U_\alpha\times M_\alpha \times U_{\alpha^{-1}}\to
U_\alpha M_\alpha U_{\alpha^{-1}}\,,\quad (x,y,z)\mto xyz
\]
\emph{is an analytic diffeomorphism}
(see \cite{SPO}; cf.\ also the sketch in
\cite{SUR}).
\end{itemize}
In fact, the $a_j$-stable manifolds $G_j:=W_{a_j}^\obs(\alpha,1)$
provide a central series
$\{1\}=G_1\sub G_2\sub\cdots \sub G_n=G$
of Lie subgroups of $G$,
for suitable real numbers
$0<a_1<\cdots < a_n<1$ (see \cite{MaZ}).
And to get (b),
one heavily uses
the (stable) manifold structures on
$U_\alpha=W^\obs(\alpha,1)$ and
$U_{\alpha^{-1}}=W^\obs(\alpha^{-1},1)$ constructed here,
and the fact that $M_\alpha$ contains a centre manifold
for $\alpha$ around $1$ (see \cite{SPO}; cf.\ also \cite{SUR}).\\[3mm]
{\bf Methods.}
Using a local chart around $p$,
the constructions of
$a$-centre-stable manifolds
and (local) $a$-stable manifolds
are easily reduced to the case where $M=E$ is an ultrametric
Banach space and $f$ is an analytic $E$-valued
map on an open ball $B^E_r(0)\sub E$,
such that $f(0)=0$.
Write $E=E_1\oplus E_2$\linebreak
and (accordingly) $f=(f_1,f_2)$,
where $E_1$ is the $a$-centre stable (resp., the $a$-stable)
subspace of $E$ and $E_2$ the $a$-unstable subspace.
Now the idea is to construct an $a$-centre-stable manifold
(resp., a local $a$-stable manifold)
as the graph $\Gamma$ of an analytic $E_2$-valued map~$\phi$
on some ball in $E_1$.\\[2.5mm]
\emph{Construction of $a$-centre-stable manifolds.}
In this case, the required $f$-invariance of $\Gamma$
necessitates that
\begin{equation}\label{formal}
f_2(x,\phi(x))=\phi(f_1(x,\phi(x)))
\end{equation}
for small $x\in E_1$. Writing now $f_1$, $f_2$ and $\phi$
as convergent series,  (\ref{formal}) can be read
as an identity for formal series, which enables us to determine
the coefficients of~$\phi$ recursively (see Section \ref{seccs}).\\[2.5mm]
\emph{Construction of $($local\hspace*{.3mm}$)$ $a$-stable manifolds.}
We construct local $a$-stable\linebreak
manifolds
by an adaptation of a method used by M.\,C. Irwin
in the real case \cite{Ir1} .
Instead of constructing the points $z=(x,\phi(x))$
of the local $a$-stable manifold directly,
the central idea of Irwin was to construct,
instead, their orbits $\omega(x):=(f^n(z))_{n\in \N_0}$.
These are elements of a suitable Banach space of sequences,
and satisfy a certain identity
\[
G(\omega(x))\;=\; \big((x,0),(0,0),\ldots\big)\,,
\]
which can be solved for $\omega(x)$
using the inverse function theorem for
$k$ times continuously Fr\'{e}chet differentiable
functions between Banach spaces
(cf.\ \cite{AaM}
for an extension of Irwin's method to real analytic dynamical systems).\\[2.5mm]
As an inverse function theorem is also available
for analytic maps between ultrametric Banach spaces,
we can adapt
Irwin's method to the ultrametric case (see Section \ref{secirw}
for the construction, and Section \ref{secseq}
for auxiliary results concerning sequence
spaces).
Our discussion also profited much from \cite{Wel}.\\[3mm]
{\bf General remarks.}
It should be mentioned that (of course!)
our primary interest lies in the finite-dimensional case.
However, Irwin's method forces us to consider
infinite-dimensional sequence spaces.
Moreover, the discussion of centre-stable (and centre) manifolds
actually becomes easier
if one uses
a coordinate-free formulation
(which avoids the use of multi-indices).\\[2.5mm]
We also wish to mention
that although locally analytic functions
are used as the basis of our studies,
the dynamical systems give rise to
various \emph{global} objects (not only to germs around
the fixed points).
Examples are the $a$-stable manifold $W_a^\obs(f,p)$
and the Levi factor~$M_\alpha$ in a Lie group~$G$,
which is a distinguished centre manifold
(if $U_\alpha$ is closed).\\[2.5mm]
{\bf Relations to the literature.}
As just explained, our methods and results have their
roots in the theory of smooth dynamical systems over the
real ground field, and drew some inspiration
from classical sources
in this area (in the case of stable manifolds).\footnote{See
also \cite{HPS}, \cite{HaK} and the references therein.}
Complementary to our studies,
much of the literature on ultrametric dynamical systems
can rather be regarded as an offspring
of complex dynamics,
and has concentrated on the $1$-dimensional case
(see, e.g., \cite{Ben} and \cite{Bez}).
Some specific new phenomena arose there,
like the existence of wandering domains \cite{Bn2}.
It also turned out to be necessary in some situations to extend
the action of polynomials or rational functions
from the ordinary projective line
to the Berkovich projective line,
because the latter supports relevant measures
while the projective line does not,
in contrast to the classical complex case  \cite{FaR}
(further ultrametric
phenomena can be found in~\cite{AaK}
and \cite{Nil}. For relations to formal groups, see \cite{Lub}).\\[2.5mm]
While the preceding list could easily be prolonged,
papers devoted to multi-dimensional non-archimedean
dynamical systems are quite rare.
Notable exceptions
are the work of Herman and Yoccoz \cite{HaY}
on the analytic linearizability problem
in several variables (over ultrametric fields
of characteristic zero)
and the closely related recent Ph.D.-thesis
\cite{Vie} by D. Vieugue.\footnote{Concerning the single variable case,
compare also K.-O. Lindahl's thesis \cite{Lin}.}
Further works include \cite{Aga} and \cite{Ag2}.
We mention that if a (finite-dimensional)
ultrametric dynamical system is analytically conjugate
to a linear system (at least locally around a fixed point),
then it is very easy to obtain invariant manifolds
(as the images of the \mbox{corresponding} vector subspaces).
However, an analytic linearization is only possible in
special situations,
and hence the existing results are insufficient to
deal with the Lie-theoretic problems\linebreak
described above.
By contrast, the results we provide are quite general,
and apply just as well if a linearization is not available
(and in any characteristic).\\[2.5mm]
{\bf Perspectives.}
Having started on this road, it would be natural to\linebreak
proceed
and take further steps towards a non-archimedean
analogue of the theory of partially hyperbolic dynamical systems.
One essential point would be the study of
invariant foliations
(e.g., locally around a fixed point),
which would give
refined information on the dynamics.
Such extensions auto\-matically lead us outside
the class of analytic functions,
and necessitate the consideration
of functions with weaker regularity properties,
like functions which are only $C^k$,
Lipschitz, or H\"{o}lder
(cf.\ also \cite[p.\,133]{HaK}).\footnote{See \cite{DIF}, \cite{IMP}, \cite{CMP}
and \cite{FIO} for the basic theory of such functions.
For the single-variable case, consult
\cite{Sch} and the references therein.}
In fact, in the real case it is well-known
that smooth (or analytic) dynamical systems
may give rise to foliations which are
H\"{o}lder, but not $C^1$ (nor Lipschitz).\\[2.5mm]
From this point of view, it is very natural
to construct invariant manifolds also for $C^k$-dynamical
systems over ultrametric fields.\footnote{These
are also needed to adapt the above Lie-theoretic
results to non-analytic $C^k$-Lie groups
or non-analytic $C^k$-automorphisms,
as constructed in \cite{NOA}.}
These constructions (which are
more complicated than the analytic case)
are in preparation.
In a nutshell,
Irwin's method still provides $a$-stable manifolds
in the case of $C^k$-dynamical systems
modelled on an ultrametric Banach space,\footnote{With some
precautions if $k=1$ and the modelling space is infinite-dimensional.}
using the ultrametric inverse function theorem
for $C^k$-maps provided in \cite{FIO} (cf.\ \cite{IMP} for weaker results).
In the $C^k$-case,
centre-stable manifolds (for finite $k$) are constructed
as $a$-pseudo-stable manifolds
with $a>1$ close to $1$.
The latter are available through an ultrametric analogue
of Irwin's method from \cite{Ir2} (cf.\ also \cite{LaW}).\\[2.5mm]
Let us remark in closing that part of the theory becomes
nicer and easier if the real field is replaced by an ultrametric field.
For example, the
astute
reader may have noticed
that part~(c) in Theorem \ref{csthm}
and \ref{cthm} (and also Definition~\ref{defacentre}\,(b))
would be too much to ask for
in the real case.
However, some other aspects become
more complicated in the non-archimedean setting
(for example, the discussion $C^k$-dynamical systems).\vfill\pagebreak

\noindent
{\bf Acknowledgements.}
The article is an elaboration of a mini-course
on\linebreak
ultrametric invariant manifolds given by the author
at the CIRM (Luminy),
as
part of the \emph{Session th\'{e}matique autour
de la dynamique non-archim\'{e}dienne} in July 2008.
The author appreciated the invitation
and support.
A remark
by Jean-Yves Briend
inspired the treatment of $a$-centre-stable manifolds
for $a\in \;]0,1]$ (originally, the author assumed \mbox{$a=1$).}
Juan Rivera-Letelier
gave valuable advice
on the importance
of weakened regularity
properties, and widened the author's horizon of research perspectives.
The studies were started during a research visit to the University
of Newcastle (N.S.W.) in 2004,
supported by the German Research Foundation
(DFG,
project 447 \mbox{AUS-113/22/0-1})
and the Australian Research Council (project LX 0349209).
The manuscript was completed during a visit
to Newcastle in 2008,
supported by the DFG (project GL 357/6-1)
and ARC (project DP0556017).
\section{Preliminaries and notation}\label{secprepa}
In this section, we fix notation
and recall some basic (but essential) facts concerning
analytic functions between open subsets of ultrametric
Banach spaces.
First of all, let us mention that
$\N:=\{1,2,\ldots\}$ and $\N_0:=\N\cup\{0\}$
in this article. We write
$\Z$
for the integers
and $\R$ for the field of real numbers.
If $f\colon M\to M$ and $n\in \N$,
we write $f^n:=f\circ\, \cdots\, \circ f$
for the $n$-fold composition,
and $f^0:=\id_M$. If $f$ is invertible,
we define $f^{-n}:=(f^{-1})^n$.\\[2.5mm]
{\bf Ultrametric Banach spaces.}
Recall that an \emph{ultrametric field}
is a field $\K$, together with
an absolute value $|.|\colon \K\to [0,\infty[$
which satisfies the ultrametric inequality.
We shall always assume that the metric $d\colon \K \times \K \to [0,\infty[$,
$d(x,y):=|x-y|$, defines a non-discrete
topology on $\K$. If the metric space
$(\K,d)$ is complete,
then the ultrametric field $(\K,d)$ is called \emph{complete}.
A totally disconnected, locally compact, non-discrete
topological field is called a \emph{local field}.
Any such admits an ultrametric absolute value
making it a complete ultrametric field \cite{Wei}.
See, e.g., \cite{Sch} for background concerning
complete ultrametric fields.\\[2.5mm]
An \emph{ultrametric Banach space} over an ultrametric field
$\K$ is a complete normed space $(E,\|.\|)$ over $\K$
whose norm $\|.\| \colon E\to [0,\infty[$ satisfies the \emph{ultrametric inequality},
$\|x+y\|\leq \max\{\|x\|, \|y\|\}$ for all $x,y\in E$.
The ultrametric inequality entails
the following \emph{domination principle}:
%
\begin{equation}\label{domi}
\|x+ y\|= \|x\|\quad\mbox{for all $x,y\in E$ such that $\|y\|<\|x\|$.}
\end{equation}
Given $x\in E$ and $r\in \;]0,\infty]$, we set
$B^E_r(x):=\{y\in E\colon \|y-x\|<r\}$.\\[2.5mm]
{\bf Linear operators.}
Given an ultrametric Banach space $E$,
we let $\cL(E)$ be the set of all continuous linear
self-maps of $E$. Then the operator norm
makes $\cL(E)$ an ultrametric Banach space,
and it is a unital $\K$-algebra under composition.
We write
\[
\GL(E):=\cL(E)^\times:=\{A\in \cL(E)\colon (\exists B\in \cL(E))\;\;
AB=BA=\id_E\}
\]
for its unit group.
%
\begin{numba}\label{lininv}
The domination principle entails that $\id_E-A$ is an
isometry for each $A\in \cL(E)$ of operator norm $\|A\|<1$.
Moreover, $\id_E-A$ is invertible, because it is easy to see that
the Neumann series
$\sum_{k=0}^\infty A^k$ provides an inverse for $\id_E-A$. Then
also $(\id_E-A)^{-1}$ is an isometry. In particular,
%
\begin{equation}\label{nonexpa}
\|(\id_E-A)^{-1}\|\leq 1\quad\mbox{for all $A\in \cL(E)$ such that $\|A\|<1$.}
\end{equation}
\end{numba}
See, e.g., \cite{Roo} for background concerning
ultrametric Banach spaces.\\[2.5mm]
{\bf Spaces of homogeneous polynomials.}
We now discuss continuous homogeneous polynomials
and analytic functions between ultrametric Banach spaces.
As we are only dealing with a special case
of the situation in \cite{Bo1} (our main reference),
simpler notation will be sufficient.\\[2.5mm]
Let $(E,\|.\|_E)$ and $(F,\|.\|_F)$ be ultrametric Banach spaces over
a complete ultrametric field $\K$.
If $k\in \N_0$, we let $\cL^k(E,F)$ be the set of all continuous
$k$-linear mappings $\beta\colon E^k\to F$.
Thus $\cL^0(E,F)\isom F$, and $\cL^k(E,F)$ for $k\geq 1$
is an ultrametric Banach space
with norm given by
\[
\|\beta\|:=\sup\left\{
\frac{\|\beta(x_1,\ldots, x_k)\|_F}{\|x_1\|_E\ldots\|x_k\|_E}\colon x_1\ldots, x_k\in E\setminus \{0\}\right\}
\in [0,\infty[\,.
\]
If $k\geq 1$, we write $\Delta_k$ (or $\Delta_k^E$) for
the diagonal map $E\to E^k$, $x\mto (x,\ldots, x)$.
If $k=0$, define $\Delta_0:=\Delta_0^E\colon E\to E^0=\{0\}$, $x\mto 0$.
A map $p\colon E\to F$ is called a \emph{continuous homogeneous polynomial
of degree $k$} if there exists $\beta\in \cL^k(E,F)$ such that
$p=\beta\circ \Delta_k$. We let $\Pol^k(E,F)$ be the space of all
continuous homogeneous polynomials $p\colon E\to F$ of degree $k$.
Then
\[
\cL^k(E,F)\to \Pol^k(E,F)\,,\quad \beta\mto \beta\circ \Delta_k
\]
is a surjective linear map. We equip $\Pol^k(E,F)$
with the quotient norm, which makes it an ultrametric Banach space.
Thus
\[
\|p\|=\inf\{\|\beta\|\colon \mbox{$\beta\in \cL^k(E,F)$ such that $p=\beta\circ\Delta_k$}\}\,.
\]
{\bf Pullbacks and pushforwards.}
If $E_1$, $E_2$ and $F$ are ultrametric Banach spaces and $A\colon E_1\to E_2$
is a continuous linear map, we obtain a linear map
\[
\Pol^k(A,F)\colon \Pol^k(E_2,F)\to \Pol^k(E_1,F)\,,\quad
p\mto p\circ A\,.
\]
Similarly,
if $E$, $F_1$ and $F_2$ are ultrametric Banach spaces and $B\colon F_1\to F_2$
is a continuous linear map, we obtain a linear map
\[
\Pol^k(E,B)\colon \Pol^k(E,F_1)\to \Pol^k(E,F_2)\,,\quad
p\mto B\circ p\,.
\]
%
\begin{la}\label{pbpfw}
The linear mappings $A^*:=\Pol^k(A,F)$ and $B_*:=\Pol^k(E,B)$
are continuous, of operator norm
\begin{eqnarray}
\|A^*\| & \leq & \|A\|^k\qquad\mbox{and}\label{eston1}\\
\|B_*\| &\leq & \|B\|\,.\label{eston2}
\end{eqnarray}
\end{la}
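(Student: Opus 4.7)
The plan is to exploit the defining presentation of a homogeneous polynomial as $p = \beta \circ \Delta_k$ for some continuous $k$-linear $\beta$, lift $A^*$ and $B_*$ to the multilinear level, and then descend to the quotient norm.

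For the pullback $A^*$, I would start by fixing $p \in \Pol^k(E_2,F)$ and choosing an arbitrary $\beta \in \cL^k(E_2,F)$ with $p = \beta \circ \Delta_k^{E_2}$. I then form the lifted map
\[
\tilde\beta := \beta \circ (A\times\cdots\times A) \colon E_1^k \to F,
\]
which is continuous and $k$-linear. A direct computation gives $\tilde\beta \circ \Delta_k^{E_1} = \beta\circ\Delta_k^{E_2}\circ A = p\circ A$, so $\tilde\beta$ is a $k$-linear representative for $A^*(p)$. The standard estimate for multilinear operator norms yields $\|\tilde\beta\|\le \|\beta\|\,\|A\|^k$. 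By definition of the quotient norm on $\Pol^k(E_1,F)$,
\[
\|A^*(p)\| \le \|\tilde\beta\| \le \|A\|^k\,\|\beta\|.
\]
Taking the infimum over all admissible $\beta$ gives $\|A^*(p)\|\le \|A\|^k\,\|p\|$, which is (\ref{eston1}).

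For the pushforward $B_*$, the argument is parallel but simpler: given $p \in \Pol^k(E,F_1)$ and a representative $\beta \in \cL^k(E,F_1)$ with $p = \beta\circ \Delta_k^E$, I set $\tilde\beta := B\circ \beta \in \cL^k(E,F_2)$. Then $\tilde\beta \circ \Delta_k^E = B\circ p = B_*(p)$, and $\|\tilde\beta\|\le \|B\|\,\|\beta\|$ is immediate from the definition of the operator norm on $\cL^k(E,F_1)$. Passing to the quotient norm and taking the infimum over $\beta$ gives $\|B_*(p)\|\le \|B\|\,\|p\|$, i.e. (\ref{eston2}). Linearity of $A^*$ and $B_*$ (already asserted in the statement) is clear from the definitions, so these norm estimates also establish continuity.

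There is essentially no obstacle here beyond care with the quotient norm: the key point is that lifting to the multilinear level is natural (both $p\mapsto p\circ A$ and $p\mapsto B\circ p$ descend from the corresponding operations on $\cL^k$), and the two bounds $\|\beta\circ(A\times\cdots\times A)\|\le\|A\|^k\|\beta\|$ and $\|B\circ\beta\|\le\|B\|\,\|\beta\|$ are standard and do not use the ultrametric inequality. Taking the infimum over representatives $\beta$ is legitimate because $\tilde\beta$ is constructed functorially from $\beta$, so every admissible $\beta$ on the source side produces an admissible lift on the target side.
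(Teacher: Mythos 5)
Your proposal is correct and follows essentially the same route as the paper: lift $p=\beta\circ\Delta_k$ to the multilinear level, use $\gamma:=\beta\circ(A\times\cdots\times A)$ (resp.\ $B\circ\beta$) as a representative of $A^*(p)$ (resp.\ $B_*(p)$), estimate its norm, and pass to the infimum over representatives to obtain the quotient-norm bounds.
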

\begin{proof}
Let $p\in \Pol^k(E_2,F)$. If $\beta\in \cL^k(E_2,F)$ such that $p=\beta\circ \Delta_k^{E_2}$,
then $A^*(p)=p\circ A=\beta\circ \Delta_k^{E_2}\circ A=\gamma\circ \Delta_k^{E_1}$,
where $\gamma :=\beta\circ (A\times\cdots\times A)$
and $\|\gamma\|\leq \|A\|^k \|\beta\|$.
Thus $\|A^*(p)\|\leq \|A\|^k \|\beta\|$
and hence  $\|A^*(p)\|\leq \|A\|^k \|p\|$ (passing to the infimum),
which entails (\ref{eston1}).
The proof of (\ref{eston2}) is similar.
\end{proof}
{\bf Analytic functions.}
Let $E$ and $F$ be ultrametric Banach spaces over a complete
ultrametric field.
Given $(p_k)_{k\in \N_0}\in \prod_{k\in \N_0} \Pol^k(E,F)$,
let $\rho$ be the supremum of the set of all $r\geq 0$
such that
\[
\lim_{k \to \infty } \|p_k\|r^k \; =\; 0\,.
\]
Then $\rho$ is called the \emph{radius of strict convergence}
of the series $\sum_{k\in\N_0}p_k$,
and $B_\rho^E(0)$ its \emph{domain of strict convergence}.\\[2.5mm]
A function $f\colon U\to F$ on an open subset $U\sub E$
is called (locally) \emph{analytic} if, for each $x\in U$,
there exist
$(p_k)_{k\in \N_0}\in \prod_{k\in \N_0} \Pol^k(E,F)$
such that the series $\sum_{k\in\N_0}p_k$
has a positive radius $\rho$ of strict convergence
and there exists $r \in \;]0,\rho]$ such that $B_r^E(x)\sub U$
and
\[
f(x+y)=\sum_{k=0}^\infty p_k(y)\quad \mbox{for all $\, y\in B_r^E(0)$.}
\]
We recall that if $B_\rho^E(0)$ is the domain of strict
convergence of $\sum_{k=0}^\infty p_k$ with $p_k\in \Pol^k(E,F)$
(and $\rho>0$), then the corresponding function
\[
f\colon B_\rho^E(0)\to F\,,\quad f(z):= \sum_{k=0}^\infty p_k(z)
\]
is analytic \cite[4.2.4]{Bo1}.
It is well-known that compositions of composable\linebreak
analytic functions are
analytic (see \cite[4.2.3 and 3.2.7]{Bo1}).
It is important that
quantitative information is available:\\[2.5mm]
Let $E$, $F$ and $H$
be ultrametric Banach spaces.
Assume that the series\linebreak
corresponding to
$(f_k)_{k\in \N_0}\in \prod_{k\in \N_0} \Pol^k(E,F)$
has radius of strict convergence $\rho_1>0$
and the series corresponding to
$(g_k)_{k\in \N_0}\in \prod_{k\in \N_0} \Pol^k(F,H)$
has radius of strict convergence $\rho_2>0$.
Let $f\colon B_{\rho_1}^E(0)\to F$ and
$g\colon B^{F}_{\rho_2}(0)
\to H$ be the corresponding analytic
functions. We assume that $f(0)\in B^{F}_{\rho_2}(0)$
and choose $r\in\; ]0,\rho_1]$ such that
\[
\sup\{\|f_k\| r^k\colon k\in \N\}\; \leq \; \rho_2\,.
\]
Then \cite[4.1.5]{Bo1} ensures:
%
\begin{la}\label{quanticomp}
There exists
$(h_k)_{k\in \N_0}\in \prod_{k\in \N_0} \Pol^k(E,H)$
such that $\sum_{k=0}^\infty h_k$ has radius of strict convergence
at least $r$, and such that
\[
g(f(z))=\sum_{k=0}^\infty h_k(z)\quad \mbox{for all $\, z\in B_r^E(0)$.}
\]
\end{la}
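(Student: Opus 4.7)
My plan is to execute the classical composition-of-power-series argument via formal substitution and rearrangement, using the ultrametric to handle all convergence issues. For each $n\geq 0$, I choose a continuous $n$-linear map $\beta_n\in \cL^n(F,H)$ lifting $g_n$, i.e.\ with $g_n = \beta_n\circ \Delta_n^F$ and $\|\beta_n\|\leq 2\|g_n\|$ (possible by the definition of the quotient norm). Formally substituting $f(z) = \sum_{k\geq 0} f_k(z)$ into $g_n(w) = \beta_n(w,\ldots,w)$ and expanding by $n$-linearity produces, for each $(n,k_1,\ldots,k_n) \in \N_0\times\N_0^n$, a continuous homogeneous polynomial
\[
q_{n,k_1,\ldots,k_n}(z) \;:=\; \beta_n(f_{k_1}(z),\ldots,f_{k_n}(z)) \;\in\; \Pol^m(E,H)
\]
of degree $m := k_1+\cdots+k_n$, with $\|q_{n,k_1,\ldots,k_n}\|\leq \|\beta_n\|\prod_i \|f_{k_i}\|$. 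I then propose to define
\[
h_m \;:=\; \sum_{n\geq 0}\;\sum_{\substack{k_1,\ldots,k_n\geq 0\\ k_1+\cdots+k_n=m}} q_{n,k_1,\ldots,k_n} \;\in\; \Pol^m(E,H)
\]
and to establish both the convergence of this series in $\Pol^m(E,H)$ and the identity $g(f(z)) = \sum_m h_m(z)$ on $B_r^E(0)$, together with strict convergence radius at least $r$.

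The central estimate exploits $\sup_{k\geq 1}\|f_k\|r^k \leq \rho_2$ together with $\|f_0\| < \rho_2$. Fixing $r'\in\;]0,r[$, set $\rho_2' := \max(\|f_0\|,\rho_2 r'/r)$, so that $\rho_2' < \rho_2$. For $k\geq 1$ one has $\|f_k\|r'^k \leq \rho_2(r'/r)^k \leq \rho_2 r'/r \leq \rho_2'$, while $\|f_0\|\leq \rho_2'$. Hence, uniformly in $k_i\in \N_0$,
\[
\|q_{n,k_1,\ldots,k_n}\|\,r'^m \;\leq\; \|\beta_n\|\prod_{i=1}^n \|f_{k_i}\|\,r'^{k_i} \;\leq\; \|\beta_n\|(\rho_2')^n.
\]
Since $\rho_2' < \rho_2$ and $\rho_2$ is the strict convergence radius of $\sum_n g_n$, the right-hand side tends to $0$ as $n\to\infty$. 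Combined with $\|f_k(z)\|\leq \rho_2(\|z\|/r)^k\to 0$ as $k\to\infty$ for fixed $n$ and $\|z\|<r$, this shows the unordered family $(q_{n,k_1,\ldots,k_n}(z))$ is summable in the ultrametric Banach space $H$ for every $z\in B_r^E(0)$. Rearrangement, together with continuity of each $\beta_n$, then gives $\sum_m h_m(z) = \sum_n \beta_n(f(z),\ldots,f(z)) = \sum_n g_n(f(z)) = g(f(z))$.

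The hardest step is to sharpen this to conclude that $\sum_m h_m$ itself has strict convergence radius at least $r$, i.e.\ $\|h_m\|r'^m \to 0$ as $m\to\infty$ for every $r'<r$. The bound $\|q_{n,k_1,\ldots,k_n}\|r'^m\leq \|\beta_n\|(\rho_2')^n$ is independent of $m$, so given $\epsilon>0$ one can pick $N$ with $\|\beta_n\|(\rho_2')^n<\epsilon$ for all $n\geq N$; this handles the tail $n\geq N$ uniformly in $m$. For each fixed $n<N$, the constraint $k_1+\cdots+k_n=m$ forces $\max_i k_i \geq m/n$, which tends to $\infty$ as $m\to\infty$; hence at least one factor $\|f_{k_i}\|r'^{k_i}$ is bounded by $\sup_{k\geq m/n}\|f_k\|r'^k$, and this supremum tends to $0$ because $\sum f_k$ has strict convergence radius $\rho_1\geq r>r'$. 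The remaining $n-1$ factors stay bounded by $\rho_2'$, so $\sup_{\sum k_i=m} \|q_{n,k_1,\ldots,k_n}\|r'^m \to 0$ for each such $n$, and, there being only finitely many $n<N$, also $\|h_m\|r'^m\to 0$. The series $\sum_m h_m$ is then analytic on $B_r^E(0)$ by \cite[4.2.4]{Bo1} and agrees with $g\circ f$ there via the preceding rearrangement; this is precisely the quantitative composition theorem \cite[4.1.5]{Bo1}.
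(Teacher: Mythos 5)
Your argument is correct, but it takes a different route from the paper: the paper does not prove Lemma~\ref{quanticomp} at all, it simply records it as an instance of the quantitative composition theorem \cite[4.1.5]{Bo1}, whereas you reprove that result from scratch by formal substitution. Your proof is sound: the lifts $\beta_n$ with $\|\beta_n\|\leq 2\|g_n\|$ exist because the quotient norm is an infimum; the bound $\|q_{n,k_1,\ldots,k_n}\|\leq\|\beta_n\|\prod_i\|f_{k_i}\|$ follows by lifting each $f_{k_i}$ to a $k_i$-linear map and passing to the infimum over these lifts; the choice $\rho_2'=\max(\|f_0\|,\rho_2 r'/r)<\rho_2$ correctly exploits both hypotheses ($\|f_0\|<\rho_2$ from $f(0)\in B^F_{\rho_2}(0)$, and $\sup_{k\geq 1}\|f_k\|r^k\leq\rho_2$); and the two-regime estimate (tail in $n$ uniform in $m$, plus the observation $\max_i k_i\geq m/n$ for the finitely many $n<N$) is exactly what is needed to get $\|h_m\|r'^m\to 0$ for every $r'<r$, hence radius at least $r$. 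Two small points you should make explicit to be airtight: first, that $f(z)\in B^F_{\rho_2}(0)$ for $z\in B^E_r(0)$ (your own estimates give $\|f(z)\|\leq\max\{\|f_0\|,\rho_2\|z\|/r\}<\rho_2$), so that $g(f(z))=\sum_n g_n(f(z))$ is even defined; second, that in an ultrametric Banach space a family is summable as soon as its terms tend to $0$ along cofinite sets, and that summable families may be grouped arbitrarily and pulled through the continuous multilinear maps $\beta_n$ one slot at a time --- these are the facts licensing your rearrangement and the identity $\sum_{k_1,\ldots,k_n}\beta_n(f_{k_1}(z),\ldots,f_{k_n}(z))=g_n(f(z))$. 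What the two approaches buy: the paper's citation is shorter and defers the bookkeeping to Bourbaki, while your version is self-contained and makes visible how the ultrametric inequality (summability from ``terms tend to zero'', and $\|h_m\|\leq\sup\|q\|$) replaces the majorant-series estimates needed over $\R$ or $\C$.
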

{\bf Ultrametric inverse function theorem.}
The domination principle (\ref{domi}) implies that the inverse function theorem
over ultrametric fields is much nicer than its real counterpart.
To formulate the theorem, let us define
\[
\Lip(f):=\sup\left\{\frac{\|f(y)-f(z)\|_F}{\|y-z\|_E}
\colon y\not= z\in U\right\}\in [0,\infty]
\]
if
$E$ and $F$ a ultrametric Banach
spaces and $f\colon U\to F$ is a function on a subset $U\sub E$.
The function $f$ is called (globally) \emph{Lipschitz} if $\Lip(f)<\infty$.
If $U$ is open, $f$ is analytic and $x\in U$, we write $f'(x)\colon E\to F$ for the total differential of $f$ at $x$.
The next fact combines \cite[Thm.\,5.8]{FIO} and \cite[5.7.6]{Bo1}.
%
\begin{thm}[Ultrametric Inverse Function Theorem]\label{IFT}
Let $(E,\|.\|)$ be an ultrametric Banach space over a complete ultrametric field,
$x\in E$, $r>0$ and $f\colon B^E_r(x)\to E$ be an analytic map.
Let $A \in \GL(E)$ and assume
that the function $\wt{f}\colon B^E_r(x)\to E$ determined by
\[
f(y)=f(x)+A .(y-x)+\wt{f}(y)
\]
is Lipschitz, with
%
\begin{equation}\label{lipdomi}
\Lip(\wt{f})\; <\; \frac{1}{\|A^{-1}\|}\,.
\end{equation}
Then the following holds:
\begin{itemize}
\item[\rm(a)]
$f(B^E_r(x))$ is open, $f$ is injective and $f^{-1}\colon f(B^E_r(x))\to E$ is analytic.
\item[\rm(b)]
$f(B^E_s(y))=f(y)+A.B^E_s(0)$, for all $y\in B^E_r(x)$ and $s\in \;]0,r]$.\,\Punkt
\end{itemize}
\end{thm}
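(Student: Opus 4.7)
The strategy is to reduce to a normalized situation, extract an isometry identity from the domination principle~(\ref{domi}), prove~(b) by a contraction mapping argument, and then derive~(a) by combining this identity with the local analytic inverse function theorem. First I would translate by $x$ and precompose with $A^{-1}$, i.e.\ replace $f$ by $y\mto A^{-1}(f(y+x)-f(x))$; since $A$ is a homeomorphism this reduces the situation to $x=0$, $f(0)=0$, $A=\id_E$, and $f=\id_E+g$ with $\Lip(g)=\|A^{-1}\|\Lip(\wt{f})<1$. The domination principle then gives, for $y\ne z$ in $B^E_r(0)$,
\[
\|f(y)-f(z)\|=\|(y-z)+(g(y)-g(z))\|=\|y-z\|,
\]
since $\|g(y)-g(z)\|<\|y-z\|$. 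Undoing the normalization, this becomes the key identity $\|A^{-1}(f(y)-f(z))\|=\|y-z\|$ in the original setup; it already implies injectivity of $f$ and shows that $f^{-1}$ is Lipschitz with constant $\|A^{-1}\|$.

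For part~(b), the inclusion $f(B^E_s(y))\sub f(y)+A.B^E_s(0)$ is immediate from the identity. For the reverse, given $v\in B^E_s(0)$ I need $z\in B^E_s(y)$ with $f(z)=f(y)+Av$. Rewriting $f(z)-f(y)=A(z-y)+\wt{f}(z)-\wt{f}(y)=Av$ produces the fixed-point equation
\[
z=T(z):=y+v+A^{-1}\bigl(\wt{f}(y)-\wt{f}(z)\bigr).
\]
Here $\Lip(T)\le\|A^{-1}\|\Lip(\wt{f})<1$, and $T(y)=y+v$ lies in $B^E_s(y)$ because $\|v\|<s$; the ultrametric inequality then propagates this to $T(B^E_s(y))\sub B^E_s(y)$. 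Since ultrametric open balls are simultaneously closed, $B^E_s(y)$ is complete as a subspace of $E$, and Banach's fixed point theorem supplies the required $z$.

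Part~(a) now falls out. Injectivity was established above, and openness of $f(B^E_r(x))$ is a consequence of~(b): for any $y\in B^E_r(x)$ one can choose $s>0$ with $B^E_s(y)\sub B^E_r(x)$, and then $f(y)+A.B^E_s(0)$ is open since $A$ is a homeomorphism. For analyticity of $f^{-1}$ I would invoke the local analytic inverse theorem \cite[5.7.6]{Bo1}: the derivative $f'(y)=A+\wt{f}'(y)$ satisfies $\|A^{-1}\wt{f}'(y)\|\le\|A^{-1}\|\Lip(\wt{f})<1$, so by the Neumann-series argument~\ref{lininv} the operator $f'(y)$ lies in $\GL(E)$; this gives an analytic local inverse of $f$ around each $y$, and the global injectivity already established forces these local inverses to coincide on overlaps and glue to a single analytic map on $f(B^E_r(x))$.

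The main obstacle is the very last step, promoting the Lipschitz global inverse to an analytic one: the isometric identity handles injectivity, continuity, openness, and~(b) uniformly and cheaply, but upgrading continuity to analyticity genuinely relies on the non-trivial local power-series inversion theorem for analytic maps with invertible derivative, after which the gluing argument via global injectivity is routine.
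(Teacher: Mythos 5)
Your proof is correct. Note that the paper does not prove Theorem~\ref{IFT} at all: it is stated as a fact obtained by combining \cite[Thm.\,5.8]{FIO} (the ultrametric Lipschitz inverse function theorem with the ball property~(b)) with \cite[5.7.6]{Bo1} (local analytic invertibility when $f'(y)\in\GL(E)$). Your argument reconstructs exactly the content of those citations: the domination principle~(\ref{domi}) yields the isometry identity $\|A^{-1}(f(y)-f(z))\|=\|y-z\|$, the contraction $T(z)=y+v+A^{-1}(\wt{f}(y)-\wt{f}(z))$ on the ball $B^E_s(y)$ (which is closed, hence complete, and contained in $B^E_r(x)$ by the ultrametric inequality since $s\leq r$) gives~(b) and openness, and the Neumann-series argument of~\ref{lininv} shows $f'(y)=A(\id_E+A^{-1}\wt{f}\,'(y))\in\GL(E)$, so the Bourbaki local inverses glue along the globally injective $f$. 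Two cosmetic points: after the normalization one only has $\Lip(g)\leq\|A^{-1}\|\Lip(\wt{f})$ rather than equality (which is all you use), and the bound $\|\wt{f}\,'(y)\|\leq\Lip(\wt{f})$ deserves a one-line justification; neither affects the argument.
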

%
%
\begin{rem}\label{remstrict}
\begin{itemize}
\item[(a)]
Condition (b) in Theorem~\ref{IFT} means that $f$ behaves on balls like an affine-linear map.
\item[(b)]
$\frac{1}{\|A^{-1}\|}$
can be interpreted as an expansion factor,
in the sense that $\|A y\|\geq \frac{1}{\|A^{-1}\|}\|y\|$ for all $y\in E$.
\item[(c)] Condition (\ref{lipdomi}) means
that the remainder term $\wt{f}$
is dominated by the linear map~$A$.
\item[(d)]
Condition (\ref{lipdomi}) is automatically satisfied if we take $A:=f'(x)$
and choose $r>0$
small enough, since the analytic map $f$ is ``strictly differentiable''
at $x$ and thus $\lim_{s\to 0}\Lip(\wt{f}|_{B^E_s(x)})=0$
(see 4.2.3 and 3.2.4 in \cite{Bo1}).
\end{itemize}
\end{rem}
%
%
\begin{rem}\label{behaveba}
Let $f\colon B^E_r(0)\to F$ be analytic, with $f(0)=0$.
\begin{itemize}
\item[(a)]
If $\|f'(0)\|\leq a$, then Remark \ref{remstrict}\,(d)
and (\ref{domi}) imply that $f(B_s^E(0))\sub B^F_{as}(0)$
for all sufficiently small $s>0$.
\item[(b)]
In particular, $f(B_s^E(0))\sub B^F_s(0)$ for small $s>0$ if $\|f'(0)\|\leq 1$.
\item[(c)]
If $E=F$ and $f'(0)$ is a surjective isometry, then $f(B^E_s(0))=B^E_s(0)$
for small $s>0$
and $f|_{B_s^E(0)}$ is an isometry,
by (\ref{domi}), Remark~\ref{remstrict}\,(d)
and Theorem~\ref{IFT}\,(b).\footnote{In fact,
this holds for all $s\in \;]0,r]$ such that $\Lip(\wt{f}|_{B^E_s(0)})<1$.} 
\end{itemize}
\end{rem}
{\bf Manifolds and Lie groups.}
An \emph{analytic manifold} modelled on an
ultrametric Banach space $E$ over a complete ultrametric field $\K$ is defined
as usual (as a Hausdorff topological
space~$M$, together with a (maximal) set $\cA$ of homeomorphisms (``charts'')
$\phi\colon U_\phi\to V_\phi$
from open subsets of~$M$\linebreak
onto open subsets of~$E$,
such that $M=\bigcup_{\phi\in \cA}U_\phi$
and the mappings\linebreak
$\phi\circ \psi^{-1}$
are analytic for all $\phi,\psi\in \cA$).
Also the tangent space $T_pM$ of $M$ at $p\in M$,
analytic maps $f\colon M\to N$ between analytic manifolds,
and the tangent maps $T_pf\colon T_pM\to T_{f(p)}N$
can be defined as usual (cf.\ \cite{Bo1}),
as well as the tangent bundle $TM$
and $Tf\colon TM\to TN$.
If $f\colon M\to E$ is\linebreak
an analytic map to a Banach space,
we write $df$ for the second component of $Tf\colon TM\to TE\isom E\times E$.
An \emph{analytic Lie group}~$G$ over $\K$
is a group, equipped with an analytic manifold structure
modelled on an ultrametric\linebreak
Banach space over $\K$,
such that the group inversion and group
multiplication are analytic (cf.\ \cite{Bo2}).
As usual, we write $L(G):=T_1(G)$
and $L(\alpha):=T_1(\alpha)$, if $\alpha\colon G\to H$
is an analytic homomorphism between analytic Lie groups.
Let $M$ be an analytic manifold modelled on an ultrametric Banach space~$E$.
A subset $N\sub M$ is called a \emph{submanifold} of
$M$ if there exists a complemented vector subspace $F$ of the modelling space
of $M$ such that
each point $p\in N$ is contained in
the domain~$U$ of some chart $\phi\colon U\to V$ of $M$
such that $\phi(N\cap U)=F\cap V$.
By contrast, an analytic manifold $N$ is called an \emph{immersed submanifold} of $M$
if $N\sub M$ as a set and the inclusion map $\iota \colon N\to M$
is an immersion.
Subgroups of Lie groups with analogous properties are called
\emph{Lie subgroups}
and \emph{immersed Lie subgroups}, respectively.
\section{Centre-stable manifolds}\label{seccs}
%
%
In this section, we prove the Ultrametric Centre-Stable
Manifold Theorem (Theorem \ref{csthm}),
and discuss related topics.
We first regard the local situation.\\[2.5mm]
Let $(E,\|.\|)$ be an ultrametric Banach space over a complete ultrametric field $(\K,|.|)$,
such that $E=E_1\oplus E_2$ as a topological vector space
with closed vector subspaces $E_1$ and $E_2$,
such that
%
\begin{equation}\label{thussup}
\|x+y\|=\max\{\|x\|,\|y\|\}\quad\mbox{for all $x\in E_1$ and $y\in E_2$.}
\end{equation}
Given $r>0$,
we have
$B_r^E(0)=B_r^{E_1}(0)\times B_r^{E_2}(0)$, by (\ref{thussup}).
Let $f=(f_1,f_2)\colon$ $B_r^E(0)\to E=E_1\oplus E_2$ be an analytic map such that
$f(0)=0$ and $f'(0)$ leaves $E_1$ and $E_2$ invariant.
Thus
\[
f'(0)=A\oplus B
\]
with certain continuous linear maps $A\colon E_1\to E_1$
and $B\colon E_2\to E_2$. Let $a\in \;]0,1]$.
We assume that
%
\begin{equation}\label{goodA}
\|A\|\leq a
\end{equation}
and we assume that there exists a right inverse $C\in \cL(E_2)$ to $B$
(i.e., $B\circ C=\id_{E_2}$) such that\footnote{We are only interested
in the case where $B$ is invertible, but this hypothesis is not needed
for the following construction.}
\begin{equation}\label{goodC}
\frac{1}{\|C \|} \, >  \, a \, .
\end{equation}
Then
\[
f(x,y)=(A x, B y)+\wt{f}(x,y)
\]
determines an analytic map
$\wt{f}=(\wt{f}_1,\wt{f}_2) \colon B_r^E(0)\to E$
such that $\wt{f}(0)=0$
and $\wt{f}\, '(0)=0$.
After shrinking $r$, we may assume
that $\wt{f}$ is Lipschitz with
%
\begin{equation}\label{ftlip}
\Lip(\wt{f})<a
\end{equation}
(see Remark \ref{remstrict}\,(d)), and that
\[
\wt{f}_1(x,y)=\sum_{k=2}^\infty a_k(x,y)\quad\mbox{ and }
\quad
\wt{f}_2(x,y)=\sum_{k=2}^\infty b_k(x,y)
\]
for all $x\in B^{E_1}_r(0)$ and $y\in B^{E_2}_r(0)$,
for suitable continuous homogeneous polynomials
$a_k\colon E\to E_1$ and $b_k\colon E\to E_2$ of degree $k$
such that
\[
\lim_{k\to\infty} \|a_k\|r^k=0\quad\mbox{ and }\quad
\lim_{k\to\infty}\|b_k\|r^k=0\,.
\]
After replacing $f(x)$ by $\lambda^{-1} f(\lambda x)$ with $0\not=\lambda\in \K$
sufficiently small, we can achieve that
\[
\|a_k\|,\|b_k\|< 1\quad\mbox{for all $k\geq 2$.}
\]
After decreasing $r$, we may assume that
\begin{equation}\label{smr}
r\leq 1\,.
\end{equation}
Then the following holds:
%
%
\begin{prop}\label{thmloccs}
There exists an
analytic function $\phi \colon B^{E_1}_{ar}(0)\to E_2$
with the following properties:
\begin{itemize}
\item[\rm(a)]
$\phi(B^{E_1}_{ar}(0))\sub B^{E_2}_{ar}(0)$
and the graph of $\phi$ is $f$-invariant, more precisely
%
\begin{equation}\label{locinvgs}
f(\Gamma_s)\sub \Gamma_{a s}\sub \Gamma_s
\quad \mbox{for all $s\in \;]0, a r]$,}
\end{equation}
where
$\Gamma_s:=\{(x,\phi(x))\colon x\in B^{E_1}_s(0)\}$
for $s\in \;]0,ar]$;
\item[\rm(b)]
$\phi(0)=0$ and $\phi'(0)=0$
$($whence $T_{(0,0)}(\Gamma_{a r})=E_1)$; and
\item[\rm(c)]
There are continuous homogeneous polynomials
$c_k\colon E_1\to E_2$ of
degree~$k$ with
$\|c_k\| < a^{1-k}$
and $\phi(x)=\sum_{k=2}^\infty c_k(x)$ for all~$x\in B^{E_1}_{a r}(0)$.
\end{itemize}
If $B$ is invertible, then $\phi$ is uniquely determined.
\end{prop}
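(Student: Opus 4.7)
The plan is to construct $\phi$ as a formal power series in $x$ whose homogeneous components $c_k\in\Pol^k(E_1,E_2)$ are determined by a recursion coming from the invariance equation, and then to verify analytic convergence and properties (a)--(c). The starting observation is that the $f$-invariance of the graph $\Gamma$ of $\phi$ is (formally) equivalent to the functional equation $f_2(x,\phi(x))=\phi(f_1(x,\phi(x)))$ anticipated in~(\ref{formal}). Writing $f_1(x,y)=Ax+\tilde f_1(x,y)$, $f_2(x,y)=By+\tilde f_2(x,y)$ and inserting the ansatz $\phi=\sum_{k\ge 2}c_k$ (which enforces $\phi(0)=0$ and $\phi'(0)=0$), I would equate homogeneous degree-$k$ parts on both sides. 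This produces a recursion
\[
B c_k - c_k\circ A \;=\; P_k
\]
with $P_k\in\Pol^k(E_1,E_2)$ depending polynomially on $c_2,\ldots,c_{k-1}$ and on the known $a_m,b_m$. The crucial combinatorial fact is that $c_i$ with $i\ge k$ cannot appear in $P_k$: in any monomial arising from $b_m(x,\phi(x))$ or from $c_j(Ax+\tilde f_1(x,\phi(x)))$ (other than the isolated $c_k(Ax)$), each slot occupied by $c_i(x)$ contributes $x$-degree $\ge 2$, so a factor $c_i$ with $i\ge k$ would push the total degree above~$k$.

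To solve the recursion, I would invoke the right inverse $C$ of $B$ and define $c_k\in\Pol^k(E_1,E_2)$ to be the unique fixed point of the affine map $T_k\colon c\mapsto CP_k+C(c\circ A)$. By Lemma~\ref{pbpfw}, the linear part of $T_k$ has operator norm at most $\|C\|\cdot\|A\|^k\le\|C\|\,a^k<a^{k-1}\le 1$, so $T_k$ is a strict ultrametric contraction and its fixed point is given by the convergent geometric series $\sum_{n\ge 0}(C(\,\cdot\,\circ A))^n(CP_k)$. Applying $B$ to $c_k=CP_k+C(c_k\circ A)$ and using $BC=\id$ then yields $Bc_k-c_k\circ A=P_k$, so the required recursion is satisfied. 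Since the ultrametric maximum over the geometric series is attained at $n=0$, the estimate $\|c_k\|\le\|C\|\,\|P_k\|$ will control everything.

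The heart of the matter is the inductive bound $\|c_k\|<a^{1-k}$. Assuming it for $c_2,\ldots,c_{k-1}$, I would bound $\|P_k\|<a^{2-k}$ by a polarization argument: expanding $\tilde f_2(x,\phi(x))$ and $\phi(f_1(x,\phi(x)))-c_k(Ax)$ via multilinear representatives $\beta$ of the $a_m$ and $b_m$ (chosen with $\|\beta\|<1$, which is possible thanks to~(\ref{thussup})), each contribution to degree $k$ becomes a product of $\|\beta\|$, of powers of $\|A\|\le a$, and of factors $\|c_{i_\ell}\|<a^{1-i_\ell}$. Bookkeeping the $x$-degrees via identities like $\sum i_\ell=k-m+q$ (or $\sum n_\ell=k-j+s$ in the nested expansion of the $\phi$-side) collapses the exponents on $a$ to $m-k$ or $j+1-k$, each $\le 2-k$ since $m,j\ge 2$. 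Hence $\|P_k\|<a^{2-k}$ and $\|c_k\|\le\|C\|\,\|P_k\|<a^{-1}\cdot a^{2-k}=a^{1-k}$, closing the induction and establishing~(c).

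The remaining conclusions then follow from the bounds. For any $s<ar$, using $r\le 1$ (so $ar\le a$), $\|c_k\|s^k<a(s/a)^k\to 0$ since $s/a<r\le 1$; hence $\sum c_k$ has radius of strict convergence $\ge ar$ and defines an analytic $\phi\colon B^{E_1}_{ar}(0)\to E_2$ satisfying~(b). The same estimate gives $\|\phi(x)\|<\|x\|^2/a\le ar$ for $\|x\|<ar$, yielding the containment in~(a). The formal identity becomes a genuine identity of analytic maps on $B^{E_1}_{ar}(0)$; for $\|x\|<s\le ar$ one has $\|(x,\phi(x))\|<s$ (from $\|\phi(x)\|<\|x\|^2/a\le s$), so $\|\tilde f_1(x,\phi(x))\|<as$ by~(\ref{ftlip}), which combined with $\|Ax\|\le a\|x\|<as$ gives $\|f_1(x,\phi(x))\|<as$; therefore $f(x,\phi(x))=(f_1(x,\phi(x)),\phi(f_1(x,\phi(x))))\in\Gamma_{as}$, proving~(\ref{locinvgs}). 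Finally, if $B$ is invertible then $C=B^{-1}$ and the fixed-point equation for $T_k$ is equivalent to the original recursion $Bc_k-c_k\circ A=P_k$, so any analytic $\phi$ satisfying (a)--(c) must share each $c_k$ with our construction, giving uniqueness. The main obstacle I foresee is the polarization/bookkeeping step giving $\|P_k\|<a^{2-k}$: it requires choosing multilinear representatives with the right norm bounds via~(\ref{thussup}), and then carefully tracking how the $x$-degrees and the inductive factors $a^{1-i_\ell}$ combine into the uniform bound.
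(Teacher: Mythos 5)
Your proposal is correct and follows essentially the same route as the paper: the degree-by-degree recursion $Bc_k-c_k\circ A=P_k$ solved via the right inverse $C$, with your affine-contraction fixed point being exactly the paper's Neumann series $(\id-C_*A^*)^{-1}C_*$, the same inductive bound $\|c_k\|<a^{1-k}$, and the same convergence and invariance arguments (the paper just spells out the composition estimates, via Lemma~\ref{quanticomp} and the bounds $\|\eta_j\|<a^{2-j}$, that turn the formal identity into a genuine one). No substantive differences.
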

\begin{proof}
For all integers $k\geq 2$, we choose
$\alpha_k\in \cL^k(E,E_1)$
and $\beta_k\in \cL^k(E,E_2)$ such that $a_k=\alpha_k\circ\Delta^E_k$,
$b_k=\beta_k\circ\Delta^E_k$, and $\|\alpha_k\|$, $\|\beta_k\|<1$.\\[2.5mm]
If $\phi$ is an analytic function of the form described in (c),
then
\[
\sup\{\|c_k\|(a r)^k\colon k\geq 2\}\; \leq \; r
\]
(using (\ref{smr}))
and $\phi(0)=0$. Hence $f(x,\phi(x))$ is defined
for all $x\in B^{E_1}_{a r}(0)$
and given globally by its Taylor series around $0$
(by Lemma \ref{quanticomp}).
Now let $x\in B^{E_1}_s(0)$, where $s\in \;]0,a r]$.
Then
\[
\|\phi(x)\|\,\leq\, \|x\|\,,
\]
since $\|c_k(x)\|\leq \|c_k\|\cdot\|x\|^k\leq a^{1-k}\|x\|^k
=\big(\frac{\|x\|}{a}\big)^{k-1}\|x\|\leq \|x\|$.
Hence $\phi(B^{E_1}_{ar}(0))\sub B_{ar}^{E_2}(0)$.
Moreover,
$\|f_1(x,\phi(x))\|=\|Ax+\wt{f}_1(x,\phi(x))\|\leq a \|x\|< a s\leq s$
(using~(\ref{ftlip})),
and hence
%
\begin{eqnarray}
f(x,\phi(x))\in \Gamma_s &\aeq & f(x,\phi(x))=\big(f_1(x,\phi(x)),\, \phi(f_1(x,\phi(x)))\big)\notag\\
&\aeq& f_2(x,\phi(x)) = \phi(f_1(x,\phi(x)))\,.\label{fstimpl}
\end{eqnarray}
We mention that also the right hand side of (\ref{fstimpl})
is given on all of $B_{ar}^{E_1}(0)$ by its Taylor series around $0$,
because the homogeneous polynomials $\eta_j$
of the Taylor series of $f_1\circ (\id,\phi)$ around $0$
vanish if $j=0$ and
have norm $\|\eta_j\|\leq a^{2-j}$ if $j\geq 1$
(as will be verified in (\ref{verifeta})),
whence $\|\eta_j\| (ar)^j\leq  ar$
and so Lemma~\ref{quanticomp} applies.
Hence the validity of~(\ref{fstimpl}) for all $x\in B^{E_1}_{ar}(0)$
is equivalent to an identity of formal series:
%
\begin{eqnarray}
\lefteqn{B(c_2(x)+c_3(x)+\cdots)+b_2(x,c_2(x)+\cdots)
+b_3(x,c_2(x)+\cdots)+\cdots}\notag \\
&=&
c_2\big(Ax+a_2(x,c_2(x)+c_3(x)+\cdots)
+a_3(x,c_2(x)+\cdots)+\cdots\big)\notag\\
& + & 
c_3\big(Ax+a_2(x,c_2(x)+c_3(x)+\cdots)
+a_3(x,c_2(x)+\cdots)+\cdots\big)\notag\\
& + & \cdots \label{huge}
\end{eqnarray}
Comparing the lowest order term (of second order) on both sides,
we see that
\[
Bc_2(x) + b_2(x,0)=c_2(Ax)
\]
is required, which can be rewritten as $(B_*-A^*)(c_2)=-b_2(\sbull,0)$ or
%
\begin{equation}\label{wahlp}
B_*(\id -C_*A^*)(c_2)=-b_2(\sbull,0)\, ,
\end{equation}
writing
$A^*:=\Pol^2(A,E_2)$,
$B_*:=\Pol^2(E_1,B)$
and
$C_*:=\Pol^2(E_1,C)$
as in Lemma \ref{pbpfw}.
Since $\|C_*A^*\|\leq \|C\|\cdot \|A\|^2\leq \|C\|\cdot \|A\| <1$
by Lemma \ref{pbpfw}, using~\ref{lininv} it follows 
that $\id-C_*A^*$ is invertible
and $\|(\id - C_* A^* )^{-1}\| \leq 1$.
Thus
\[
c_2:=(\id -C_*A^*)^{-1} C_* (-b_2(\sbull,0))\in \Pol^2(E_1,E_2)
\]
has norm $\|c_2\|\leq \|C_*\|\cdot\|b_2(\sbull,0)\|\leq \|C\|<  \frac{1}{a}$.
Moreover, (\ref{wahlp}) holds for this choice of~$c_2$,
and if $B$ is invertible, then~$c_2$ is determined by (\ref{wahlp}).\\[2.5mm]
Let $n\geq 3$ now and, by induction, suppose that we have already found
$c_k\in\Pol^k(E_1,E_2)$ for $k=2,\ldots, n-1$
such that $\|c_k\| < a^{1-k}$
and~(\ref{huge})
holds up to order $n-1$ 
if these $c_2,\ldots, c_{n-1}$ are used
(and that these are unique if $B$ is invertible).
For $k=2,\ldots, n-1$, let
 $\gamma_k\in \cL^k(E_1,E_2)$
such that $c_k=\gamma_k\circ\Delta^{E_1}_k$
and $\|\gamma_k\|<a^{1-k}$.
Define 
$\gamma_1(x):=x$ for $x\in E_1$.
Identifying $E_1$ with the vector subspace
$E_1\times \{0\}$ of $E$
and $E_2$ with $\{0\}\times E_2$,
the previous maps take $x$ to $(0,\gamma_k(x))$ and $(x,0)$,
respectively.\\[2.5mm]
Define  $A^*:=\Pol^n(A,E_2)$,
$B_*:=\Pol^n(E_1,B)$ and
$C_*:=\Pol^n(E_1,C)$.
The $n$-th order term of (\ref{huge}) then reads
%
\begin{equation}\label{lesshuge}
B_*(c_n) + r_n= A^* (c_n)+ s_n
\end{equation}
with
%
\begin{equation}\label{fsterr}
r_n=\sum_{k=2}^n \sum_{\stackrel{\scriptstyle j_1,\ldots, j_k\in \N}{j_1+\cdots+ j_k=n}}
\beta_k\circ(\gamma_{j_1},\ldots, \gamma_{j_k})
\end{equation}
and
%
\begin{equation}\label{secerr}
s_n=\sum_{k=2}^{n-1} 
\sum_{\stackrel{\scriptstyle j_1,\ldots, j_k\in \N}{j_1+\cdots+ j_k=n}}
\gamma_k\circ (\eta_{j_1},\ldots, \eta_{j_k})\,,
\end{equation}
where $\eta_1:=A$ and
%
\begin{equation}\label{simfml}
\eta_j=\sum_{\ell=2}^j \sum_{\stackrel{\scriptstyle i_1,\ldots, i_\ell \in \N}{i_1+\cdots+ i_\ell=j}}
\alpha_\ell\circ (\gamma_{i_1},\ldots, \gamma_{i_\ell})
\end{equation}
for $j=2,\ldots, n-1$.
For $j_1,\ldots, j_k$ as
in (\ref{fsterr}), we have
\[
\|\beta_k\circ(\gamma_{j_1}\times \cdots\times \gamma_{j_k})\|
\leq \|\beta_k\| \cdot a^{1-j_1}\cdots a^{1-j_k}
< a^{k-n}\leq a^{2-n}\,.
\]
Since
$\beta_k\circ(\gamma_{j_1},\ldots, \gamma_{j_k})=
\beta_k\circ(\gamma_{j_1}\times \cdots \times \gamma_{j_k})\circ \Delta^{E_1}_n$,
we conclude that
%
\begin{equation}\label{fibas}
\| r_n\| \, < \, a^{2-n}\,.
\end{equation}
Likewise, the norm of each summand in (\ref{simfml})
is $<a^{\ell-j}\leq a^{2-j}$, and thus
\begin{equation}\label{verifeta}
\|\eta_j\|\; <\;  a^{2-j}\, .
\end{equation}
Consequently, the norm of each summand in (\ref{secerr})
is at most $\|\gamma_k\|\cdot a^{2k-n}
< a^{1-k} a^{2k-n}=a^{k-n+1}\leq a^{2-n}$.
Therefore,
%
\begin{equation}\label{secbas}
\|s_n\|\, < \, a^{2-n}\,.
\end{equation}
In view of Lemma \ref{pbpfw},
(\ref{nonexpa}), (\ref{fibas}) and (\ref{secbas}),
\[
c_n:=(\id -C_*A^*)^{-1} C_* (s_n-r_n)\in \Pol^n(E_1,E_2)
\]
is a solution to
\[
(B_*-A^*)(c_n)=B_*(\id-C_*A^*)(c_n)=s_n-r_n
\]
(an hence to (\ref{lesshuge})),
of norm $\|c_n\|\leq \|C\|\cdot a^{2-n}  <  a^{1-n}$.
If $B$ is invertible, then  
$B_*-A^*=B_*(\id-(B^{-1})_*A^*)$ is invertible,
entailing that $c_n$ is uniquely determined by (\ref{lesshuge}).
\end{proof}
$\;$\vfill\pagebreak

\noindent
{\bf Proof of Theorem \ref{csthm}.}
\begin{numba}\label{reusable}
Let $E:=T_p(M)$ and $\kappa\colon P\to U\sub E$ be a chart of $M_0$ around~$p$
such that $\kappa(p)=0$ and $d\kappa(0)=\id_E$.
Let $Q\sub P$ be an
open neighbourhood of~$p$ such that $f(Q)\sub P$;
after shrinking $Q$, we may assume that $\kappa(Q)=B_r^E(0)$ for some $r>0$.
Then $g:=\kappa\circ f|_Q\circ\kappa^{-1}|_{B_r^E(0)}\colon B^E_r(0)\to E$
expresses $f|_Q$ in the local chart $\kappa$.\vspace{-1.3mm}
\end{numba}

(a) Let $E=E_1\oplus E_2$, with the norm $\|.\|$,
be the decomposition of $E$ into
an $a$-centre-stable subspace~$E_1$
and an $a$-unstable subspace~$E_2$
with respect to $\alpha:=T_p(f)=g'(0)$
(as in Definition~\ref{defcsub}).
Applying Proposition \ref{thmloccs} to $g$ (instead of $f$),
we see that, possibly after shrinking $r$,
there is an analytic map \mbox{$\phi\colon B^{E_1}_{ar}(0)\to B^{E_2}_{ar}(0)$}
as described there.
Then the graph $\Gamma_{a r}$ of $\phi$ is a submanifold of $B_r^E(0)$
tangent to $E_1$ at~$0$,
and hence $N:=\kappa^{-1}(\Gamma_{a r})$
is a submanifold of $Q$ (and hence of $M_0$)
tangent to $T_p(M)_{a,\obcs}$ at~$p$.
Now $g(\Gamma_{ar}) \sub \Gamma_{ar}$,
where $\Gamma_{ar}$ is a submanifold
and~$g$ is analytic.
Hence $g$ restricts to an analytic
map $\Gamma_{ar}\to \Gamma_{ar}$. Thus~$f$
restricts to an analytic map $N\to N$.\vspace{1mm}

(c) Let $\mu\colon V\to B^{E_1}_\tau(0)$
be a chart of~$N$ around~$p$ such that
$\mu(p)=0$ and $d\mu(p)=\id_{E_1}$.
There exists $\sigma\in \;]0,\tau]$
such that $h:=\mu \circ f\circ \mu^{-1}$
is defined on all of
$B^{E_1}_\sigma(0)$.
Since $h'(0)=T_p(f|_N)=:A$
with $\|A\|\leq a$,
Remark~\ref{behaveba}\,(a)
shows that $h(B^{E_1}_s(0))\sub
B^{E_1}_{as}(0)
\sub B^{E_1}_s(0)$ for all $s\in \;]0,\sigma]$,
after possibly shrinking~$\sigma$.
Moreover,
we may assume that
$\Omega_s:=\mu^{-1}(B^{E_1}_s(0))$
is a submanifold of~$M$ for each $s\in \;]0,\sigma]$
(after shrinking~$\sigma$ further if necessary).
Then the sets $\Omega_s$ with $s\in \;]0,\sigma]$
form a basis of open neighbourhoods of~$p$ in~$N$,
and each of them is an $a$-centre-stable
manifold and a submanifold of~$M$.\vspace{1mm}

(b) Let $N$ (and other notation) be as in the proof of~(a)
and $N_1$ be any $a$-centre-stable
manifold. Write
$g=(g_1,g_2)=g'(0)+\wt{g}\colon B^E_r(0)\to E_1\oplus E_2$,
where $\Lip(\wt{g})<a$
and $g'(0)=A\oplus B$ with $\|A\|\leq a$ and $\frac{1}{\|B^{-1}\|}>a$.
Then $P\cap N_1$ is an immersed
submanifold of~$P$ tangent to~$E_1$
and hence, after replacing~$N_1$ by an open subset of~$N_1$
(justified by~(c)),
we may assume that~$N_1$ is a submanifold of~$P$.
Since~$\kappa(N_1)$ is tangent to~$E_1$ at $0\in E$,
the inverse function theorem implies
that $\kappa(N_1)$ is the graph
of an analytic map $\psi\colon W\to E_2$
on some open $0$-neighbourhood
$W\sub B^{E_1}_r(0)$,
with $\psi(0)=0$ and $\psi'(0)=0$
(after shrinking~$N_1$ if necessary).
By Remark \ref{remstrict}\,(d),
we may assume that $\Lip(\psi)\leq 1$.
Then $\mu:=\pr_1\circ \kappa|_{N_1}$ is a chart
for~$N_1$ such that $\mu(0)=0$ and
$d\mu(p)=\id_{E_1}$
(where $\pr_1\colon E_1\oplus E_2\to E_1$).
Hence, by the proof of~(c), there exists $\sigma\in \;]0,r]$
such that $B^{E_1}_\sigma(0)\sub W$
\[
g(\Theta_s)\;\sub\; \Theta_{as}\quad \mbox{for all $\, s\in \;]0,\sigma]$,}
\]
where
$\Theta_s:=\{(x,\psi(x))\colon x\in B^{E_1}_s(0)\}$
for $s\in\;]0,\sigma]$.
After shrinking~$\sigma$
and conjugation with a homothety if necessary,
we may assume that~$\psi$
is given globally by a power series
and satisfies conditions analogous to~(b) and~(c) in Proposition~\ref{thmloccs}.
After replacing~$r$ and~$\sigma$
with $\min\{r,\sigma\}$,
we may assume that $r=\sigma$.
Then $\phi=\psi|_{B^{E_1}_{ar}(0)}$
(by the uniqueness part of Proposition \ref{thmloccs}),
and hence $N$ is an open submanifold
of~$N_1$.\,\Punkt

\begin{rem}\label{unicst}
We mention that $E_{s,\obcs}$ in Definition \ref{defcsub}
is uniquely determined.
In fact, $E_{a,\obcs}$ is the set of all $x\in E$ such that
$a^{-n}\|\alpha^n(x)\|$ is bounded
for the specified (and hence any compatible) norm.
If~$\alpha$ is invertible, then also~$E_{a,\obu}$
is unique, because $E_{a,\obu}$ is the $c$-centre-stable subspace with respect
to $\alpha^{-1}$ in this case, for each $c\in\;]\|\alpha_2^{-1}\|, \frac{1}{a}[$.
Also note that $E_{a,\obcs}=E_{b,\obcs}$  and $E_{a,\obu}=E_{b,\obu}$
for all positive real numbers $b\in [\|\alpha_1\|, \frac{1}{\|\alpha_2^{-1}\|}[$.
\end{rem}
\section{Centre  manifolds}\label{secc}
%
In this section, we prove the Ultrametric Centre Manifold Theorem.\\[2.5mm]
{\bf Proof of Theorem \ref{cthm}.}
Since $E:=T_p(M)$ admits a centre subspace with respect to $T_p(f)$,
we have a decomposition $E=E_\obs \oplus E_\obc \oplus E_\obu$
and corresponding norm $\|.\|$.\vspace{1mm}

(c) Let $N\sub M$ be a centre manifold around $p$ with respect to $f$.
Or, more
generally (for use in the proof of (a)),
let $N\sub M$ be an immersed submanifold\linebreak
containing $p$ which is
tangent to $T_p(M)_\obc=E_\obc$ at $p$,
and assume that $p$ has an open neighbourhood $P\sub N$
such that $f(P)\sub N$ and $f|_P\colon P\to N$ is analytic.
Let $W\sub N$ be a given neighbourhood of~$p$.
We choose a chart
$\mu\colon V\to B^{E_\obc}_\tau(0)$
of~$N$ around~$p$ such that $V\sub W\cap P$,
$\mu(p)=0$ and $d\mu(p)=\id_{E_\obc}$.
There exists $\sigma\in \;]0,\tau]$
such that $f(\mu^{-1}(B^{E_\obc}_\sigma(0)))\sub V$.
Then $h:=\mu \circ f\circ \mu^{-1}$
defines an analytic map
$B^{E_\obc}_\sigma(0)\to E_\obc$, such that
$h'(0)=T_p(f)|_{E_\obc}$ is a surjective linear isometry.
Remark~\ref{behaveba}\,(c)
shows that $h(B^{E_\obc}_s(0))=
B^{E_\obc}_s(0)$
for all $s\in \;]0,\sigma]$
and $h|_{B^{E_\obc}_s(0)}\colon
B^{E_\obc}_s(0)\to B^{E_\obc}_s(0)$
is an analytic diffeomorphism,
after possibly shrinking~$\sigma$.
By the same token,
we may assume that
$\Omega_s:=\mu^{-1}(B^{E_\obc}_s(0))$
is a submanifold of~$M$ for each $s\in \;]0,\sigma]$.
Then the sets $\Omega_s$ with $s\in \;]0,\sigma]$
form a basis of open neighbourhoods of~$p$ in~$N$,
and each of them is a centre
manifold around $p$ with respect to $f$,
a submanifold of~$M$, stable under $f$,
and $f|_{\Omega_s}\colon \Omega_s\to \Omega_s$
is a diffeomorphism.\vspace{1mm}

(a) After shrinking $M$ and $M_0$, we may assume that $M_1:=f(M_0)$ is open and $f$
an analytic diffeomorphism onto $M_1$.
Observe that $E_\obs\oplus E_\obc$ is a centre-stable
subspace with respect to
$T_p(f)$, and
$E_\obc\oplus E_\obu$ is a centre-stable
subspace with respect to
$T_p(f^{-1})=T_p(f)^{-1}$.
Hence Theorem \ref{csthm}
provides a centre-stable manifold $N_1$
around $p$ with respect to $f$, and a centre-stable manifold $N_2$
around $p$ with respect to $f^{-1}\colon M_1\to M$, which are submanifolds of $M$.
Since $N_1$ and $N_2$ are transversal at $p$,
after shrinking $N_2$, we may assume that $N_1\cap N_2$
is a submanifold of $N_1$ and hence of $M$
(retaining that $N_2$ is a centre-stable manifold
by means of Theorem \ref{csthm}\,(c)).
After shrinking $N_2$ further if necessary,
we may assume that $S:=f^{-1}(N_2)$ is open in $N_2$
and that $f^{-1}|_{N_2}\colon N_2\to S$ is a diffeomorphism
(by the inverse function theorem
and Theorem \ref{csthm}\,(c)).
Define $P:=N_1\cap S$.
Then $f(P)\sub N_1\cap N_2$
and since $N:=N_1\cap N_2$ is a submanifold of $M_0$,
the restriction of $f$ to a map $h\colon P \to N$
is analytic. By the proof of (c),
there exists an open subset $\Omega\sub N$
which is a centre manifold around $p$ with respect to $f$.\vspace{1mm}

(b) can be proved like Theorem \ref{csthm} (b)
once we know that also centre manifolds
can be described as the graph of a unique power series
in a given chart (the simple adaptation of the argument is left to
the reader).
The following proposition
shows that such a description is
always possible.\,\vspace{3mm}\Punkt

\noindent
We first fix the setting.
\begin{numba}\label{thesetaltc}
We consider an ultrametric Banach space $E$ over a complete
ultrametric field $(\K,|.|)$
and an automorphism $\alpha\colon E\to E$
for which there exists a centre subspace $E_2\sub E$.
We let $E_1$ be the stable subspace,
$E_3$ be the unstable subspace and $\|.\|$ be an ultrametric norm
on $E=E_1\oplus E_2\oplus E_3$ as described in Definition~\ref{defacentre}.
Thus $\alpha=A  \oplus B \oplus C$ in terms of automorphisms
of $E_1$, $E_2$ and $E_3$, respectively.
Let $r>0$ and $f=(f_1,f_2, f_3)\colon B_r^E(0)\to E$ be an analytic mapping such that
$f(0)=0$ and $f'(0)=\alpha$.
Thus
\[
f(x,y,z)=(A x, B y, C z)+\wt{f}(x,y, z)
\]
with an analytic function
$\wt{f}=(\wt{f}_1,\wt{f}_2,\wt{f}_3) \colon B_r^E(0)\to E$
such that $\wt{f}(0)=0$
and $\wt{f}\, '(0)=0$.
After shrinking $r$, we may assume
that $\wt{f}$ is Lipschitz with
%
\begin{equation}\label{ftlipbb}
\Lip(\wt{f})<1 \,.
\end{equation}
We may also assume that $\wt{f}_1(x,y,z)= \sum_{k=2}^\infty a_k(x,y,z)$,
\[
\wt{f}_2(x,y,z)\; =\; \sum_{k=2}^\infty b_k(x,y,z)
\quad
\mbox{ and }\quad \wt{f}_3(x,y,z) =\sum_{k=2}^\infty c_k(x,y,z)
\]
for all $x\in B^{E_1}_r(0)$, $y\in B^{E_2}_r(0)$
and $z\in B^{E_3}_r(0)$,
for suitable continuous homogeneous polynomials
$a_k \colon E\to E_1$,
$b_k\colon E\to E_2$
and $c_k \colon E\to E_3$
of degree $k$
such that
\[
\lim_{k\to\infty} \|a_k\|r^k=0\,,\quad
\lim_{k\to\infty}\|b_k\|r^k=0
\quad\mbox{ and }\quad
\lim_{k\to\infty} \|c_k\|r^k=0\,.
\]
After replacing $f(x)$ by $\lambda^{-1} f(\lambda x)$ with $0\not=\lambda\in \K$
sufficiently small, we can achieve that
\[
\|a_k\|,\|b_k\|, \|c_k\| < 1\quad\mbox{for all $\, k\geq 2$.}
\]
After decreasing $r$, we may assume that
$r\leq 1$. Then the following holds:
\end{numba}
%
%
%
%
\begin{prop}\label{thmloccsbb}
In the setting of {\rm\ref{thesetaltc}},
there exists a unique
analytic function
$\phi=(\phi_1,\phi_3)  \colon B^{E_2}_r(0)\to E_1\times E_3$
with properties {\rm(a)--(c)}:
\begin{itemize}
\item[\rm(a)]
$f(\Gamma_t)\sub \Gamma_r$
for some $t\in\;]0,r]$,
where
$\{(\phi_1(y),y,\phi_3(y))\colon y\in B^{E_2}_s(0)\}$ $=:\Gamma_s$
for $s\in \;]0, r]$;
\item[\rm(b)]
$\phi(0)=0$ and $\phi'(0)=0$
$($whence $T_0(\Gamma_r)=E_2)$; and
\item[\rm(c)]
There exist continuous homogeneous polynomials
$d_k\colon E_2\to E_1$ and $e_k \colon \!E_2\!\to \!E_3$
of degree $k$ with
$\|d_k\|, \!\|e_k\|\! < \!1$
and $\phi(y)\!=\!\sum_{k=2}^\infty (d_k(y),e_k(y))$ for all
$y\in B^{E_2}_r(0)$.
\end{itemize}
Moreover, $\phi(B_s^{E_1}(0))\sub B^{E_1\times E_3}_s(0)$
and $f(\Gamma_s) = \Gamma_s$,
for all $s\in \;]0,r]$.
\end{prop}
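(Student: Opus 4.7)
The plan is to mimic the construction of Proposition~\ref{thmloccs}, writing $\phi=(\phi_1,\phi_3)=\sum_{k\geq 2}(d_k,e_k)$ with $d_k\in\Pol^k(E_2,E_1)$ and $e_k\in\Pol^k(E_2,E_3)$, and to determine the coefficients recursively from the $f$-invariance identity for the graph. The three-way decomposition $E=E_1\oplus E_2\oplus E_3$ will produce two formal-series equations in place of the single one in Proposition~\ref{thmloccs}, namely
\[ f_j(\phi_1(y),y,\phi_3(y))=\phi_j\bigl(f_2(\phi_1(y),y,\phi_3(y))\bigr)\qquad\text{for }j\in\{1,3\}. \]
After substituting the Taylor expansions (formally legitimate via Lemma~\ref{quanticomp}) and comparing homogeneous terms at order $n\geq 2$, one arrives at linear equations
\[ (B^*-A_*)(d_n)=r_n,\qquad (B^*-C_*)(e_n)=s_n, \]
where $A_*,C_*$ denote pushforwards by $A,C$ and $B^*$ the pullback by $B$ on the relevant $\Pol^n$ spaces, and $r_n,s_n$ are built from $d_2,\ldots,d_{n-1}$, $e_2,\ldots,e_{n-1}$ and the Taylor coefficients $a_k,b_k,c_k$ of $f$.

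Next I would invert $B^*-A_*$ and $B^*-C_*$ with inverses of operator norm $\leq 1$. Since $B$ is an isometric automorphism of $E_2$ by Definition~\ref{defacentre}(b), $B^*$ is an isometric automorphism of each $\Pol^n(E_2,E_i)$. Writing $B^*-A_*=B^*(\id-(B^*)^{-1}A_*)$, the factor $\id-(B^*)^{-1}A_*$ is invertible with inverse of norm $\leq 1$ by~\ref{lininv}, since $\|(B^*)^{-1}A_*\|\leq\|A\|<1$ (Lemma~\ref{pbpfw} and Definition~\ref{defacentre}(c)). Similarly $B^*-C_*=-C_*(\id-C_*^{-1}B^*)$ is invertible, using $\|C_*^{-1}\|\leq\|C^{-1}\|<1$ and hence $\|C_*^{-1}B^*\|<1$. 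This invertibility will simultaneously yield existence and uniqueness of the formal coefficients $d_n,e_n$ at each order.

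The main technical obstacle is the inductive propagation of the bound $\|d_n\|,\|e_n\|<1$. The bookkeeping inequalities will parallel~(\ref{fibas}) and~(\ref{secbas}) in the proof of Proposition~\ref{thmloccs}: each summand of $r_n$ and $s_n$ is a product of some $\alpha_k,\beta_k,\gamma_k$ (all of norm $<1$) with lower-order $d_j,e_j$ (of norm $<1$ by the induction hypothesis) together with copies of the identity on $E_2$. What makes this delicate is that the absorbing factor $a^{k-n+1}$ available in Proposition~\ref{thmloccs} is gone here (we are in the case $a=1$), so the strictness of the inequalities must be propagated by hand via the ultrametric max-norm; one must check that no summand saturates the bound. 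Once $\|d_k\|,\|e_k\|<1$ and $r\leq 1$ are established, $\sum(d_k,e_k)$ has radius of strict convergence at least $1\geq r$, so $\phi$ is analytic on $B^{E_2}_r(0)$, giving (b) and (c).

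Finally, for (a) and the moreover statement, I would argue as follows. For $y\in B^{E_2}_s(0)$ with $s\leq r\leq 1$, the bounds $\|d_k\|,\|e_k\|<1$ together with $\|y\|<1$ force $\|\phi_1(y)\|,\|\phi_3(y)\|<\|y\|$. The domination principle then gives $\|(\phi_1(y),y,\phi_3(y))\|=\|y\|$, and since $\|\wt f_2(\phi_1(y),y,\phi_3(y))\|\leq\Lip(\wt f)\|y\|<\|y\|=\|By\|$ by~(\ref{ftlipbb}), the $E_2$-component $\tilde y:=By+\wt f_2(\phi_1(y),y,\phi_3(y))$ of $f(\phi_1(y),y,\phi_3(y))$ has norm exactly $\|y\|<s$, again by domination. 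Combined with the formal identity this yields $f(\Gamma_s)\sub\Gamma_s$. Surjectivity $f(\Gamma_s)=\Gamma_s$ will follow from the ultrametric inverse function theorem (Theorem~\ref{IFT}(b)) applied to $y\mapsto\tilde y$, whose linear part at $0$ is the isometry $B$ and whose nonlinear remainder has Lipschitz constant $<1=1/\|B^{-1}\|$, hence restricts to a bijection of each $B^{E_2}_s(0)$ onto itself.
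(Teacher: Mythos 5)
Your proposal is correct and follows essentially the same route as the paper's proof in Appendix~\ref{sec1app}: the two formal-series identities for the graph, the factorizations $B^*-A_*=B^*(\id-(B^{-1})^*A_*)$ and $C_*-B^*=C_*(\id-(C^{-1})_*B^*)$ inverted via~\ref{lininv}, and the isometry of $y\mapsto By+\wt f_2(\phi_1(y),y,\phi_3(y))$ (Remark~\ref{behaveba}/Theorem~\ref{IFT}\,(b)) for $f(\Gamma_s)=\Gamma_s$. The ``delicate'' bookkeeping you defer is exactly what the paper carries out in (\ref{neweqB})--(\ref{secbasbb}): every summand of the error terms contains at least one factor of norm $<1$ (an $\alpha_k,\beta_k,\gamma_k$ or a lower-order $\delta_k,\eta_k$) while all other factors, including $\zeta_1=B$, have norm $\leq 1$, so the ultrametric maximum keeps the strict bound.
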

Because the proof of Proposition~\ref{thmloccsbb}
is very similar to that of Proposition~\ref{thmloccs},
we relegate it to an appendix (Appendix \ref{sec1app}).
Note that Proposition~\ref{thmloccsbb}
also provides a second (more involved)
proof for the existence of centre manifolds.
The above transversality argument
can be re-used nicely in the $C^k$-case.
%
%
\begin{rem}\label{udecen}
$E_{a,\obs}$, $E_{a,\obc}$ and $E_{a,\obu}$
are uniquely determined
in the situation of Definition~\ref{defacentre}
(if they exist).
In fact, $E_{a,\obs}$ (resp., $E_{a,\obu}$) is the set of all $x\in E$ such that
$a^{-n}\|\alpha^n(x)\|\to 0$
as $n\to\infty$ (resp., as $n\to-\infty$),
and $E_{a,\obc}$ is the set of all $x\in E$ such that
$\{a^{-n}\|\alpha^n(x)\|\colon n\in \Z\}$
is bounded.
\end{rem}
\section{Mappings between sequence spaces}\label{secseq}
%
%
%
%
%
We now prove some basic facts concerning
Banach spaces of
sequences and non-linear mappings
between them. They will be used later
(in Section~\ref{secirw})
to construct local $a$-stable manifolds.\\[2.5mm]
For the moment, let $(\K,|.|)$ be a valued field.
%
\begin{defn}\label{defSaE}
If $(E,\|.\|)$ is a normed space
over $\K$ and $a$ a positive real number,
we let
$\cS_a(E)$
be the set of all sequences
$(x_n)_{n\in \N_0}$ in $E$ such that
$\lim_{n\to\infty} a^{-n}\|x_n\| =0$.
Clearly $\cS_a(E)$ is a vector subspace of
$E^{\N_0}$, and
\[
\|x\|_a\, :=\, \max\{a^{-n}\|x_n\|\colon n\in \N_0\}\quad
\mbox{for $\, x=(x_n)_{n\in \N_0}\in \cS_a(E)$}
\]
is a norm on $\cS_a(E)$.
Given a subset $U\sub E$,
we write
\[
\cS_a(U):=\{x=(x_n)_{n\in \N_0}\in \cS_a(E)\colon
(\forall n\in \N_0)\; x_n\in U\}\, .
\]
\end{defn}
\begin{rem}
The following assertions are obvious:
\begin{itemize}
\item[(a)]
If $|.|$ and $\|.\|$
are ultrametric, then also $\|.\|_a$
is ultrametric.
\item[(b)]
If $E$ is a Banach space, then also
$\cS_a(E)$ is a Banach space.
\item[(c)]
If $a\in \;]0,1]$,
then $\lim_{n\to\infty}x_n=0$ for all $x=(x_n)_{n\in \N_0}\in \cS_a(E)$.
\end{itemize}
\end{rem}
Our first lemma compiles various basic facts.
%
\begin{la}\label{basicsseq}
Let $(E,\|.\|_E)$ be a normed space over $(\K,|.|)$ and $a>0$.
\begin{itemize}
\item[\rm(a)]
The left shift $\lambda \colon
\cS_a(E)\to \cS_a(E)$, $\lambda(x):=(x_{n+1})_{n\in \N_0}$
for $x=(x_n)_{n\in \N_0}\in \cS_a(E)$,
and the right shift $\rho\colon \cS_a(E)\to\cS_a(E)$,
$\rho(x):=(0,x_0,x_1,\ldots)$
are continuous linear maps, of operator norm
%
\begin{equation}\label{opnormsh}
\|\lambda\|\leq a\quad \mbox{ and }\quad
\|\rho\|\leq a^{-1}\, .
\end{equation}
\item[\rm (b)]
For each $m\in \N_0$, the maps
$\pi_m\colon \cS_a(E)\to E$, $x=(x_n)_{n\in \N_0}\mto x_m$
and $\mu_m\colon E\to\cS_a(E)$,
$\mu_m(x):=(0,\ldots, 0,x,0,0,\ldots)$ $($with $m$ zeros at the beginning$)$
are continuous linear, of operator norm $\|\pi_m\|\leq a^m$
and $\|\mu_m\|\leq a^{-m}$.
\item[\rm(c)]
For each $m\in \N$, the map $\cS_a(E)\to E^m\times \cS_a(E)$,
\[
(x_n)_{n\in \N_0}\mto ((x_0,x_1,\ldots, x_{m-1}), (x_{m+n})_{n \in \N_0})
\]
is an isomorphism of topological vector spaces.
\item[\rm(d)]
If $a\in \;]0,1]$ and $U\sub E$ is an open $0$-neighbourhood,
then $\cS_a(U)$ is an open $0$-neighbourhood in $\cS_a(E)$.
\item[\rm(e)]
If also $(F,\|.\|_F)$ is a normed space over $\K$,
equip $E\oplus F$ $($which we treat as an internal direct sum$)$
with the maximum norm, $\|x+y\|:=\max\{\|x\|_E,\|y\|_F\}$
for $x\in E$, $y\in F$.
Then $\cS_a(E\oplus F)=\cS_a(E)\oplus \cS_a(F)$
and $\|x+y\|_a=\max\{\|x\|_a,\|y\|_a\}$
for all $x\in \cS_a(E)$ and $y\in \cS_a(F)$.
\end{itemize}
\end{la}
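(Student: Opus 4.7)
The lemma is a bundle of routine facts about the weighted sup-norm $\|x\|_a=\max_n a^{-n}\|x_n\|_E$, and my plan is to verify each item directly from the definitions, with almost no novel ideas required. Linearity of each map in (a), (b) and (c) is immediate, so in each case the work is to bound an operator norm or to identify topologies.

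For (a), I would compute
\[
\|\lambda(x)\|_a=\max_{n\in\N_0}a^{-n}\|x_{n+1}\|_E
= a\cdot\max_{n\in\N_0}a^{-(n+1)}\|x_{n+1}\|_E
\le a\|x\|_a,
\]
and symmetrically for $\rho$, using that the $0$-th coordinate of $\rho(x)$ vanishes so the max is over $n\ge 1$, which yields the factor $a^{-1}$. For (b), the estimates $\|\pi_m(x)\|_E=a^m(a^{-m}\|x_m\|_E)\le a^m\|x\|_a$ and $\|\mu_m(y)\|_a=a^{-m}\|y\|_E$ are visible by inspection. For (c), the map is linear and bijective by construction; continuity of both directions follows from (b) applied to the individual coordinates $\pi_0,\ldots,\pi_{m-1}$ and the shift-type map $x\mapsto(x_{m+n})_{n\in\N_0}$ (whose norm is at most $a^m$ by the argument of (a) iterated), and from $\mu_0,\ldots,\mu_{m-1}$ together with the right-shift by $m$ in the other direction.

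For (e), I would simply observe that a sequence $z=(x_n+y_n)_{n\in\N_0}$ in $E\oplus F$ lies in $\cS_a(E\oplus F)$ iff $a^{-n}\max\{\|x_n\|_E,\|y_n\|_F\}\to 0$, which by the definition of the maximum norm splits into the two conditions $x\in\cS_a(E)$ and $y\in\cS_a(F)$; the identity $\|x+y\|_a=\max\{\|x\|_a,\|y\|_a\}$ then follows by interchanging two $\max$'s over $n$ and over the two summands.

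The only step that uses something beyond symbol manipulation is (d), where the hypothesis $a\le 1$ enters. Choose $\delta>0$ with $B^E_\delta(0)\subseteq U$; I would claim that $B^{\cS_a(E)}_\delta(0)\subseteq \cS_a(U)$. Indeed, if $\|x\|_a<\delta$ then for every $n\in\N_0$ we have $\|x_n\|_E\le a^n\|x\|_a\le \|x\|_a<\delta$, using $a^n\le 1$ crucially; hence each $x_n\in U$, so $x\in\cS_a(U)$. This is the only spot where the hypothesis $a\in \;]0,1]$ is actually needed, and it is the main (albeit very small) obstacle: without it the individual terms could be large even though the sequence is admissible, and $\cS_a(U)$ would in general fail to be open.
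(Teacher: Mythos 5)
Parts (a), (b), (c) and (e) of your proposal are correct and follow essentially the same computations as the paper (the paper also writes the map in (c) as $(\pi_0,\ldots,\pi_{m-1},\lambda^m)$ with inverse built from the $\mu_j$ and $\rho^m$), so there is nothing to add there.

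In (d), however, there is a genuine gap: the lemma asserts that $\cS_a(U)$ is an \emph{open} $0$-neighbourhood, while your argument only shows that it contains the ball $B^{\cS_a(E)}_\delta(0)$, i.e.\ that it is a neighbourhood of $0$. Openness at an arbitrary point $x\in\cS_a(U)$ is the actual content of (d), and it is needed later (e.g.\ $\cS_a(U)$ is the open domain on which $\cS_a(f)$ is shown to be analytic in Proposition~\ref{propseqana}). Your ball-at-$0$ argument does not transfer to a general $x$, because the first finitely many coordinates $x_0,\ldots,x_m$ may lie anywhere in $U$ and need not be small, so one cannot place a uniform ball around them using only $B^E_\delta(0)\subseteq U$. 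The paper's proof closes exactly this point by means of the isomorphism from part (c): given $x\in\cS_a(U)$, choose $s>0$ with $B^E_s(0)\subseteq U$ and $m\in\N_0$ with $a^{-n}\|x_n\|_E<s$ for all $n>m$; then, under the identification $\cS_a(E)\cong E^{m+1}\times\cS_a(E)$, the set $U^{m+1}\times B^{\cS_a(E)}_s(0)$ is an open neighbourhood of $x$ contained in $\cS_a(U)$ (for the tail one uses precisely your estimate $\|y_n\|_E\le a^n\|y\|_a\le\|y\|_a<s$, which again requires $a\le 1$). So your computation is the right tail estimate, but the openness argument at arbitrary points — the step where part (c) and the openness of $U$ at the head coordinates enter — is missing and must be supplied.
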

\begin{proof}
(a) Given $x=(x_n)_{n\in \N_0}\in \cS_a(E)$,
\[
a^{-n}\|\lambda(x)_n\|=a^{-n}\|x_{n+1}\|=
a\cdot a^{-(n+1)}\|x_{n+1}\|\leq a \|x\|_a
\]
for each $n\in \N_0$. Hence $\| \lambda(x)\|_a \leq a \|x\|_a$
and thus $\|\lambda\|\leq a$. The second assertion can be shown
analogously.

(b) For $x=(x_n)_{n\in \N_0}$, we have $\|\pi_m(x)\|_E=\|x_m\|_E=a^ma^{-m}\|x_m\|_E \leq a^m\|x\|_a$.
Hence $\|\pi_m\|\leq a^m$. If $x\in E$, then $\|\mu_m(x)\|_a=a^{-m}\|x\|_E$
and thus $\|\mu_m\|\leq a^{-m}$.

(c) Using the continuous mappings introduced
in (a) and (b), the map in question can be written as $\Phi=(\pi_0,\ldots,\pi_{m-1}, \lambda^m)$.
It therefore is continuous linear. For $j\in \{0,\ldots, m-1\}$, let
$\pr_j\colon E^m\times \cS_a(E)\to E$
be the projection onto the $j$-th component (which we count starting with $0$).
Moreover, let\linebreak
$\pr_m\colon
E^m\times \cS_a(E)\to \cS_a(E)$ be the projection onto the final component.
Then $\Psi:=\sum_{j=0}^{m-1} \mu_j\circ \pr_j + \rho^m\circ \pr_m$
is continuous linear (by (a) and (b)),
and it is easy to see that $\Phi$ and $\Psi$ are the inverse mappings of one
another.

(d) Let $x=(x_n)_{n\in \N_0}\in \cS_a(U)$.
There is $s>0$ such that $B^E_s(0)\sub U$, and
$m\in \N_0$ such that $a^{-n}\|x_n\|_E<s$
for all $n>m$.
Then
$\|(x_{m+1+n})_{n\in \N_0}\|_a = a^{m+1}\max\{a^{-n}\|x_n\|_E\colon n>m\}
\leq \max\{a^{-n}\|x_n\|_E\colon n>m\}<s$.
Hence
$U^{m+1} \times B^{\cS_a(E)}_s(0)$
is an open neighbourhood of~$x$,
identifying $\cS_a(E)$ with $E^{m+1}\times \cS_a(E)$ (as in (c)).
Since $U^{m+1}\times B_s^{\cS_a(E)}(0)\sub \cS_a(U)$ and~$x$
was arbitrary,
it follows that $\cS_a(U)$ is open.

(e) follows from the fact that the maximum of
$\max\{a^{-n}\|x_n\|_E\colon n\in \N_0 \}$ and $\max \{a^{-n}\|y_n\|_F\colon  n\in \N_0 \}$
coincides with the maximum of the numbers $a^{-n}\max\{\|x_n\|_E,\|y_n\|_F\}$,
for $n\in \N_0$.
\end{proof}
Various types of maps
operate on sequence spaces.
%
%
\begin{la}\label{lamapsop}
Let $(E,\|.\|_E)$ and $(F,\|.\|_F)$ be normed spaces over $\K$ and $a>0$.
\begin{itemize}
\item[\rm(a)]
If $U\sub E$ is a subset such that $0\in U$
and $f\colon U\to F$ is a Lipschitz map such that
$f(0)=0$, then
$\cS_a(f)(x):=(f(x_n))_{n\in \N_0}\in \cS_a(F)$
for all $x=(x_n)_{n\in \N_0}\in \cS_a(U)$,
and the map $\cS_a(f)\colon \cS_a(U)\to \cS_a(F)$
so obtained is Lipschitz,
with $\Lip(\cS_a(f)) \leq \Lip(f)$.
\item[\rm(b)]
If $\alpha \colon \!E\!\to \!F$ is continuous linear, then
$\cS_a(\alpha)(x)\!:=\!(\alpha(x_n))_{n\in \N_0}\in \cS_a(F)$
for all $x=(x_n)_{n\in \N_0}\in \cS_a(E)$,
and the map $\cS_a(\alpha)\colon \cS_a(E)\to \cS_a(F)$
so obtained is continuous linear,
of operator norm $\|\cS_a(\alpha)\|\leq \|\alpha\|$.
\item[\rm(c)]
If $a\in \;]0,1]$ and
$p\colon E\to F$ is a continuous homogeneous polynomial
of degree $k\in \N$, then
$\cS_a(p)(x):=(p(x_n))_{n\in \N_0}\in \cS_a(F)$
for all $x=(x_n)_{n\in \N_0}\in \cS_a(E)$,
and the map $\cS_a(p)\colon \cS_a(E)\to \cS_a(F)$
so obtained is a\linebreak
continuous homogeneous polynomial of degree~$k$, of norm $\|\cS_a(p)\|\!\leq\! \|p\|$.
\end{itemize}
\end{la}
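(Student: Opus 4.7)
Part (a) is the key step and the other two parts will be deduced (or adapted) from it. The plan is to verify in each case that the sequence-valued map is well-defined (lands in $\cS_a(F)$) by a direct estimate on the $n$-th entry, and then to read off the operator-norm/Lipschitz/polynomial-norm bound from the same estimate.

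For (a), first I would use $f(0)=0$ together with the Lipschitz hypothesis to obtain $\|f(x_n)\|_F\leq \Lip(f)\,\|x_n\|_E$ for each $x=(x_n)_{n\in\N_0}\in \cS_a(U)$, so that $a^{-n}\|f(x_n)\|_F\leq \Lip(f)\cdot a^{-n}\|x_n\|_E\to 0$ as $n\to\infty$; this gives $\cS_a(f)(x)\in \cS_a(F)$. The same estimate applied to $f(x_n)-f(y_n)$ for $x,y\in\cS_a(U)$ yields $a^{-n}\|f(x_n)-f(y_n)\|_F\leq \Lip(f)\,\|x-y\|_a$ for every $n$, and taking the supremum over $n$ gives $\|\cS_a(f)(x)-\cS_a(f)(y)\|_a\leq \Lip(f)\,\|x-y\|_a$, hence $\Lip(\cS_a(f))\leq \Lip(f)$.

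Part (b) then follows quickly: the continuous linear map $\alpha$ is Lipschitz with $\Lip(\alpha)\leq \|\alpha\|$ and satisfies $\alpha(0)=0$, so (a) applies and yields both well-definedness and the bound $\|\cS_a(\alpha)\|\leq\|\alpha\|$; linearity of $\cS_a(\alpha)$ is immediate from the pointwise linearity of $\alpha$.

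For (c), the main obstacle is to exhibit $\cS_a(p)$ as coming from an element of $\cL^k(\cS_a(E),\cS_a(F))$; the hypothesis $a\in\;]0,1]$ enters here in an essential way. I would pick any $\beta\in\cL^k(E,F)$ with $p=\beta\circ\Delta_k^E$ and define $\tilde\beta\colon \cS_a(E)^k\to F^{\N_0}$ componentwise by $\tilde\beta(x^{(1)},\ldots,x^{(k)})_n:=\beta(x^{(1)}_n,\ldots,x^{(k)}_n)$. Writing $r_n^{(i)}:=a^{-n}\|x^{(i)}_n\|_E$ (so $r_n^{(i)}\to 0$ and $r_n^{(i)}\leq \|x^{(i)}\|_a$), the estimate
\[
a^{-n}\|\beta(x^{(1)}_n,\ldots,x^{(k)}_n)\|_F\;\leq\;\|\beta\|\,a^{-n}\prod_{i=1}^k\|x^{(i)}_n\|_E\;=\;\|\beta\|\,a^{(k-1)n}\prod_{i=1}^k r_n^{(i)}
\]
shows (using $a\leq 1$ so $a^{(k-1)n}\leq 1$) that the right-hand side is bounded by $\|\beta\|\prod_i\|x^{(i)}\|_a$ and tends to $0$; hence $\tilde\beta$ lands in $\cS_a(F)$, is $k$-linear, and satisfies $\|\tilde\beta\|\leq \|\beta\|$. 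By construction $\cS_a(p)=\tilde\beta\circ\Delta_k^{\cS_a(E)}$, so $\cS_a(p)\in \Pol^k(\cS_a(E),\cS_a(F))$ with $\|\cS_a(p)\|\leq\|\tilde\beta\|\leq\|\beta\|$; passing to the infimum over all admissible $\beta$ gives $\|\cS_a(p)\|\leq\|p\|$.
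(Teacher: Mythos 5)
Your proposal is correct and follows essentially the same route as the paper: part (a) by the direct entrywise Lipschitz estimate (the paper derives the difference estimate first and sets $y=0$ for well-definedness), part (b) by specializing (a) to the linear case, and part (c) by lifting a representing $\beta\in\cL^k(E,F)$ to a componentwise $k$-linear map on $\cS_a(E)^k$, using $a\le 1$ to absorb the extra powers of $a^{-n}$ and then passing to the infimum over $\beta$. No gaps.
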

\begin{proof}
(a) Given $x=(x_n)_{n\in \N_0}$ and $y=(y_n)_{n\in \N_0}$ in $\cS_a(U)$,
we have $a^{-n}\|f(x_n)-f(y_n)\|_F\leq a^{-n} \Lip(f)\|x_n-y_n\|_E$
for each $n\in \N_0$, showing that
$\cS_a(f)(x)-\cS_a(f)(y)\in \cS_a(F)$ and
$\|\cS_a(f)(x)-\cS_a(f)(y)\|_a\leq \Lip(f)\|x-y\|_a$.
Taking $y=0$, we obtain that $\cS_a(f)(x)\in \cS_a(F)$.
Thus $\cS_a(f)$ makes sense, and it is Lipschitz of constant $\leq \Lip(f)$
by the preceding estimate.

(b) By (a), $\cS_a(\alpha)$ makes sense.
Since, apparently,
the map $\cS_a(\alpha)$ is linear, its Lipschitz constant
(estimated in (a)) coincides with its operator norm.

(c) Let $\beta\in\cL^k(E,F)$ such that $p=\beta\circ\Delta^E_k$.
Write $\Delta_k$ for the diagonal map $\cS_a(E)\to\cS_a(E)^k$.
Given $x^1,\ldots, x^k\in \cS_a(E)$ with $x^j=(x^j_n)_{n\in \N_0}$,
we define $B(x^1,\ldots, x^k):=(\beta(x^1_n,\ldots,x^k_n))_{n\in\N_0}$.
Then
\begin{eqnarray}
a^{-n}\|\beta(x^1_n,\ldots,x^k_n)\|_F &\leq& a^{-n}\|\beta\|\cdot \|x^1_n\|_E\cdots\|x^k_n\|_E\notag\\
&\leq & \|\beta\|(a^{-n}\|x^1_n\|)\cdots (a^{-n}\|x^k_n\|)\label{newtg}\\
&\leq & \|\beta\|\cdot \|x^1\|_a\cdots\|x^k\|_a\,.\label{gvemulti}
\end{eqnarray}
Since the right hand side in (\ref{newtg}) tends to $0$ as $n\to \infty$,
we have $B(x^1,\ldots x^k)\in \cS_a(F)$.
Then~$B$ is $k$-linear and
$B \colon \cS_a(E)^k\to  \cS_a(F)$ has norm at most $\|\beta\|$,
because (\ref{gvemulti}) implies that
$\|B(x^1,\ldots, x^k)\|_a\leq \|\beta\|\cdot \|x^1\|_a\cdots\|x^k\|_a$.
Since $\cS_a(p)=B\circ \Delta_k$, the assertion~follows.
\end{proof}
%
%
%
\begin{prop}\label{propseqana}
Let $E$ and $F$ be ultrametric Banach spaces
over a complete ultrametric field $\K$
and $f\colon U\to F$ be an analytic mapping on an open
$0$-neighbourhood $U\sub E$,
such that $f(0)=0$.
Let $a\in \;]0,1]$.
Then
\[
\cS_a(f)(x):=(f(x_n))_{n\in \N_0}\in \cS_a(F)
\]
for all $x=(x_n)_{n\in \N_0}\in \cS_a(U)$,
and the map
\[
\cS_a(f)\colon \cS_a(U)\to \cS_a(F)
\]
so obtained is analytic.
\end{prop}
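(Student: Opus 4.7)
The plan is to proceed in three stages: verify that $\cS_a(f)$ maps into $\cS_a(F)$, establish analyticity at the origin under a global Taylor representation, and then reduce the general case via a shift decomposition. For the first stage, since $f$ is analytic with $f(0)=0$, Remark \ref{remstrict}(d) yields $r_0>0$ and a Lipschitz constant $C$ for $f|_{B_{r_0}^E(0)}$. For $x=(x_n)\in\cS_a(U)$ the inequality $\|x_n\|\leq a^n\|x\|_a$ together with $a\leq 1$ forces $x_n\to 0$, so $x_n\in B_{r_0}^E(0)$ for $n\geq N_0$, whence $a^{-n}\|f(x_n)\|\leq C\cdot a^{-n}\|x_n\|\to 0$.

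For the second stage, I would assume in addition that $f(y)=\sum_{k=1}^\infty p_k(y)$ on some ball $B^E_r(0)\sub U$, with $\|p_k\|r^k\to 0$ (the $k=0$ term is absent because $f(0)=0$). Lemma \ref{lamapsop}(c) provides $\cS_a(p_k)\in\Pol^k(\cS_a(E),\cS_a(F))$ with $\|\cS_a(p_k)\|\leq\|p_k\|$, so the formal series $\sum_k\cS_a(p_k)$ has radius of strict convergence at least $r$ and defines an analytic $\cS_a(F)$-valued function on $B^{\cS_a(E)}_r(0)$. For any $x$ in this ball, $\|x_n\|\leq\|x\|_a<r$ places each $x_n$ in $B^E_r(0)$, and a component-wise comparison shows the sum equals $\cS_a(f)(x)$ in $\cS_a(F)$; this yields analyticity at $0$.

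For the third stage, given arbitrary $x\in\cS_a(U)$ I pick $r>0$ with $B^E_r(0)\sub U$ such that $f|_{B^E_r(0)}$ is given by its Taylor series around $0$. The identity $\|\lambda^{N+1}(x)\|_a=a^{N+1}\sup_{m>N}a^{-m}\|x_m\|$ combined with $a^{-m}\|x_m\|\to 0$ permits choosing $N$ so large that $\lambda^{N+1}(x)\in B^{\cS_a(E)}_r(0)$. Under the topological linear isomorphism $\Phi\colon\cS_a(E)\to E^{N+1}\times\cS_a(E)$ of Lemma \ref{basicsseq}(c), the map $\cS_a(f)$ is conjugate to $((y_0,\ldots,y_N),z)\mapsto((f(y_0),\ldots,f(y_N)),\cS_a(f)(z))$; the first factor is analytic at $(x_0,\ldots,x_N)\in U^{N+1}$ by analyticity of $f$, and the second at $\lambda^{N+1}(x)$ by the previous stage, so the product is analytic at $\Phi(x)$ and hence, pulling back, $\cS_a(f)$ is analytic at $x$. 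The main obstacle is the bookkeeping in Stage~2: controlling the radius of strict convergence of $\sum_k\cS_a(p_k)$ by that of $\sum_k p_k$ and verifying that the two sums coincide pointwise. Both follow cleanly from the estimate $\|\cS_a(p_k)\|\leq\|p_k\|$ in Lemma \ref{lamapsop}(c), which itself relies crucially on the standing assumption $a\leq 1$.
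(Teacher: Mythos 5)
Your proposal is correct and follows essentially the same route as the paper: analyticity at $0$ via the polynomials $\cS_a(p_k)$ and the bound $\|\cS_a(p_k)\|\leq\|p_k\|$ from Lemma \ref{lamapsop}\,(c), then analyticity at an arbitrary point by splitting off finitely many coordinates through the isomorphism $\cS_a(E)\isom E^{m+1}\times\cS_a(E)$ of Lemma \ref{basicsseq}\,(c) and writing $\cS_a(f)$ as $f\times\cdots\times f\times h$. Your separate Stage~1 (well-definedness via a local Lipschitz estimate) is a harmless addition that the paper handles implicitly through the same decomposition.
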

\begin{proof}
There are $s>0$ and polynomials
$p_k\in\Pol^k(E,F)$ for $k\in \N$ such that $\lim_{k\to\infty} \|p_k\|s^k=0$,
$B^E_s(0)\sub U$
and $f(x)=\sum_{k=1}^\infty p_k(x)$ for all $x\in B^E_s(0)$.
Then $\cS_a(p_k)\colon \cS_a(E)\to \cS_a(F)$ is a continuous
homogeneous polynomial for each $k\in \N$
(by Lemma \ref{lamapsop}\,(c)).
Since, moreover, $\|\cS_a(p_k)\|\leq \|p_k\|$,
we see that $\lim_{k\to\infty}\|\cS_a(p_k)\|s^k=0$.
Then $\sum_{k=1}^\infty \cS_a(p_k)$
defines an analytic function $B^{\cS_a(E)}_s(0)\to \cS_a(F)$
(by \cite[4.2.4]{Bo1}).
Since this function coincides with
\begin{equation}\label{auxilh}
h:=\cS_a(f)|_{B^{\cS_a(E)}_s(0)}\, ,
\end{equation}
we see that $\cS_a(f)$ is analytic on $B^{\cS_a(E)}_s(0)$.

Now let $x=(x_n)_{n\in \N_0}\in \cS_a(U)$ be arbitrary.
There is $m\in \N_0$ such that $a^{-n}\|x_n\|_E<s$
for all $n>m$.
Identifying $\cS_a(E)$ with $E^{m+1}\times \cS_a(E)$ (as in Lemma \ref{basicsseq}\,(c)),
we may consider $U^{m+1} \times B^{\cS_a(E)}_s(0)$
as an open neighbourhood of~$x$ in~$\cS_a(U)$.
Considered as a mapping
\[
U^{m+1} \times B^{\cS_a(E)}_s(0)\to F^{m+1}\times \cS_a(F)\isom\cS_a(F)\,,
\]
$\cS_a(f)$ coincides with the analytic map
$f\times\cdots \times f\times h$
(which involves $m+1$ factors $f$ at the beginning, and the map $h$ from (\ref{auxilh})).
Thus $\cS_a(f)$ is analytic on an open neighbourhood of $x$
and hence analytic (as $x$ was arbitrary).
\end{proof}
\section{Construction of local stable manifolds}\label{secirw}
%
%
We construct local stable manifolds
by an adaptation of Irwin's method.
%
\begin{numba}\label{thesitu}
Let $(E,\|.\|)$ be an ultrametric Banach space over a complete ultrametric field $(\K,|.|)$,
such that $E=E_1\oplus E_2$ as a topological vector space
with closed vector subspaces $E_1$ and $E_2$, and
such that
%
\begin{equation}\label{thussup2}
\|x+y\|=\max\{\|x\|,\|y\|\}\quad\mbox{for all $x\in E_1$, $y\in E_2$.}
\end{equation}
We interpret $E$ also as the direct product
$E_1\times E_2$, in which case its elements are written as pairs
$(x,y)$.
Given $r>0$,
we have
$B_r^E(0)=B_r^{E_1}(0)\times B_r^{E_2}(0)$, by (\ref{thussup2}).
Let $f=(f_1,f_2)\colon B_r^E(0)\to E=E_1\times E_2$
be an analytic map such that
$f(0)=0$ and $f'(0)$ leaves $E_1$ and $E_2$ invariant.
Thus
\[
f'(0)=A\oplus B
\]
with certain continuous linear maps $A\colon E_1\to E_1$
and $B\colon E_2\to E_2$. Let $a\in \;]0,1]$.
We assume that
%
\begin{equation}\label{goodA2}
\|A\|< a
\end{equation}
and we assume that $B$ is invertible, with
%
\begin{equation}\label{goodC2}
\frac{1}{\|B^{-1}\|}>a \,.
\end{equation}
Then
\[
f(x,y)=(A x, B y)+\wt{f}(x,y)
\]
determines an analytic function
$\wt{f}=(\wt{f}_1,\wt{f}_2) \colon B_r^E(0)\to E$
such that $\wt{f}(0)=0$
and $\wt{f}\, '(0)=0$.
By Remark~\ref{remstrict}\,(d),
after shrinking $r$ if necessary, we may assume
that $\wt{f}$ is Lipschitz with
%
\begin{equation}\label{ftlip2}
\Lip(\wt{f})<a\, .
\end{equation}
\end{numba}
Then we have:
%
%
%
\begin{thm}\label{mainthm}
In the situation of {\rm\ref{thesitu}}, the set
\begin{eqnarray}
\Gamma & :=& \{ z\in B_r^E(0)\colon \mbox{$f^n(z)$
is defined and $a^{-n}\|f^n(z)\|< r$
for}\notag \\
& & \;\; \mbox{all $n\in \N_0$, and $\lim_{n\to\infty} a^{-n} \| f^n(z) \| =0$}\}\label{thebiggam}
\end{eqnarray}
has the following properties:
\begin{itemize}
\item[\rm (a)]
$f(\Gamma)\sub \Gamma$, i.e., $\Gamma$ is $f$-invariant.
\item[\rm (b)]
$\Gamma$
is the graph
of an analytic
map $\phi\colon B_r^{E_1}(0)\to B_r^{E_2}(0)$
with $\phi(0)=0$
and $\phi'(0)=0$
$($and hence $T_0 \Gamma=E_1)$.
\end{itemize}
Moreover, the following holds:
\begin{itemize}
\item[\rm(c)]
$\phi$ is Lipschitz, with $\Lip(\phi)\leq 1$.
\item[\rm(d)]
$\Gamma$ is independent of the choice of $a$ such that $\|A\|<a<\frac{1}{\|B^{-1}\|}$
and $\Lip(\wt{f})<a$.
\item[\rm (e)]
$\Gamma\cap B_s^E(0)$ is $f$-invariant
for each $s \in \;]0,r]$
and has properties analogous to those of $\Gamma$
described in {\rm(\ref{thebiggam})} and {\rm(b)} if we replace
$r$ with~$s$ there.
\item[\rm (f)]
For each $b>0$ such that $\|A\|\leq b< \frac{1}{\|B^{-1}\|}$
and $\Lip(\wt{f})\leq b$, we have
\begin{eqnarray*}
\Gamma
& = & \{z\!\in \! B^E_r(0) \colon \! \mbox{$f^n(z)$ is defined and
$\|f^n(z)\|  \leq\!  \, b^n \hspace*{.2mm} r$
for all $n  \in   \N_0$}\}\\
& = & \{z\!\in \! B^E_r(0) \colon \! \mbox{$f^n(z)$ is defined and
$\|f^n(z)\|\!\leq\!  b^n \|z\|$
for all $n\! \in \! \N_0$}\}.
\end{eqnarray*}
\item[\rm(g)]
$\|f(z)\|\leq c\|z\|$ for each $z\in \Gamma$,
where $c:=\max\{\|A\|,\Lip(\wt{f}_1)\}<a$.
\item[\rm(h)]
If $A$ is invertible and $\Lip(\wt{f}_1)<\frac{1}{\|A^{-1}\|}$,
then $f(\Gamma)$ is open in $\Gamma$
and $f|_\Gamma\colon \Gamma\to f(\Gamma)$ is a
diffeomorphism.
\end{itemize}
\end{thm}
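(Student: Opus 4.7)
The plan is to follow the Irwin-type strategy sketched in the introduction: reformulate $\Gamma$ as the zero-set (of a translated) analytic map between the sequence spaces of Section~\ref{secseq}, and solve it via the Ultrametric Inverse Function Theorem~\ref{IFT}. The key observation is that, since $\Lip(\wt f)<a\leq 1$, a point $z\in B_r^E(0)$ belongs to $\Gamma$ if and only if its forward orbit $(f^n(z))_{n\in\N_0}$ defines an element of $\cS_a(B_r^E(0))$; thus $\Gamma$ is in bijection with the set of admissible orbits, and parametrising these orbits by their initial $E_1$-coordinate will produce the graphing function~$\phi$.

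Using the splitting $\cS_a(E)=\cS_a(E_1)\oplus \cS_a(E_2)$ (Lemma~\ref{basicsseq}(e)) and the shifts $\lambda$, $\rho$, $\mu_0$ from Lemma~\ref{basicsseq}, I would introduce the analytic map $H\colon \cS_a(B_r^E(0))\to \cS_a(E)$ defined, for $\omega=(\omega_1,\omega_2)\in\cS_a(E_1)\oplus\cS_a(E_2)$, by
\[
H(\omega):=\bigl(\omega_1-\rho\,\cS_a(A)\omega_1-\rho\,\cS_a(\wt f_1)(\omega),\; \omega_2-\cS_a(B^{-1})\lambda\omega_2+\cS_a(B^{-1}\circ\wt f_2)(\omega)\bigr).
\]
The twist by $B^{-1}$ converts the expanding recurrence $\omega_{n+1,2}=B\omega_{n,2}+\wt f_2(\omega_n)$ into one that is contracting when read inside $\cS_a(E_2)$. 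A direct unfolding of indices shows that, for $x\in B_r^{E_1}(0)$, the equation $H(\omega)=(\mu_0(x),0)$ is equivalent to $\omega$ being an orbit of $f$ with $\omega_{0,1}=x$; in particular, its solutions correspond bijectively to points of $\Gamma$ with prescribed $E_1$-coordinate. Analyticity of $H$ on the open set $\cS_a(B_r^E(0))$ (open by Lemma~\ref{basicsseq}(d)) follows from Proposition~\ref{propseqana} together with continuity of the shifts.

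The derivative is $H'(0)=\id-L$ with $L=L_1\oplus L_2$, where $L_1:=\rho\circ\cS_a(A)$ and $L_2:=\cS_a(B^{-1})\circ\lambda$. Lemma~\ref{basicsseq}(a) and Lemma~\ref{lamapsop}(b) give the crucial bounds $\|L_1\|\leq a^{-1}\|A\|<1$ and $\|L_2\|\leq a\|B^{-1}\|<1$, so by~\ref{lininv} the operator $H'(0)$ is an isometric automorphism of $\cS_a(E)$ with $\|H'(0)^{-1}\|\leq 1$. Combined with $\Lip(\wt f)<a$ and the same shift estimates, one obtains $\Lip(\wt H)<1$ for the nonlinear remainder $\wt H:=H-H'(0)$. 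Theorem~\ref{IFT} thus applies on $B^{\cS_a(E)}_r(0)\sub \cS_a(B_r^E(0))$, giving $H$ as an analytic diffeomorphism of this ball onto itself (since $H'(0)$ is isometric). The composition $x\mapsto \pi_0\bigl(H^{-1}(\mu_0(x),0)\bigr)$ is then the desired analytic section $x\mapsto (x,\phi(x))$ whose image is $\Gamma$; (a) (shift-invariance of orbits) and (b) ($\phi(0)=0$, $\phi'(0)=0$ from $H(0)=0$) are immediate, (c) follows from Theorem~\ref{IFT}(b) together with (\ref{domi}) and (\ref{thussup2}), and (e) by repeating the construction on $B_s^E(0)$. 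The triple of characterisations in (d) and (f) is reconciled by varying $a$ and $b$ in the admissible ranges: the inductive estimate $\|f^n(z)\|\leq b^n\|z\|$ for $z\in\Gamma$ is deduced from $\|f(z)\|\leq c\|z\|$ (also giving~(g)) via the domination principle, and conversely any orbit satisfying $\|f^n(z)\|\leq b^n r$ with $b<a$ lies automatically in $\cS_a(E)$. Finally, (h) follows by applying Theorem~\ref{IFT} to $f|_\Gamma$ read in the chart $x\mapsto(x,\phi(x))$, using $\Lip(\wt f_1)<1/\|A^{-1}\|$ and $\Lip(\phi)\leq 1$.

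The main obstacle is the twin estimates $\|H'(0)^{-1}\|\leq 1$ and $\Lip(\wt H)<1$; once these are in hand, everything else flows formally from Theorem~\ref{IFT}. A secondary subtlety is the reconciliation of the three descriptions of $\Gamma$ in (d) and (f), which requires careful bookkeeping of strict versus non-strict inequalities and a mild approximation argument as $b\nearrow a$.
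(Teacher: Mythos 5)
Your proposal is correct and follows essentially the same route as the paper: your map $H$ is exactly the operator $G=\id_\cU - g$ used in the paper's proof (same sequence space $\cS_a(E)$, same shift/twist by $B^{-1}$, same isometry and Lipschitz estimates feeding into Theorem~\ref{IFT}, same identification of solutions of $H(\omega)=(\mu_0(x),0)$ with $f$-orbits of points of $\Gamma$), and parts (a), (c), (e), (h) are handled as in the paper. The only minor deviations are cosmetic: you obtain the bound $\|f^n(z)\|\leq b^n\|z\|$ in (f) by iterating (g) rather than the paper's limiting argument $c\to b$, $s\to\|z\|$ (both work, given the graph description), and $\phi'(0)=0$ needs the short Neumann-series computation with the diagonal operator $H'(0)=\id-(L_1\oplus L_2)$ that you have already set up, not merely $H(0)=0$.
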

\begin{proof}
(a) is obvious from the definition of $\Gamma$.

(b) Identifying $\cS_a(E)$ with $\cS_a(E_1)\times \cS_a(E_2)$
as in Lemma \ref{basicsseq}\,(e), we can write elements $z=(z_n)_{n\in \N_0}\in \cS_a(E)$
in the form $z=(x,y)$ with $x=(x_n)_{n\in \N_0}\in \cS_a(E_1)$
and $y=(y_n)_{n\in \N_0}\in \cS_a(E_2)$.
We abbreviate $\cU:=B^{\cS_a(E)}_r(0)$
and consider the map $g\colon \cU\to\cS_a(E)$
taking $z=(z_n)_{n\in \N_0}\in \cU$ with $z_n=(x_n ,y_n)$
to the sequence $g(z)$ with $n$-th entry
%
\begin{equation}\label{dfnsmg}
g(z)_n:=
\left\{
\begin{array}{cl}
\big(0,\, B^{-1}(y_1-\wt{f}_2(z_0)) \big) &\mbox{if $\, n=0$;}\\
\big(f_1(z_{n-1}),\, B^{-1}(y_{n+1}-\wt{f}_2(z_n))\big) & \mbox{if $\, n\geq 1$}
\end{array}
\right.
\end{equation}
for $n\in \N_0$.
Then $g(0)=0$.
Using the right shift $\rho$ on $\cS_a(E_1)$,
the left shift~$\lambda$ on $\cS_a(E_2)$ and
the projection $\pr_2\colon E=E_1\oplus E_2\to E_2$,
we can write~$g$ as
\[
g\,=\,  \big( \rho\circ \cS_a(f_1),\, \cS_a(B^{-1})\circ (\lambda\circ \cS_a(\pr_2)-\cS_a(\wt{f}_2))\big)\,.
\]
In view of Lemma \ref{basicsseq} (a) and~(e), Lemma \ref{lamapsop}\,(a)
and Proposition \ref{propseqana}, this formula
shows that $g$ is analytic and Lipschitz with $\Lip(g)<1$.
Then
\[
G\, :=\, \id_\cU-g\colon \cU\to\cU
\]
is an analytic diffeomorphism and an isometry,
by the Ultrametric Inverse Function Theorem (Theorem \ref{IFT})
and the domination principle~(\ref{domi}).
Now
\[
w\colon B_r^{E_1}(0)\to\cU\,,\quad
w(x):= G^{-1}\big( (x,0),(0,0), \ldots\big)
\]
is an analytic map such that
$(\id_\cU-g)(w(x))=G(w(x))=((x,0),(0,0),\ldots)$
and thus
%
\begin{equation}\label{leftright}
w(x)=\big((x,0),(0,0),\ldots\big) + g(w(x))
\end{equation}
for all $x\in B^{E_1}_r(0)$. Comparing the $0$-th component
on both sides of (\ref{leftright}),
we see that $w(x)_0=(x,0)+\big(0,B^{-1}(\pr_2(w(x)_1)-\wt{f}_2(w(x)_0))\big)$
and thus
\[
w(x)_0=(x,\phi(x))\,,
\]
where $\phi\colon B^{E_1}_r(0)\to B^{E_2}_r(0)$ is the analytic function
given by
\begin{equation}\label{defirwphi}
\phi(x):=B^{-1}(\pr_2(w(x)_1)-\wt{f}_2(w(x)_0))\,.
\end{equation}
Then $\phi(0)=0$,
and since
\[
g'(0)=\big(\rho\circ \cS_a(A)\circ \cS_a(\pr_1),\,  \cS_a(B^{-1})\circ \lambda \circ \cS_a(\pr_2))=D_1\oplus D_2
\]
with $D_1:= \rho\circ \cS_a(A)\in\cL(\cS_a(E_1))$ and
$D_2:=\cS_a(B^{-1})\circ \lambda
\in \cL(\cS_a(E_2))$
of operator norm $<1$
(cf.\ proof of Proposition \ref{propseqana}),
we have
\[
(G^{-1})'(0)=(G'(0))^{-1}=(\id - (D_1\oplus D_2))^{-1}=\id+ \sum_{k=1}^\infty D^k_1\oplus D_2^k
\]
and thus
$w'(0).y=(G^{-1})'(0).\big((y,0),(0,0),\ldots\big)=
((y,0),(Ay,0),(A^2y,0),\ldots)$
for all $y\in E_1$, entailing that
\[
\phi'(0)=0\,.
\]
\emph{We claim that $w(x)$ is the $f$-orbit of $w(x)_0=(x,\phi(x))$,
for each $x\in B_r^{E_1}(0)$.} If this is true,
then $w(x)\in \cU$
implies that $a^{-n}\|f^n(x,\phi(x))\|=a^{-n}\|w(x)_n\|$
is $<r$ and tends to $0$ as $n\to \infty$,
showing that $(x,\phi(x))\in \Gamma$
and hence
\[
\mbox{graph}(\phi)\sub \Gamma\,.
\]
To prove the claim, we need only show
that $w(x)_{n+1}=f(w(x)_n)$ for each $n\in \N_0$
(because $w(x)_0=(x,\phi(x))$).
Looking at the second component of the entry in (\ref{leftright}) indexed
by $n$ and the first component of the entry indexed by $n+1$,
we see that
%
\begin{equation}\label{nothpart}
\pr_2(w(x)_n)=B^{-1}(\pr_2(w(x)_{n+1})-\wt{f}_2(w(x)_n)
\end{equation}
and $\pr_1(w(x)_{n+1})= f_1(w(x)_n)$.
Multiplying (\ref{nothpart})
with $B$, we obtain
\[
B\pr_2(w(x)_n)=\pr_2(w(x)_{n+1})-\wt{f}_2(w(x)_n)\, ,
\]
whence
$\pr_2(w(x)_{n+1})=B\pr_2(w(x)_n)
+\wt{f}_2(w(x)_n)=f_2(w(x)_n)$.
Thus $w(x)_{n+1}=f(w(x)_n)$, confirming the claim.

\noindent
For a full proof of (b), it only remains to show
that $\Gamma\sub \mbox{graph}(\phi)$.
To prove this inclusion, let $z_0=(x_0,y_0)\in \Gamma$.
Then $z:=(f^n(z_0))_{n\in \N_0}
\in \cU$ (since $a^{-n}\|f^n(z_0)\|<r$
and $a^{-n}\|f^n(z_n)\|\to 0$ by definition of $\Gamma$).
We claim that
%
\begin{equation}\label{convdir}
z=((x_0,0),(0,0),\ldots)+g(z)\,.
\end{equation}
If this is true, then $G(z)=z-g(z)=((x_0,0),(0,0),\ldots)$
and hence $z=G^{-1}((x_0,0),(0,0),\ldots)=w(x_0)$.
As a consequence, $z_0=w(x_0)_0=(x_0,\phi(x_0))$
and thus $z_0\in \mbox{graph}(\phi)$.
To prove the claim, note first that $\pr_1(z_0)=x_0$,
which is also the first component
of the $0$-th entry of the right hand side of~(\ref{convdir}).
Given $n\in \N$,
equality of the first component of the index $n$ entry
of the sequences on the left and right of (\ref{convdir})
means that
\[
\pr_1(z_n)=f_1(z_{n-1})\,,
\]
which is valid since $z_n=f^n(z_0)=f(f^{n-1}(z_0))=f(z_{n-1})$.
Next, we use that $\pr_2(z_n)=\pr_2 (f(z_{n-1}))=B\pr_2(z_{n-1}) + \wt{f}_2(z_{n-1})$,
whence $B\pr_2(z_{n-1})=\pr_2(z_n)-\wt{f}_2(z_{n-1})$
and hence
\[
\pr_2(z_{n-1})=
B^{-1}(\pr_2(z_n)-\wt{f}_2(z_{n-1}))\,.
\]
Therefore the second
components of the $(n-1)$st entries of the sequences on the left and right
of (\ref{convdir}) coincide.
As $n$ was arbitrary, (\ref{convdir}) holds.

(c) Given $x,y\in B^{E_1}_r(0)$, we have
\begin{eqnarray*}
\|\phi(x)\!-\!\phi(y)\| \!& \!\leq\!&\!
\|(x,\phi(x))\!-\!(y,\phi(y))\| = \|w(x)_0\!-\!w(y)_0\|  \leq \|w(x)\!-\!w(y)\|_a\\
\!&\!=\!&\! \|G^{-1}((x,0),(0,0),\ldots)-G^{-1}((y,0),(0,0),\ldots)\|_a\\
\!& \! = \! &\!  \|((x-y,0),(0,0),\ldots)\|_a =  \|x-y\|,
\end{eqnarray*}
using that $G$ is an isometry.

(d)  Let $b$ be a real number such that $\|A\|<b<\frac{1}{\|B^{-1}\|}$
and $\Lip(\wt{f})<b$; after interchanging $a$
and $b$ if necessary, we may assume that $b\leq a$.
Let $\Gamma'$ be the set of all $z\in B_r^E(0)$ such that
$f^n(z)$ is defined for all $n\in \N_0$,
$b^{-n}\|f^n(z)\|<r$ and $b^{-n}\|f^n(z)\|\to 0$ as $n\to\infty$.
Then also $a^{-n}\|f^n(z)\|<r$ and $a^{-n}\|f^n(z)\|\to 0$,
whence $z\in \Gamma$. Thus $\Gamma'\sub \Gamma$.
Since each of $\Gamma'$ and $\Gamma$
is the graph of a function on the same domain
(by (b)), it follows that $\Gamma=\Gamma'$.

(e) Given $s\in \;]0,r]$, let $\Gamma_s$ be the set
of all $z\in B^E_s(0)$ such that $(f|_{B^E_s(0)})^n(z)$ is defined for
all $n\in \N_0$, $a^{-n}\|f^n(z)\|<s$, and $a^{-n}\|f^n(z)\|\to 0$ as $n\to\infty$.
Applying (b) to $f|_{B^E_s(0)}$ instead of $f$,
we find that $\Gamma_s$ is the graph of a function
$\psi\colon B^{E_1}_s(0)\to B^{E_2}_s(0)$.
Since $\Gamma_s\sub \Gamma$, it follows
that $\psi$ is the restriction of $\phi$ to $B^{E_1}_s(0)$
and $\Gamma\cap B^E_s(0)=\Gamma_s$.
Since $\Gamma_s$ has been obtained in the same way
as $\Gamma$, it has analogous properties.

(f) After replacing $a$ by an element in $\; ]b,\frac{1}{\|B^{-1}\|}[$
(which is legitimate by~(d)), we may assume that $b<a<\frac{1}{\|B^{-1}\|}$.
Given $z\in \Gamma$,
let $c\in \;]b,\frac{1}{\|B^{-1}\|}[$ and $s\in \;] \|z\|,r]$.
Then $z\in \Gamma_s$ (by (e) and its proof),
and since the latter can be obtained using $c$ in place of $a$ (by (d)),
it follows that $\|f^n(z)\|< c^n s$ for each $n\in \N_0$.
Letting $c\to b$ and $s\to \|z\|$,
it follows that $\|f^n(z)\|\leq b^n\|z\|$ for each
$n\in \N_0$.
Hence $z$ belongs to the final set described in (f).
This set, in turn, is a subset of the penultimate set
occurring in (f) (as is clear from the definition of these sets).
To complete the proof, let $z\in B_r^E(0)$ be in the penultimate set;
thus $f^n(z)$ is defined for each $n\in \N_0$ and $\|f^n(z)\|\leq b^nr$,
for each $n\in \N_0$.
Then $a^{-n}\|f^n(z)\|\leq \Big(\frac{b}{a}\Big)^n r$, where $\frac{b}{a}<1$.
Thus $a^{-n}\|f^n(z)\|< r$ and $a^{-n}\|f^n(z)\|\to 0$,
whence $z\in \Gamma$. Therefore all three sets in (f)
coincide.

(g) For each $z=(x,\phi(x))\in \Gamma$,
we have
%
\begin{equation}\label{maybeusf}
\|z\|=\max\{\|x\|,\|\phi(x)\|\}=\|x\|\,,
\end{equation}
because $\phi(0)=0$ and $\Lip(\phi)\leq 1$ (by (c)).
Since $f(z)\in\Gamma$ by~(a),
we deduce that $\|f(z)\|=\|(f_1(z),\phi(f_1(z)))\|=\|f_1(z)\|
=\|A x+\wt{f}_1(z)\|\leq \max\{\|Ax\|,\|\wt{f}_1(z)\|\}\leq c\|x\|$,
using that $\|\wt{f}_1(z)\|\leq \Lip(\wt{f}_1)\|z\|=
\Lip(\wt{f}_1)\|x\|$.

(h) The map $\kappa\colon \Gamma\mto B^{E_1}_r(0)$,
$(x,y)\mto x$ is a global chart for the submanifold $\Gamma$ of $E$,
with inverse $\kappa^{-1}\colon B^{E_1}_r(0)\to\Gamma$,
$x\mto (x,\phi(x))$.
The map
$\kappa\circ f\circ\kappa^{-1}
\colon B^{E_1}_r(0)\to E_1$
takes $x\in B^{E_1}_r(0)$ to
\[
(\kappa\circ f\circ\kappa^{-1})(x)
=f_1(x,\phi(x))=Ax+\wt{f}_1(x,\phi(x))
\]
with $\Lip(\wt{f}_1\circ (\id,\phi))\leq\Lip(\wt{f}_1)<\frac{1}{\|A^{-1}\|}$.
By the Ultrametric Inverse Function Theorem
(Theorem~\ref{IFT}),
$\kappa\circ f\circ\kappa^{-1}$ has open image
and is a diffeomorphism onto its image,
from which the assertion follows.
\end{proof}
\begin{rem}
Note that the mere existence of an analytic function $\phi$ with
$\mbox{graph}(\phi)\sub \Gamma$ remains valid if $B$ is not invertible,
but merely admits a right inverse $C$ such that $\frac{1}{\|C\|}>a$
(simply replace $B^{-1}$ by $C$ in the proof of (b)).
However, if $B$ is not invertible,
it can happen that $\mbox{graph}(\phi)$
is a proper subset of $\Gamma$
(as the following  example shows).
Thus Wells \cite{Wel} was mistaken when
he thought that equality can always
be achieved in his Theorem 1.
\end{rem}
\begin{example}
Let $\K$ be a complete ultrametric field,
$a:=\frac{1}{2}$,
$E_1:=\{0\}$,\linebreak
$A:=0$,
$E:=E_2:=c_0:=\cS_1(\K)$,
and $f:=B:=\lambda$ be the left shift on~$c_0$,
which admits the right shift $C:=\rho$ as a right inverse,
with \mbox{$\frac{1}{\|C\|}=1>a$.}
Then, for any $r>0$, the set $\Gamma$ defined in (\ref{thebiggam})
coincides with the ball $B^{\cS_a(\K)}_r(0)$,
which certainly is not the graph of a function on $E_1=\{0\}$.
The same pathology occurs if we replace $\K$ by the field
$\R$ of real numbers.
\end{example}
As a first step towards Theorem \ref{ultrastabm},
we now use the preceding results to
construct local $a$-stable manifolds
for locally defined maps on a manifold.
\begin{defn}
In the situation of \ref{thesituintr}
(with $a\in\;]0,1]$),
assume that $T_p(f)$ is $a$-hyperbolic.
We call an immersed submanifold $N\sub M_0$
a \emph{local $a$-stable manifold} around~$p$ with respect to~$f$
if (a), (c) and (d) from Definition \ref{defcsta} are satisfied
as well as
\begin{itemize}
\item[(b)$''$]
$N$ is tangent at $p$ to
the $a$-stable subspace $T_p(M)_{a,\obs}$ with respect to $T_p(f)$, i.e.,
$T_p(N)=T_p(M)_{a,\obs}$.
\end{itemize}
If $a=1$, we simply speak of a \emph{local stable manifold}.
\end{defn}
Before we discuss this concept, let us clarify two related points
mentioned in the introduction.
%
%
\begin{rem}\label{hypsuniq}
$E_{a,\obs}$
is uniquely determined
in the situation of Definition~\ref{defwas},
and if $\alpha$ is an automorphism of $E$,
then also $E_{a,\obu}$ is determined
(by the argument given in Remark~\ref{udecen}).
\end{rem}
The second point concerns Definition~\ref{defwas}.
%
%
\begin{rem}\label{indep}
Note that if (\ref{dewaseq})
holds for one chart $\kappa\colon U\to V$ as
described in Definition~\ref{defwas}
and ultrametric norm $\|.\|$
on~$E$ defining its topology, then also
for every other chart $\wt{\kappa}\colon \wt{U}\to \wt{V}\sub E$
and norm $\|.\|\wt{\;}$ with analogous
properties. In fact,
since $\tau:=\wt{\kappa}\circ\kappa^{-1}$ is analytic,
it is Lipschitz on some $0$-neighbourhood
$W\sub V$.
Moreover, there exists $D>0$ such that $\|.\|\wt{\;}\leq D\|.\|$.
If $x\in M$ such that $f^n(x)\to p$ as $n\to\infty$
and $a^{-n}\|\kappa(f^n(x))\|\to 0$,
then $f^n(x)\in \kappa^{-1}(W)$ for large $n$ and thus
$a^{-n}\|\wt{\kappa}(f^n(x))\|\wt{\;}
\leq  a^{-n}D \|\tau(\kappa(f^n(x)))\|$
$\leq D\Lip(\tau|_W) a^{-n}\|\kappa(f^n(x))\|\to 0$
(as required).
\end{rem}
We can prove the following result on local $a$-stable manifolds.
%
%
\begin{thm}\label{locstamfd}
Let $M$ be an analytic manifold modelled
on an ultrametric Banach space over a complete ultrametric field.
Let $M_0\sub M$ be open, $f\colon M_0\to M$ be
an analytic mapping and $p\in M_0$ be a fixed point
of~$f$, such that $T_p(f)$ is $a$-hyperbolic for some
$a\in \;]0,1]$. Then the following holds:
\begin{itemize}
\item[\rm(a)]
There exists a local $a$-stable manifold $N$ around $p$
with respect to $f$;
\item[\rm(b)]
The germ of $N$ at $p$ is uniquely determined;
\item[\rm(c)]
If $N$ is a local $a$-stable manifold around $p$ with respect to $f$,
then every neighbourhood of $p$ in $N$ contains an
open neighbourhood $\Omega\sub N$
which is a local $a$-stable manifold around $p$,
a submanifold of $M$, and has the following
additional properties:
\begin{itemize}
\item[\rm(i)]
There exist a chart $\kappa\colon P\to U\sub T_p(M)$ of $M_0$
around $p$ such that $\kappa(p)=0$,
an ultrametric
norm $\|.\|$ on $E:=T_p(M)$ defining its topology,
and an open neighbourhood $Q\sub P$ with $f(Q)\sub P$
and $\kappa(Q)=B^E_r(0)$ for some $r>0$,
such that $\Omega=\kappa^{-1}(\Gamma)$
where
\begin{eqnarray*}
\Gamma & :=& \{ z\in B_r^E(0)\colon \mbox{$g^n(z)$
is defined and $a^{-n}\|g^n(z)\|< r$
for}\notag \\
& & \;\; \mbox{all $n\in \N_0$, and $\lim_{n\to\infty} a^{-n} \| g^n(z) \| =0$}\},
\end{eqnarray*}
with $g:=\kappa\circ f\circ\kappa^{-1}|_{B^E_r(0)}$.
\item[\rm(ii)]
$f^n(x)\to p$ for all $x\in \Omega$; and
\item[\rm(iii)]
$a^{-n}\|\kappa(f^n(x))\|\to 0$ as $n\to\infty$,
for each $x\in \Omega$ and some $($and hence any$)$
chart $\kappa$ of $M_0$ around $p$, such that
$\kappa(p)=0$.
\end{itemize}
\end{itemize}
\end{thm}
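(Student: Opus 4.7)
The plan is to reduce to the concrete situation of \ref{thesitu} via a chart and a rescaling, and then read off everything from Theorem~\ref{mainthm}.

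\textbf{Reduction.} As in \ref{reusable}, choose a chart $\kappa\colon P\to U\sub E:=T_p(M)$ with $\kappa(p)=0$ and $d\kappa(p)=\id_E$, and an open neighbourhood $Q\sub P$ with $f(Q)\sub P$ and $\kappa(Q)=B^E_r(0)$. By $a$-hyperbolicity of $T_p(f)$, write $E=E_1\oplus E_2$ with an ultrametric norm $\|.\|$ adapted to Definition~\ref{defahyp}, so that $g:=\kappa\circ f|_Q\circ\kappa^{-1}$ satisfies $g'(0)=A\oplus B$ with $\|A\|<a$, $B\in\GL(E_2)$, and $\tfrac{1}{\|B^{-1}\|}>a$. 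Shrinking $r$ (Remark~\ref{remstrict}\,(d)) gives $\Lip(\wt g)<a$, so we are exactly in the setting of \ref{thesitu}. (Rescaling $g$ by a homothety, if needed, does not affect the geometric picture.)

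\textbf{Existence (a) and description (i), (ii), (iii).} Theorem~\ref{mainthm} produces the $f$-invariant set $\Gamma\sub B_r^E(0)$, which is the graph of an analytic $\phi\colon B^{E_1}_r(0)\to B^{E_2}_r(0)$ with $\phi(0)=0$, $\phi'(0)=0$. Define $N:=\kappa^{-1}(\Gamma)$. Then $N$ is a submanifold of $Q$ (hence of $M_0$), contains $p$, has $T_p(N)=E_1=T_p(M)_{a,\obs}$, satisfies $f(N)\sub N$ by Theorem~\ref{mainthm}(a), and $f|_N$ is analytic as the restriction of an analytic map to a submanifold mapped into itself. This $N$ is the desired local $a$-stable manifold, and properties (i)--(iii) in part (c) of the theorem are immediate from the definition of $\Gamma$.

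\textbf{Shrinking and basis of invariant neighbourhoods.} For part~(c), applied to an arbitrary local $a$-stable manifold $N$ and a prescribed neighbourhood $W$ of $p$ in $N$: by the inverse function theorem, tangency of $N$ to $E_1$ at $p$ means that, after shrinking into $W$, $\kappa(N)$ is the graph of an analytic $\psi\colon B^{E_1}_{s_0}(0)\to E_2$ with $\psi(0)=0$, $\psi'(0)=0$. The map $h\colon x\mapsto g_1(x,\psi(x))$ has $h'(0)=A$ with $\|A\|<a$, so Remark~\ref{behaveba}\,(a) gives $\|h(x)\|\leq a\|x\|$ on $B^{E_1}_{s_1}(0)$ for some $s_1\in \;]0,s_0]$, and hence $\|g^n(x,\psi(x))\|\leq a^n\|x\|$ for all $n\in \N_0$. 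By Theorem~\ref{mainthm}(f) (or directly from the definition of $\Gamma$), $\mbox{graph}(\psi|_{B^{E_1}_{s_1}(0)})\sub \Gamma$, so $\psi=\phi|_{B^{E_1}_{s_1}(0)}$ by Theorem~\ref{mainthm}(b). Hence $N$ agrees with $\kappa^{-1}(\Gamma)$ on an open neighbourhood of $p$. Now use Theorem~\ref{mainthm}(e) to obtain for each $s\in \;]0,s_1]$ an open neighbourhood $\Omega_s:=\kappa^{-1}(\Gamma\cap B^E_s(0))$ of $p$ in $N$, which is a submanifold of $M$ and again a local $a$-stable manifold with properties (i)--(iii); these form a basis of neighbourhoods of $p$ in $N$ contained in $W$.

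\textbf{Uniqueness of germ (b).} If $N_1$, $N_2$ are two local $a$-stable manifolds around $p$, the previous paragraph shows that each contains an open $p$-neighbourhood which, in the chart $\kappa$, coincides with an open subset of $\Gamma$ and carries the induced submanifold structure of $M$; intersecting, $N_1\sim_p N_2$. The main obstacle is the step that forces every local $a$-stable manifold to lie inside $\Gamma$ near $p$: this hinges on the strict inequality $\|A\|<a$ in Definition~\ref{defahyp}, which together with Remark~\ref{behaveba}\,(a) upgrades mere tangency to the automatic decay rate defining~$\Gamma$.
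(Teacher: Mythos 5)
Your reduction and existence argument follow the paper's route (chart as in \ref{reusable}, Theorem~\ref{mainthm}, $N:=\kappa^{-1}(\Gamma)$), and your idea of invoking Theorem~\ref{mainthm}\,(f) to avoid choosing an auxiliary $b<a$ is fine. But there is a genuine gap in the uniqueness/shrinking step. From $h(x):=g_1(x,\psi(x))$ and $\|h(x)\|\leq a\|x\|$ you conclude ``hence $\|g^n(x,\psi(x))\|\leq a^n\|x\|$ for all $n$''. This iteration presupposes that $g$ maps the (shrunk) graph of $\psi$ into itself, i.e.\ $g(x,\psi(x))=(h(x),\psi(h(x)))$; the estimate on the first component alone gives no control of $g_2(x,\psi(x))$, and without graph invariance the orbit leaves the graph and the second component is governed by the expanding factor $B$. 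Tangency plus $\|A\|<a$ does \emph{not} ``upgrade to the decay rate'': for a linear hyperbolic $g=A\oplus B$ and a non-invariant quadratic graph $\psi=q$ tangent to $E_1$, one has $g^n(x,q(x))=(A^nx,B^nq(x))$, which does not decay at rate $a$. So your closing sentence misidentifies what drives the argument; the invariance hypothesis $f(N)\sub N$ (with $f|_N\colon N\to N$ analytic) is essential, and it is not automatic for the piece of $N$ you obtain after shrinking into $W$ and into a graph, since shrinking can destroy invariance (recall $N$ is a priori only an immersed submanifold, so $f$ may return points of the small graph piece to parts of $N$ not represented by $\psi$).

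The paper closes exactly this gap by working first in an \emph{intrinsic} chart $\mu$ of $N$: since $f|_N\colon N\to N$ is analytic with $T_p(f|_N)=A$, Remark~\ref{behaveba}\,(a) gives $h(B^{E_1}_s(0))\sub B^{E_1}_{bs}(0)$ for $h:=\mu\circ f\circ\mu^{-1}$ and $b\in\;]\|A\|,a[$. This has two uses: it shows the shrunk pieces $\mu^{-1}(B^{E_1}_s(0))$ are again $f$-invariant (hence one may replace $N$ by an $f$-invariant open piece which is a submanifold of $Q$ and a graph of $\psi$), and, applied to the chart $\nu=\pr_1\circ\kappa|_N$, it yields precisely the graph invariance $g(\Theta_s)\sub\Theta_{bs}$ that licenses the iteration $\|g^n(x,\psi(x))\|<b^ns$ and hence $\Theta_\sigma\sub\Gamma_\sigma$. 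You need to insert this step (or an equivalent use of $f(N)\sub N$ and continuity/analyticity of $f|_N$) before your ``hence''; as written, the crux of parts (b) and (c) is assumed rather than proved. A further small point: you should also note $\Lip(\psi)\leq 1$ (via Remark~\ref{remstrict}\,(d)) so that $\|(x,\psi(x))\|=\|x\|$, which your application of Theorem~\ref{mainthm}\,(f) tacitly uses.
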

\begin{proof}
(a) and (c):
Let $E_1$ be the $a$-stable subspace
and~$E_2$ be the $a$-unstable subspace
of~$E:=T_p(M)$ with respect to $T_p(f)$,
and $\|.\|$ be a norm on $E=E_1\oplus E_2$
as in Definition \ref{defahyp}.
Let $\kappa\colon P\to U$, $Q$, $r>0$ and
$g\colon B^E_r(0)\to E$
be as in~\ref{reusable};
thus $g'(0)=T_p(f)$.
For $s\in \;]0,r]$,
let $\Gamma_s$ be the set
of all $z\in B_s^E(0)$ such that
$g^n(z)$ is defined and $a^{-n}\|g^n(z)\|< s$
for all $n\in \N_0$, and $\lim_{n\to\infty}
a^{-n} \| g^n(z) \| =0$.
After shrinking~$r$ if necessary,
the construction of Section~\ref{secirw}
can be applied with $g$ in place of $f$
(cf.\ Remark \ref{remstrict}\,(d)).
Hence, there exists an analytic map
\[
\phi\colon B_r^{E_1}(0)\to B_r^{E_2}(0)
\]
with $\phi(0)=0$
and $\phi'(0)=0$,
such that $\Gamma_s$ is the graph of $\phi|_{B_s^{E_1}(0)}$
for each $s\in \;]0,r]$,
and $g(\Gamma_s)\sub \Gamma_s$ (see Theorem \ref{mainthm}).
Then $\Gamma_s$ is a submanifold
of $B_s^E(0)$ and $g$ restricts to an analytic
map $\Gamma_s\to\Gamma_s$ for each $s\in\;]0,r]$.
Now $\Omega_s:=\kappa^{-1}(\Gamma_s)$
is a submanifold of $\kappa^{-1}(B^E_s(0))$
(and hence of $M$),
such that $f(\Omega_s)\sub \Omega_s$
and $f|_{\Omega_s}\colon \Omega_s\to\Omega_s$
is analytic.
Also, $T_p(\Omega_s)=E_1$
because $T_0(\Gamma_s)=E_1$,
and hence each $\Omega_s$ is a local $a$-stable manifold
around $p$ with respect to $f$. Moreover, the $\Omega_s$ with $s\in \;]0,r]$
form a basis of open neighbourhoods of $p$ in $\Omega_r$.
Finally, $f^n(x)\to p$ holds for all $x\in \Omega_r$
and $a^{-n}\|\kappa(f^n(x))\|=a^{-n}\|g^n(\kappa(x))\|\to 0$
by definition of $\Gamma_r$ (and we have analogous behaviour
with respect to other charts and norms, by Remark \ref{indep}).
Thus (a) is established and (c) holds for~$N=\Omega_r$
(and hence for any local $a$-stable manifold~$N$, once we have~(b)).

(b)  We retain the notation from the proof of (a),
let $N$ be another local $a$-stable manifold around $p$ with respect to $f$,
and pick $b\in\;]\|A\|,a[$ with $A:=(T_p(f))|_{E_1}$.
Let $\mu\colon V\to B^{E_1}_\tau(0)$
be a chart of~$N$ around~$p$ such that
$\mu(p)=0$ and $d\mu(p)=\id_{E_1}$.
There is $\sigma\in \;]0,\tau]$
such that $h:=\mu \circ f\circ \mu^{-1}$
is defined on all of
$B^{E_1}_\sigma(0)$.
Since $h'(0)=T_p(f|_N)=A$ with $\|A\|\leq b$,
Remark~\ref{behaveba}\,(a)
shows that
\begin{equation}\label{relvainfo}
h(B^{E_1}_s(0))\; \sub\;
B^{E_1}_{bs}(0)\; \sub\; B_s^{E_1}(0)
\quad\mbox{for all $\, s\in \;]0,\sigma]$,}
\end{equation}
after possibly shrinking~$\sigma$.
Thus $\mu^{-1}(B^{E_1}_s(0))$ is a local $a$-stable manifold.\\[2.5mm]
Now write
$g=(g_1,g_2)=g'(0)+\wt{g}\colon B^E_r(0)\to E_1\oplus E_2$,
where $\Lip(\wt{g})<a$
and $g'(0)=A\oplus B$ with $A$ as before and $\frac{1}{\|B^{-1}\|}>a$.
Then $Q\cap N$ is an immersed
submanifold of~$Q$ tangent to~$E_1$.
After replacing~$N$ by an
$f$-invariant open $p$-neighbourhood in $N$ (cf.\ (\ref{relvainfo})),
we may assume that~$N$ is a submanifold of~$Q$.
Since~$\kappa(N)$ is tangent to~$E_1$ at $0\in E$,
the inverse function theorem implies
that $\kappa(N)$ is the graph
of an analytic map $\psi\colon W\to E_2$
on some open $0$-neighbourhood
$W\sub B^{E_1}_r(0)$,
with $\psi(0)=0$, $\psi'(0)=0$ and $\Lip(\psi)\leq 1$
(after shrinking~$N$ if necessary).
Then $\nu:=\pr_1\circ \kappa|_N$ is a chart
for~$N$ with $\nu(p)=0$ and
$d\nu(p)=\id_{E_1}$
(where $\pr_1\colon E_1\oplus E_2\to E_1$).
Hence, by the discussion leading to (\ref{relvainfo}),
there exists $\sigma\in \;]0,r]$
such that $B^{E_1}_\sigma(0)\sub W$ and
\begin{equation}\label{thsdfnd}
g(\Theta_s)\;\sub\; \Theta_{bs}\quad \mbox{for all $\, s\in \;]0,\sigma]$,}
\end{equation}
where
$\Theta_s:=\{(x,\psi(x))\colon x\in B^{E_1}_s(0)\}$
for $s\in\;]0,\sigma]$.
Note that $\|z\|=\|x\|<s$
for all $s\in\;]0,\sigma]$ and $z=(x,\psi(x))\in\Theta_s$,
since $\Lip(\psi)\leq 1$.
By (\ref{thsdfnd}),
$g^n(x,\psi(x))$ is defined
for each $x\in B^{E_1}_\sigma(0)$.
Moreover,
$\|g^n(x,\psi(x))\|<b^n\sigma$
(since $g^n(x,\psi(x))\in\Theta_{b^n\sigma}$),
entailing that
$a^{-n}\|g^n(x,\psi(x))\|<\sigma$ for each $n\in \N_0$
and $a^{-n}\|g^n(x,\psi(x)\|\to 0$ as $n\to\infty$.
Hence $\Theta_\sigma\sub \Gamma_\sigma$
and since both sets are graphs of functions on the same domain,
it follows that $\Theta_\sigma=\Gamma_\sigma$.
Hence $\Theta_\sigma$ is an open submanifold
of $\Gamma_r$.
Then $\Omega_\sigma$ is an
open submanifold of $\Omega_r$ which contains $p$,
and it is also an open submanifold of $N$ because
$\Omega_\sigma=
\kappa^{-1}(\Gamma_\sigma)=\kappa^{-1}(\Theta_\sigma)
=\nu^{-1}(B^{E_1}_\sigma(0))$.
This completes the proof.
\end{proof}
Note that, in contrast to Theorem \ref{ultrastabm},
$T_p(f)$ is not assumed to be an automorphism in Theorem \ref{locstamfd}.
\section{Global stable manifolds}\label{secglob}
%
%
We now prove Theorem~\ref{ultrastabm}
from the introduction.\\[2.5mm]
{\bf Proof of Theorem~\ref{ultrastabm}.}
Let $r>0$, $E$, $\kappa$, $Q$ with $\kappa(Q)=B^E_r(0)$, $g$  and the submanifolds
$\kappa^{-1}(\Gamma_s)$ of $M$ (for $s\in\; ]0,r]$)
be as in the proof of Theorem \ref{locstamfd}\,(c).
By Theorem \ref{mainthm}\,(h) and Remark~\ref{remstrict}\,(d),
we may assume that
$g(\Gamma)$ is an open subset of $\Gamma:=\Gamma_r$
and $g|_{\Gamma}\colon \Gamma\to g(\Gamma)$
is an analytic diffeomorphism.
Then $\Omega=\kappa^{-1}(\Gamma)$ is
a submanifold of~$M$ which is tangent to $T_p(M)_{a,\obs}$ at $p$,
the image $f(\Omega)$ is open in~$\Omega$,
and~$f$ restricts to a diffeomorphism $\Omega\to f(\Omega)$.
We now show that
%
\begin{equation}\label{impostp}
W_a^\obs=\bigcup_{n\in \N_0}f^{-n}(\Omega)\,.
\end{equation}
By (ii) and (iii) in Theorem~\ref{locstamfd}\,(c),
we have $\Omega\sub W_a^\obs$
and hence $\bigcup_{n\in \N_0}f^{-n}(\Omega)$ $\sub W_a^\obs$,
recalling that $f(W_a^\obs)=W_a^\obs$. \\[2.5mm]
Conversely, suppose that $x\in W_a^s$.
Then, by the definition of $W_a^s$
(and Remark~\ref{indep}),
there is $m\in \N_0$
such that $f^n(x)\in Q$ for all $n\geq m$
and $a^{-n}\|\kappa(f^n(x))\|\to 0$.
After increasing $m$ if necessary, we may assume
that $a^{-n}\|\kappa(f^n(x))\|<r$
for all $n\geq m$.
Then
\[
a^{-k}\|\kappa(f^k(f^m(x)))\|
\; \leq \; a^{-(k+m)}\|\kappa(f^{k+m}(x))\| \; <\; r
\]
for all $k\in \N_0$
and $a^{-k}\|g^k(\kappa(f^m(x)))\|=a^{-k}\|\kappa(f^{k+m}(x))\|\to 0$,
showing that
$\kappa(f^m(x))\in \Gamma$.
Therefore $f^m(x)\in\Omega$
and thus $x\in f^{-m}(\Omega)\sub \bigcup_{n\in \N_0}f^{-n}(\Omega)$,
completing the proof of~(\ref{impostp}).\\[2.5mm]
Since $\Omega$ is a submanifold of~$M$ and $f$ a diffeomorphism,
also $\Omega_n:=f^n(\Omega)$
is a submanifold of~$M$, for each $n\in \Z$.
Because~$f(\Omega)$ is an open submanifold of~$\Omega$
and~$f$ restricts to a diffeomorphism
from~$\Omega$ to~$f(\Omega)$,
it follows that $\Omega_m$ is an open submanifold
of $\Omega_n$, for all $m\geq n$,
and that the map
\[
h_{m,n}\colon \Omega_n\to \Omega_m
\]
induced by $f^{m-n}$ is a diffeomorphism.
We give $W_a^\obs$ the unique
analytic manifold structure making
each $\Omega_n$ an open submanifold of $W_a^\obs$.
Since each $\Omega_n$ is open in $W_a^\obs$ and the inclusion map
$\Omega_n\to M$ is an immersion, $W_a^\obs$
is an immersed submanifold of~$M$.
Moreover, $T_p(W_a^\obs)=T_p(\Omega)$
is the $a$-stable subspace of $T_p(M)$ with respect to $T_p(f)$,
and
$f(W_a^\obs)=W_a^\obs$.
The restriction $h$ of $f$ to a map
$W_a^\obs\to W_a^\obs$ is analytic, because
$h|_{\Omega_n}=\lambda_{n+1}\circ h_{n+1,n}$
is analytic for each $n\in \Z$
(where $\lambda_n\colon \Omega_n\to W_a^\obs$
is the inclusion map).
Also $h^{-1}$ is analytic, because
$h^{-1}|_{\Omega_n}=\lambda_{n-1}\circ (h_{n,n-1})^{-1}$
is analytic.
Thus, the existence part of
Theorem~\ref{ultrastabm} is established.
Moreover, the final assertion holds because
$\Omega$ might be replaced by its open subset $\kappa^{-1}(\Gamma_s)$
for each $s\in \;]0,r]$.

Uniqueness: We let $W_a^\obs$ be the manifold just constructed
and write $N$ for $W_a^\obs$, equipped with
any manifold structure for which conditions (a)--(c)
of the theorem are satisfied. Then both $N$ and $W_a^\obs$
are local $a$-stable manifolds.
Hence, there is an open neighbourhood $W\sub N$ of~$p$
which is also an open neighbourhood of $p$ in $W_a^\obs$,
and on which~$N$ and $W_a^\obs$
induce the same analytic manifold structure
(Theorem~\ref{locstamfd}\,(b)).
Let $\Omega\sub W$ be as before.
Since $f^n$ restricts to diffeomorphism of both $W_a^\obs$
and~$N$, 
it follows that $f^{-n}(\Omega)$
is an open submanifold of both $W_a^\obs$ and~$N$
(with the same induced analytic manifold structure),
for each $n\in\N_0$.
Since the sets $f^{-n}(\Omega)$ form an open cover
of $W_a^\obs$ and~$N$, it follows that
$W_a^\obs=N$ as an analytic manifold.\,\Punkt
\section{Local unstable manifolds}\label{secirw3}
%
%
We construct local unstable manifolds
by an adaptation of Irwin's method
(cf.\ also \cite{Wel}). Our general setting is the following:
\begin{numba}\label{newsett7}
Let $M$ be an analytic manifold modelled on an ultrametric Banach space
over a complete ultrametric field.
Let $M_0\sub M$ be open,
$f\colon M_0\to M$
be analytic,
$a>0$
and $p$ be a fixed point of $f$
such that $T_p(f)$ is $a$-hyperbolic.
\end{numba}
\begin{defn}
In the situation of \ref{newsett7},
an immersed submanifold $N\sub M_0$
is called a \emph{local $a$-unstable manifold} around~$p$ with respect to~$f$
if (a)--(c) hold:
\begin{itemize}
\item[(a)]
$p\in N$;
\item[(b)]
$N$ is tangent at $p$ to the $a$-unstable subspace
$T_p(M)_{a,\obu}$ of $T_p(M)$ with respect to $T_p(f)$;
\item[(c)]
There exists an open neighbourhood $U$ of $p$
in $N$ such that $f(U)\sub N$
and $f|_U\colon U\to N$ is analytic.
\end{itemize}
\end{defn}
We can show:
%
%
\begin{thm}\label{locumfd}
If $a\geq 1$ in the situation of {\rm\ref{newsett7}},
then the following holds:
\begin{itemize}
\item[\rm(a)]
There exists a local $a$-unstable manifold $N$ around $p$
with respect to $f$;
\item[\rm(b)]
The germ of $N$ at $p$ is uniquely determined.
\end{itemize}
\end{thm}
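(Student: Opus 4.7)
The plan is to mirror the construction of local stable manifolds from Section~\ref{secirw}, but with backward instead of forward orbits. Passing to a chart around~$p$ with $d\kappa(p)=\id$, I first reduce to the setup analogous to~\ref{thesitu}: an analytic $f=(f_1,f_2)\colon B^E_r(0)\to E_1\oplus E_2$ with $f(0)=0$ and $f'(0)=A\oplus B$, where $E_1$, $E_2$ are the $a$-stable and $a$-unstable subspaces of $T_p(M)$, the norm on $E=E_1\oplus E_2$ satisfies the max condition~(\ref{thussup2}), $\|A\|<a$, $B$ is invertible with $\tfrac{1}{\|B^{-1}\|}>a$, and $\wt f:=f-f'(0)$ has $\Lip(\wt f)<a$ (after shrinking~$r$). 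Since $a\geq 1$, the sequence space $\cS_{a^{-1}}(E)$ of Section~\ref{secseq} is available (with $a^{-1}\in\,]0,1]$); a sequence $w=((x_n,y_n))_{n\in\N_0}\in B^{\cS_{a^{-1}}(E)}_r(0)$ satisfying $f(w_{n+1})=w_n$ for all $n\geq 0$ is exactly a backward $f$-orbit of $w_0$ in $B^E_r(0)$ decaying at rate~$a^{-n}$.

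Paralleling~(\ref{dfnsmg}), I would define an operator $g\colon B^{\cS_{a^{-1}}(E)}_r(0)\to\cS_{a^{-1}}(E)$ by
\begin{equation*}
g(w)_0:=\bigl(Ax_1+\wt{f}_1(w_1),\,0\bigr),\quad
g(w)_n:=\bigl(Ax_{n+1}+\wt{f}_1(w_{n+1}),\;B^{-1}(y_{n-1}-\wt{f}_2(w_n))\bigr)\;(n\geq 1),
\end{equation*}
so that $w=((0,y),(0,0),\ldots)+g(w)$ is equivalent to the statement that $w$ is a backward $f$-orbit with $\pi_2(w_0)=y$. Rewriting $g$ via the shifts $\lambda,\rho$ (with $\|\lambda\|\leq a^{-1}$, $\|\rho\|\leq a$ by Lemma~\ref{basicsseq}(a) applied to the base $a^{-1}$) and the componentwise extensions from Lemma~\ref{lamapsop} and Proposition~\ref{propseqana} applied to $A$, $B^{-1}$, $\wt{f}_1$, $\wt{f}_2$, the ultrametric sum estimate yields
\begin{equation*}
\Lip(g)\;\leq\;\max\bigl\{a^{-1}\max(\|A\|,\Lip(\wt{f}_1)),\;a\|B^{-1}\|\max(1,a^{-1}\Lip(\wt{f}_2))\bigr\}\;<\;1,
\end{equation*}
and $g$ is analytic with $g(0)=0$. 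By Theorem~\ref{IFT} and~(\ref{domi}), $G:=\id-g$ is then an isometric analytic diffeomorphism of $B^{\cS_{a^{-1}}(E)}_r(0)$, and $y\mapsto w(y):=G^{-1}((0,y),(0,0),\ldots)$ is analytic with $w(0)=0$. Setting $\psi(y):=\pi_1(w(y)_0)$ yields an analytic $\psi\colon B^{E_2}_r(0)\to E_1$ with $\psi(0)=0$ and $\psi'(0)=0$, whose graph $\Gamma$ is a submanifold tangent to~$E_2$ at~$0$.

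For (a), forward invariance follows by prepending: for $z=(\psi(y),y)\in\Gamma$ with $\|y\|$ small enough, the sequence $(f(z),\,w(y)_0,\,w(y)_1,\ldots)$ still lies in $B^{\cS_{a^{-1}}(E)}_r(0)$ (its $\cS_{a^{-1}}$-norm equals $\max(\|f(z)\|,\,a\|w(y)\|_{a^{-1}})$, both small for small~$y$) and is a backward $f$-orbit of $f(z)$, so uniqueness of $G^{-1}$ forces $f(z)\in\Gamma$. Taking an open $p$-neighbourhood $U\sub\Gamma$ on which this works and pulling back through the chart produces the local $a$-unstable manifold~$N$. For (b), let $N'$ be a second local $a$-unstable manifold; a chart of~$N'$ at~$p$ with differential $\id_{E_2}$ identifies $f|_{N'}$ locally with an analytic map $h$ satisfying $h'(0)=B$, which by Theorem~\ref{IFT} is a local analytic diffeomorphism whose inverse contracts at rate $\|B^{-1}\|<a^{-1}\leq 1$. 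Iterating $h^{-1}$ builds a backward $f$-orbit of each $z\in N'$ near~$p$ lying in $B^{\cS_{a^{-1}}(E)}_r(0)$; by uniqueness of $G^{-1}$ it equals $w(\pi_2(z))$, so $z\in\Gamma$. Hence $N'$ and $N$ share an open $p$-neighbourhood on which the induced analytic structures agree. The main obstacle I expect is the bookkeeping that certifies $\Lip(g)<1$ (balancing the shift norms $a^{\pm 1}$ against the bounds on $A$, $B^{-1}$ and $\wt f$) together with the careful shrinking needed to secure forward invariance of~$\Gamma$ without breaking the fixed-point setup.
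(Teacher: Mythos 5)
Your proposal is correct and follows essentially the same route as the paper's proof in Appendix~\ref{sec2app}: the same backward-orbit operator on $\cS_{1/a}(E)$ (identical to~(\ref{dfnsmg5})), the isometry $G=\id-g$ via the ultrametric inverse function theorem, $w(y)=G^{-1}((0,y),(0,0),\ldots)$ giving the graph tangent to $E_2$, and uniqueness by building backward orbits on a competing manifold using that $h'(0)=B$ expands at a rate strictly better than $a$. The only cosmetic differences are that the paper proves local invariance by applying $f$ termwise to a backward orbit rather than prepending $f(z)$, and fixes an explicit $b\in\;]a,\frac{1}{\|B^{-1}\|}[$ in the uniqueness step where you invoke $\|B^{-1}\|<a^{-1}$ directly.
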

\begin{rem}
If $T_p(f)$ is invertible,
we may assume that $M_1:=f(M_0)$ is open and $f\colon M_0\to M_1$
a diffeomorphism, after possibly shrinking $M_0$.
Now Theorem \ref{locstamfd}
provides a local $\frac{1}{a}$-stable manifold $N\sub M_0\cap M_1$
around $p$ with respect to  $f^{-1}\colon M_1\to M$.
Then $N$ is a local $a$-unstable manifold
with respect to~$f$. Hence, we already have most of
Theorem \ref{locumfd} if $T_p(f)$ is invertible.
The interesting point is
that the theorem remains valid
if $T_p(f)$ is not invertible.
\end{rem}
Because the proof of Theorem \ref{locumfd}
is quite similar to that of Theorem \ref{locstamfd}, we relegate it to
an appendix (Appendix \ref{sec2app}).
\section{Spectral interpretation of hyperbolicity}\label{secfin}
%
In this section, we consider
the special case where~$\alpha$ is an automorphism
of a \emph{finite-dimensional} vector space
over a complete ultrametric field
$(\K,|.|)$.
We shall interpret $a$-hyperbolicity
as the absence of eigenvalues
of absolute value $a$ (in an
algebraic closure of $\K$).
Moreover, we shall see that an $a$-centre subspace
and an $a$-centre-stable subspace always exist.
%
%
\begin{numba}\label{findiset}
Let $(\K,|.|)$ be a complete ultrametric
field, $E$ be a finite-dimensional $\K$-vector
space, and $\alpha\colon E\to E$
be a linear map.
We let $\wb{\K}$ be an algebraic closure of~$\K$,
and use the same symbol,
$|.|$, for the unique extension
of the given absolute value to~$\wb{\K}$
(see \cite[Theorem~16.1]{Sch}).
We let $R(\alpha)$ be the set
of all absolute values $|\lambda|$,
where $\lambda\in \wb{\K}$
is an eigenvalue
of the $\wb{\K}$-linear self-map
$\alpha_{\wb{\K}}:=\alpha\otimes \id_{\wb{\K}}$
of the $\wb{\K}$-vector space
$E_{\wb{\K}}:=E\tensor_\K \wb{\K}$
obtained from~$E$ by extension of scalars.
For each $\lambda\in \wb{\K}$,
we let
\[
(E_{\wb{\K}})_{(\lambda)}\, :=\, \{ x\in E_{\wb{\K}}\!:
(\alpha_{\wb{\K}}-\lambda)^dx=0\}
\]
be the generalized eigenspace of $\alpha_{\wb{\K}}$
in $E_{\wb{\K}}$ corresponding to~$\lambda$
(where $d$ is the dimension of the $\K$-vector space~$E$).
Given $\rho\in [0,\infty[$,
we define
\begin{equation}\label{dfspacerho}
(E_{\wb{\K}})_\rho\; :=\;
\bigoplus_{|\lambda|=\rho} (E_{\wb{\K}})_{(\lambda)}\, ,\vspace{-.7mm}
\end{equation}
where the sum is taken over all
$\lambda\in \wb{\K}$
such that $|\lambda|=\rho$.
\end{numba}
The following fact (cf.\ (1.0) on p.\,81 in \cite[Chapter~II]{Mar})
is important:\footnote{In \cite[p.\,81]{Mar},
$\K$ is a local field,
but the proof works also for complete ultrametric fields.}
\begin{la}
For each $\rho\in R(\alpha)$,
the vector
subspace $(E_{\wb{\K}})_\rho$ of $E_{\wb{\K}}$ is defined
over~$\K$, i.e.,
$(E_{\wb{\K}})_\rho= (E_\rho)_{\wb{\K}}$
with $E_\rho:=(E_{\wb{\K}})_\rho\cap E$.
Thus
\begin{equation}\label{isdsum}
E\; =\; \bigoplus_{\rho\in R(\alpha)} E_\rho\,,
\end{equation}
and each $E_\rho$
is an $\alpha$-invariant vector subspace of~$E$.\,\Punkt
\end{la}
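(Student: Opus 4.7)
The plan is to realize the decomposition through a factorization of the minimal polynomial of~$\alpha$ over~$\K$, with the uniqueness of the extension of $|.|$ from~$\K$ to~$\wb{\K}$ (\cite[Theorem~16.1]{Sch}) serving as a Galois-descent tool. That uniqueness implies that every $\K$-algebra automorphism $\sigma\colon\wb{\K}\to\wb{\K}$ is isometric. Consequently, any two roots $\lambda,\mu\in\wb{\K}$ of the same irreducible polynomial $p(x)\in\K[x]$ satisfy $|\lambda|=|\mu|$: the $\K$-isomorphism $\K(\lambda)\to\K(\mu)$ sending $\lambda\mto\mu$ extends, by the standard lifting property of algebraic closures, to a $\K$-automorphism~$\sigma$ of~$\wb{\K}$, whence $|\mu|=|\sigma(\lambda)|=|\lambda|$.

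With this in hand, I would factor the minimal polynomial $m(x)\in\K[x]$ of~$\alpha$ as $m(x)=\prod_i m_i(x)^{f_i}$ with pairwise distinct monic irreducible $m_i\in\K[x]$, and attach to each~$m_i$ the common absolute value~$\rho_i$ of its roots in~$\wb{\K}$. Regrouping the factors by $\rho\in R(\alpha)$, I would set
\[
q_\rho(x)\;:=\;\prod_{i\colon\rho_i=\rho} m_i(x)^{f_i}\;\in\;\K[x]\,,
\]
so that $m=\prod_{\rho\in R(\alpha)} q_\rho$ is a factorization into pairwise coprime polynomials over~$\K$. The primary decomposition theorem over~$\K$ (equivalently, the Chinese Remainder Theorem applied to $\K[x]/(m)$) then furnishes the $\alpha$-invariant direct sum
\[
E\;=\;\bigoplus_{\rho\in R(\alpha)} E_\rho\,,\qquad E_\rho\;:=\;\ker(q_\rho(\alpha))\sub E\,.
\]

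To close the argument, I would identify these $\K$-subspaces with the ones in the statement. Because the kernel of a linear endomorphism of a finite-dimensional vector space commutes with extension of scalars, $(E_\rho)_{\wb{\K}}=\ker(q_\rho(\alpha_{\wb{\K}}))$. Over~$\wb{\K}$ the polynomial $q_\rho$ splits as $\prod_{|\lambda|=\rho}(x-\lambda)^{n_\lambda}$, and each $n_\lambda$ is at least the size of the largest Jordan block of $\alpha_{\wb{\K}}$ at~$\lambda$, since $q_\rho$ carries the corresponding factor of the minimal polynomial to the required multiplicity. Hence
\[
\ker(q_\rho(\alpha_{\wb{\K}}))\;=\;\bigoplus_{|\lambda|=\rho}\ker\big((\alpha_{\wb{\K}}-\lambda)^d\big)\;=\;(E_{\wb{\K}})_\rho\,,
\]
which yields $(E_\rho)_{\wb{\K}}=(E_{\wb{\K}})_\rho$ and $E_\rho=(E_{\wb{\K}})_\rho\cap E$, completing the verification of~(\ref{isdsum}). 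The step requiring the most care is the isometry-of-roots lemma, which one might be tempted to prove via separable Galois theory; the formulation through extension of algebraic closures is preferable because it applies uniformly, independently of the characteristic of~$\K$ and the separability of the~$m_i$.
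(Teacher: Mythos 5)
Your argument is correct, but it is worth noting that the paper does not actually prove this lemma at all: it simply quotes it from \cite{Mar} (Chapter~II, (1.0), p.\,81), adding in a footnote that Margulis's proof, written for local fields, goes through for complete ultrametric fields. So what you have supplied is a self-contained substitute for that citation, and it is a sound one. Your route --- factor the minimal polynomial $m$ of $\alpha$ over $\K$ into irreducibles, observe via the uniqueness of the extension of $|.|$ to $\wb{\K}$ (\cite[Theorem~16.1]{Sch}) that all roots of a fixed irreducible factor share the same absolute value, group the factors accordingly into pairwise coprime $q_\rho\in\K[x]$, and apply primary decomposition over $\K$ together with the compatibility of kernels with extension of scalars --- is exactly the kind of elementary argument the lemma needs, and your insistence on extending embeddings into $\wb{\K}$ rather than invoking Galois theory is the right move, since it is insensitive to inseparability and to the characteristic, which matters for this paper. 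Two small points: in your final display the equality $\ker\big(q_\rho(\alpha_{\wb{\K}})\big)=\bigoplus_{|\lambda|=\rho}\ker\big((\alpha_{\wb{\K}}-\lambda)^d\big)$ deserves one more line --- the inclusion $\supseteq$ follows from your multiplicity remark, and the reverse inclusion (or equality) follows either from primary decomposition over $\wb{\K}$ applied to the coprime factors $(x-\lambda)^{n_\lambda}$ of $q_\rho$, or by summing over $\rho$ and comparing the two direct-sum decompositions of $E_{\wb{\K}}$; and in fact $n_\lambda$ equals (not merely dominates) the multiplicity of $\lambda$ in $m$, since $\lambda$ is a root of exactly one irreducible factor. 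Neither point affects the validity of the proof.
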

It is essential for us that certain well-behaved norms
exist on~$E$ (as in~\ref{findiset}).
%
%
\begin{defn}\label{defnadpt}
A norm $\|.\|$
on $E$ is \emph{adapted to $\alpha$} if the following holds:
\begin{itemize}
\item[(a)]
$\|.\|$ is ultrametric;
\item[(b)]
$\big\|\sum_{\rho\in R(\alpha)} x_\rho\big\|=\max\{\|x_\rho\|
\colon \rho\in R(\alpha)\}$\vspace{.5mm}
for each $(x_\rho)_{\rho\in R(\alpha)}
\in \prod_{\rho\in R(\alpha)} E_\rho$; and
\item[(c)]
$\|\alpha (x)\|=\rho\|x\|$ for each $0\not= \rho\in R(\alpha)$
and $x\in E_\rho$.
\end{itemize}
\end{defn}
%
%
\begin{prop}\label{propadapt}
Let $E$ be a finite-dimensional vector space over
a complete ultrametric field $(\K,|.|)$ and
$\alpha\colon E\to E$ be a linear map.
Let $\ve>0$
and $E_0:=\{x\in E\colon (\exists n\in \N)\;\alpha^n(x)=0\}$.
Then $E$ admits a norm $\|.\|$ adapted to~$\alpha$,
such that $\alpha|_{E_0}$ has operator norm
$<\ve$ with respect to $\|.\|$.
\end{prop}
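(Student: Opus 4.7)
The plan is to build the adapted norm block by block on the decomposition $E = \bigoplus_{\rho \in R(\alpha)} E_\rho$ supplied by the preceding lemma. For each $\rho \in R(\alpha)$ I will construct an ultrametric $\K$-norm $\|.\|_\rho$ on $E_\rho$ such that $\|\alpha x\|_\rho = \rho\|x\|_\rho$ when $\rho > 0$, and with $\|\alpha|_{E_0}\|_0 < \ve$ when $\rho = 0$. Setting $\|x\| := \max_\rho \|x_\rho\|_\rho$ for $x = \sum_\rho x_\rho$ then gives an ultrametric norm on $E$ satisfying all of (a)--(c) of Definition~\ref{defnadpt} together with the required bound on $\alpha|_{E_0}$.

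The nilpotent block is straightforward. Pick any ultrametric $\K$-norm $\|.\|^{(0)}$ on $E_0$, an integer $d$ with $\alpha^d|_{E_0} = 0$, and a scalar $t \in \K^\times$ with $|t| < \ve$. Setting
\[
\|x\|_0 \;:=\; \max_{0 \leq k < d}\, |t|^{-k}\,\|\alpha^k x\|^{(0)}
\]
yields an ultrametric norm with $\|\alpha x\|_0 \leq |t|\,\|x\|_0 < \ve\,\|x\|_0$ by a direct reindexing.

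The substantive case is $\rho > 0$, where $\alpha_\rho := \alpha|_{E_\rho}$ is invertible with every $\wb\K$-eigenvalue of absolute value $\rho$. I decompose $\alpha_\rho = S + N$ via Jordan--Chevalley with $S$ semisimple, $N$ nilpotent, and $SN = NS$; the algebra $\K[S]$ is then a product of finite field extensions $L_1 \times \cdots \times L_m$ of $\K$, each carrying a unique extension $|.|_{L_i}$ of $|.|$ by completeness (cf.\ \cite[Theorem~16.1]{Sch}). This produces an internal decomposition $E_\rho = \bigoplus_i E_i$ in which $E_i$ is an $L_i$-vector space on which $S$ acts as multiplication by some $s_i \in L_i$ of $L_i$-absolute value $\rho$. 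Fixing $L_i$-bases and taking the $L_i$-maximum norms furnishes an ultrametric $\K$-norm $\|.\|^{(\rho)}$ on $E_\rho$ with $\|Sx\|^{(\rho)} = \rho\,\|x\|^{(\rho)}$ for every $x$. Finally I absorb $N$ by the same twist as in the nilpotent case: choosing $t \in \K^\times$ with $|t| < \rho$ and setting
\[
\|x\|_\rho \;:=\; \max_{k \geq 0}\, |t|^{-k}\,\|N^k x\|^{(\rho)},
\]
the commutation $SN = NS$ yields $\|Sx\|_\rho = \rho\|x\|_\rho$ and $\|Nx\|_\rho \leq |t|\,\|x\|_\rho < \rho\|x\|_\rho$, so the domination principle~(\ref{domi}) forces $\|\alpha_\rho x\|_\rho = \|Sx + Nx\|_\rho = \|Sx\|_\rho = \rho\|x\|_\rho$, as required.

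The main obstacle is the $\K$-rationality of the Jordan--Chevalley decomposition $\alpha_\rho = S + N$, which is routine in characteristic zero but can fail over a non-perfect field of positive characteristic. The standard workaround is to carry out the construction after scalar extension to a finite extension $L/\K$ large enough to make the decomposition available, build the adapted $L$-norm on $E_\rho \otimes_\K L$ as above, and then descend to $\K$: since $|.|$ extends uniquely (hence Galois-invariantly) to $L$, the restriction of the $L$-norm to $E_\rho \cong E_\rho \otimes 1 \subset E_\rho \otimes_\K L$ is a $\K$-norm inheriting all the properties needed for~$\|.\|_\rho$.
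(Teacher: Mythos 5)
Your proof is correct, and its overall shape is the paper's: build an ultrametric norm on each block of the decomposition $E=\bigoplus_{\rho\in R(\alpha)}E_\rho$ and glue them by taking the maximum, which automatically yields properties (a)--(c) of Definition~\ref{defnadpt}. The differences lie in how the blocks are treated. For the nilpotent block the paper (Lemma~\ref{rhozero}) rescales a Jordan basis, while you use the coordinate-free twisted norm $\max_{0\le k<d}|t|^{-k}\|\alpha^kx\|^{(0)}$; these are equivalent in substance, and both need the standing non-discreteness assumption to find $t\in\K^\times$ with $|t|<\ve$. For $\rho>0$ the paper proves nothing at all: it simply quotes Lemma~\ref{normrho}, i.e.\ \cite[Lemma~4.4]{SUR}, whereas you give a self-contained construction via the Jordan--Chevalley decomposition $\alpha|_{E_\rho}=S+N$, the splitting of $\K[S]$ into finite field extensions carrying the unique extension of $|.|$, a maximum norm making $S$ multiply norms by $\rho$, and a second twist $\max_k|t|^{-k}\|N^kx\|^{(\rho)}$ with $|t|<\rho$ so that the domination principle~(\ref{domi}) gives $\|\alpha x\|_\rho=\|Sx\|_\rho=\rho\|x\|_\rho$. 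Your treatment of the one delicate point --- rationality of Jordan--Chevalley over a possibly non-perfect field of positive characteristic --- is sound: $S$ and $N$ are defined over a finite (purely inseparable) extension $L$, the absolute value extends uniquely to $L$, and since the required conclusions are norm identities they survive restriction of the $L$-norm on $E_\rho\otimes_\K L$ to the $\K$-subspace $E_\rho\otimes 1$ (the appeal to Galois invariance is not actually needed for this restriction step); equivalence of norms in finite dimension then ensures compatibility with the topology. In short, the paper's route buys brevity by outsourcing the multiplicative-norm lemma, while yours buys self-containedness at the cost of the Jordan--Chevalley and descent discussion.
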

The proof uses \cite[Lemma~4.4]{SUR}
(the proof of which does not
require that $\alpha$ is an automorphism,
as assumed in \cite{SUR}):
%
\begin{la}\label{normrho}
For each $\rho\in R(\alpha)\setminus\{0\}$,
there exists an ultrametric norm
$\|.\|_\rho$ on $E_\rho$ such that $\|\alpha(x)\|_\rho=\rho\|x\|_\rho$
for each $x\in E_\rho$.\,\Punkt
\end{la}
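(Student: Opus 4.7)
The plan is to first construct an ultrametric norm on the scalar extension $F := E_\rho \otimes_\K \wb\K$, which by the preceding lemma equals $(E_{\wb\K})_\rho = \bigoplus_{|\lambda|=\rho}(E_{\wb\K})_{(\lambda)}$, and then to restrict it to $E_\rho \subset F$. Since $\wb\K$ is algebraically closed, $\alpha_{\wb\K}|_F$ admits a Jordan basis: one writes $F = \bigoplus_j V_j$, where each block $V_j$ has a basis $e_1^{(j)},\ldots, e_{k_j}^{(j)}$ and an eigenvalue $\lambda_j \in \wb\K$ with $|\lambda_j| = \rho$, satisfying $\alpha_{\wb\K}(e_1^{(j)}) = \lambda_j e_1^{(j)}$ and $\alpha_{\wb\K}(e_i^{(j)}) = \lambda_j e_i^{(j)} + e_{i-1}^{(j)}$ for $2 \leq i \leq k_j$.

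Next, I would rescale the Jordan basis so that the nilpotent off-diagonal contribution becomes strictly dominated by $\rho$ in absolute value. Pick $c \in \K^\times$ with $0 < |c| < \rho$ (possible because $\K$ is non-discrete, so $|\K^\times|$ is dense in $(0,\infty)$), and set $f_i^{(j)} := c^{i-1} e_i^{(j)}$. A direct computation gives $\alpha_{\wb\K}(f_i^{(j)}) = \lambda_j f_i^{(j)} + c f_{i-1}^{(j)}$ for $i \geq 2$, and $\alpha_{\wb\K}(f_1^{(j)}) = \lambda_j f_1^{(j)}$. Now define a supremum norm relative to this rescaled basis: $\|\sum_{i,j} a_{i,j} f_i^{(j)}\| := \max_{i,j}|a_{i,j}|$, using the canonical extension of $|.|$ to $\wb\K$. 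This is ultrametric by construction.

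I would then verify that $\|\alpha_{\wb\K} x\| = \rho \|x\|$ for every $x \in F$. The coefficient of $f_i^{(j)}$ in $\alpha_{\wb\K} x$ is $\lambda_j a_{i,j} + c a_{i+1,j}$ (with the convention $a_{k_j+1, j} := 0$), whose absolute value is at most $\max(\rho |a_{i,j}|, |c||a_{i+1,j}|) \leq \rho\|x\|$, giving $\|\alpha_{\wb\K} x\| \leq \rho\|x\|$. For equality, fix a block index $j^*$ and the largest index $i^*$ within that block satisfying $|a_{i^*, j^*}| = \|x\|$; then $|\lambda_{j^*} a_{i^*, j^*}| = \rho \|x\|$ while $|c a_{i^*+1, j^*}| \leq |c| \|x\| < \rho \|x\|$ (with $a_{i^*+1, j^*}$ either zero or having strictly smaller absolute value by maximality of $i^*$), so the domination principle forces $|\lambda_{j^*} a_{i^*, j^*} + c a_{i^*+1, j^*}| = \rho\|x\|$. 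Finally, the restriction of $\|.\|$ to $E_\rho \subset F$ is an ultrametric $\K$-norm $\|.\|_\rho$ (the three defining properties are inherited, and definiteness holds since a nonzero element of $E_\rho$ is nonzero in $F$); since $\alpha_{\wb\K}$ agrees with $\alpha$ on $E_\rho$, we conclude $\|\alpha(x)\|_\rho = \rho \|x\|_\rho$ for all $x \in E_\rho$.

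The main subtlety is obtaining the exact scaling $\|\alpha_{\wb\K} x\| = \rho\|x\|$ rather than a mere inequality: this is what forces the rescaling, so that the off-diagonal perturbation has absolute value strictly less than $\rho$, together with the careful choice of the extremal index $i^*$ so that the domination principle applies to the decisive coefficient. A minor point requiring attention is that the construction genuinely takes place over $\wb\K$ (since Jordan decomposition needs algebraic closure) and one recovers the required $\K$-norm on $E_\rho$ only at the very end, via restriction.
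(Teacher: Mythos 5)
Your argument is correct. Note, however, that the paper does not prove Lemma~\ref{normrho} at all: it simply quotes \cite[Lemma~4.4]{SUR} (with the remark that the proof there does not need $\alpha$ to be an automorphism). So what you have produced is a self-contained substitute for that citation. Your route -- pass to $(E_{\wb{\K}})_\rho=E_\rho\otimes_\K\wb{\K}$, take a Jordan basis, rescale it by powers of a small scalar $c$ so that the off-diagonal part is strictly dominated, use the domination principle~(\ref{domi}) to get the exact identity $\|\alpha_{\wb{\K}}(x)\|=\rho\|x\|$, and finally restrict the $\wb{\K}$-norm to the $\K$-form $E_\rho$ -- is exactly the same rescaled-Jordan-basis trick the paper itself uses for the nilpotent case in Lemma~\ref{rhozero}, extended over the algebraic closure; an alternative standard argument (write $\alpha|_{(E_{\wb{\K}})_{(\lambda)}}=\lambda(\id+n)$ with $n$ nilpotent and take $\|x\|':=\max_k\|n^k x\|$, which makes the unipotent factor an isometry) avoids the rescaling but is not more elementary. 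All the key points are handled correctly: the choice of the extremal index so that the decisive coefficient has absolute value exactly $\rho\|x\|$ (in fact, since $|c|<\rho$ strictly, any index attaining $\|x\|$ works and the maximality of $i^*$ is not needed), the injectivity of $E_\rho\to E_\rho\otimes_\K\wb{\K}$, and the fact that $\alpha_{\wb{\K}}$ restricts to $\alpha$ on $E_\rho$. One small inaccuracy: your justification for the existence of $c$ ("$\K$ non-discrete, so $|\K^\times|$ is dense in $(0,\infty)$") is false in general -- e.g.\ $\Q_p$ has discrete value group $p^{\Z}$ -- but the conclusion you need is still true, since the absolute value is non-trivial and a sufficiently high power of any element of absolute value $<1$ gives $c\in\K^\times$ with $0<|c|<\rho$.
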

The next lemma takes care of the case $\rho=0$.
%
\begin{la}\label{rhozero}
Let $E$ be a finite-dimensional vector space over a complete
ultrametric field $(\K,|.|)$
and $\alpha\colon E\to E$ be a nilpotent
linear map. Let $\ve>0$.
Then there exists an ultrametric norm $\|.\|$ on~$E$
with respect to which $\alpha$ has operator norm $<\ve$.
\end{la}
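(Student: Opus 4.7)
The plan is to construct a weighted coordinate norm adapted to a basis in which $\alpha$ is strictly upper triangular. Since $\alpha$ is nilpotent, say $\alpha^n=0$, the ascending kernel filtration
\[
\{0\}\subseteq \ker\alpha\subseteq \ker\alpha^2\subseteq\cdots\subseteq\ker\alpha^n=E
\]
exhausts $E$. Successively extending bases of the $\ker\alpha^k$ produces a basis $e_1,\ldots,e_d$ of $E$ satisfying $\alpha(e_i)\in\Spann(e_1,\ldots,e_{i-1})$ for each $i$, since $\alpha$ maps $\ker\alpha^k$ into $\ker\alpha^{k-1}$. In particular, writing $\alpha(e_i)=\sum_{j<i}a_{ji}e_j$, the matrix of $\alpha$ in this basis is strictly upper triangular.

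I would then set $A:=\max_{j<i}|a_{ji}|$ (the case $A=0$ reducing to $\alpha=0$, where any norm works) and, for a real parameter $c>1$ to be chosen, define
\[
\Bigl\|\sum_i x_ie_i\Bigr\|_c\;:=\;\max_i|x_i|\,c^i.
\]
Since $|\cdot|$ is ultrametric, this weighted maximum is an ultrametric norm on $E$. A direct estimate, using $c^j=c^i\cdot c^{j-i}$ together with $c^{j-i}\leq c^{-1}$ whenever $i>j$, shows that
\[
\|\alpha(x)\|_c\;\leq\;\max_{j<i}\bigl(|x_i|c^i\bigr)\,|a_{ji}|\,c^{j-i}\;\leq\;A\,c^{-1}\,\|x\|_c,
\]
so the operator norm of $\alpha$ with respect to $\|\cdot\|_c$ is at most $Ac^{-1}$.

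Choosing $c>\max\{1,A/\ve\}$ therefore yields $\|\alpha\|<\ve$, completing the argument. There is no real obstacle here: the only non-trivial input is the existence of a filtration-adapted basis putting $\alpha$ in strictly upper triangular form (immediate from $\alpha(\ker\alpha^k)\subseteq\ker\alpha^{k-1}$), after which the weighted max norm construction is routine and the parameter $c$ can be made as large as needed to absorb the finitely many coefficients $a_{ji}$.
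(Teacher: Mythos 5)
Your proof is correct, and it takes a mildly but genuinely different route from the paper. The paper first puts the nilpotent map into Jordan normal form, writing $E=\bigoplus_j E_j$ with $\alpha|_{E_j}$ a single Jordan block, rescales the Jordan basis of each block by powers of a scalar $\lambda\in\K$ with $0<|\lambda|<\ve$ (possible since the absolute value is non-trivial), and takes the maximum norm with respect to the rescaled basis; this makes $\alpha$ act on each block as $\lambda$ times a shift, so the operator norm is exactly $|\lambda|<\ve$, and the norm obtained is a coordinate maximum norm with values in $|\K|$. You instead avoid Jordan theory altogether: the flag $\ker\alpha\sub\ker\alpha^2\sub\cdots\sub E$ already yields a basis in which $\alpha$ is strictly upper triangular, and you then use a weighted maximum norm $\|\sum_i x_ie_i\|_c=\max_i|x_i|c^i$ with a \emph{real} parameter $c>1$, getting $\|\alpha\|\leq Ac^{-1}$ with $A$ the largest matrix entry. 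Your estimate is correct (the $j$-th coordinate of $\alpha(x)$ is $\sum_{i>j}a_{ji}x_i$, and the ultrametric inequality plus $c^{j-i}\leq c^{-1}$ gives the bound), and using real weights is legitimate: an ultrametric norm is not required to take values in $|\K|$, and in finite dimension it automatically defines the right topology, which is all that Proposition~\ref{propadapt} needs. What each approach buys: yours is more elementary (only the kernel filtration, no normal form) and makes the dependence on $\ve$ completely explicit through the single parameter $c$; the paper's version stays inside $\K$-rescalings of an actual basis, so the resulting norm is a plain coordinate max norm, which is slightly more concrete but requires the (standard) Jordan decomposition of a nilpotent endomorphism.
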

\begin{proof}
Assume first that there exists a basis $v_1,\ldots, v_m$
of $E$ with respect to which $\alpha$
has Jordan normal form with a single Jordan block,
i.e., $\alpha(v_1)=0$ and $\alpha(v_k)=v_{k-1}$
for $k\in \{2,\ldots, m\}$.
The case $E=\{0\}$ being trivial,
we may assume that $m\geq 1$.
Choose $\lambda\in \K$ such that $0<|\lambda|<\ve$
and define $w_k:=\lambda^k v_k$
for $k\in \{1,\ldots, m\}$.
Then $\alpha(w_k)=\lambda^k v_{k-1}=\lambda w_{k-1}$
for $k\in\{2,\ldots, m\}$ and $\alpha(w_1)=0$,
entailing that $\alpha$ has operator norm
$<\ve$ with respect to the maximum norm $\|.\|$
on $E$ with respect to the basis $w_1,\ldots, w_m$,
\[
\left\|\sum_{k=1}^m t_kw_k\right\|\; :=\;
\max\{|t_k|\colon k=1,\ldots, m\}\quad \mbox{for $\, t_1,\ldots, t_m\in \K$.}
\]
In the general case, we write
$E$ as a direct sum $\bigoplus_{j=1}^nE_j$
of $\alpha$-invariant
vector subspaces $E_j\sub E$
such that the Jordan decomposition of $\alpha|_{E_j}$
has a single Jordan block.
For each $j$,
there exists
an ultrametric norm $\|.\|_j$ on $E_j$
with respect to which $\alpha|_{E_j}$ has operator
norm $<\ve$,
by the above special case.
Then $\alpha$ has operator norm $<\ve$
with respect to the ultrametric norm $\|.\|$ on~$E$
given by $\|v_1+\cdots +v_n\|:=\max\{\|v_j\|_j\colon j=1,\ldots, n\}$
for $v_j\in E_j$.
\end{proof}
{\bf Proof of Proposition~\ref{propadapt}.}
For each $\rho\in R(\alpha)\setminus\{0\}$, we choose a norm $\|.\|_\rho$
on $E_\rho$ as described in Lemma~\ref{normrho}.
Lemma~\ref{rhozero}
provides an ultrametric norm $\|.\|_0$
on $E_0$, with respect to which
$\alpha|_{E_0}$ has operator norm $<\ve$.
Then
\[
\Big\| \sum_{\rho\in R(\alpha)} x_\rho\Big\| \; :=
\; \max\, \big\{ \,\|x_\rho\|_\rho\colon \rho\in R(\alpha)\,\big\}\quad
\mbox{for $\,(x_\rho)_{\rho\in R(\alpha)} \in \prod_{\rho\in R(\alpha)}
E_\rho$}
\]
defines a norm $\|.\|\colon E\to[0,\infty[$
which, by construction, is adapted to~$\alpha$
and with respect to which
$\alpha|_{E_0}$ has operator norm $<\ve$.\,\vspace{3mm}\Punkt

\noindent
In the finite-dimensional case,
the next corollary (and its proof) provide
lucid interpretations for $a$-hyperbolicity,
and also for
$a$-centre-stable and $a$-centre subspaces.
%
\begin{cor}\label{spechyper}
Let $E$ be a finite-dimensional
vector space over a complete ultrametric field $(\K,|.|)$.
Let $\alpha\colon E\to E$ be a linear map,
$R(\alpha)$ be as in {\rm\ref{findiset}},
and $a>0$.
Then the following holds:
\begin{itemize}
\item[\rm(a)]
$E$ admits an $a$-centre-stable
subspace with respect to $\alpha$.
\item[\rm(b)]
If $\alpha$ is invertible,
then $E$ admits an $a$-centre subspace
with respect~to~$\alpha$.
\item[\rm(c)]
$\alpha$ is $a$-hyperbolic
if and only if $a\not\in R(\alpha)$.
\end{itemize}
\end{cor}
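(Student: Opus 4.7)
The plan is to deduce all three assertions from the adapted norm provided by Proposition~\ref{propadapt}. Fix such a norm $\|.\|$ on $E = \bigoplus_{\rho \in R(\alpha)} E_\rho$: it satisfies $\|\sum_\rho x_\rho\| = \max_\rho \|x_\rho\|$, restricts to a scaling by $\rho$ on each $E_\rho$ with $\rho > 0$, and can be arranged (by Proposition~\ref{propadapt}) so that $\|\alpha|_{E_0}\| < \varepsilon$ for any prescribed $\varepsilon > 0$. I will bunch summands together according to whether the index $\rho$ is $\leq a$, $< a$, $= a$, or $> a$, and verify the norm conditions of Definitions~\ref{defahyp}, \ref{defcsub}, \ref{defacentre} directly from Definition~\ref{defnadpt}.

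For~(a), set $E_{a,\obcs} := \bigoplus_{\rho \leq a} E_\rho$ and $E_{a,\obu} := \bigoplus_{\rho > a} E_\rho$, taking the adapted norm with $\varepsilon \leq a$. The max-norm property~(a) of Definition~\ref{defcsub} is immediate. Each $E_\rho$ with $\rho > a$ consists of an $\alpha$-scaling by $\rho > 0$, so $\alpha_2$ is a bijection on the finite-dimensional $E_{a,\obu}$ with $\|\alpha_2^{-1}\| = 1/\rho_0$ where $\rho_0 := \min(R(\alpha) \cap \;]a, \infty[)$ (and $\|\alpha_2^{-1}\| = 0$ if this set is empty), hence $1/\|\alpha_2^{-1}\| = \rho_0 > a$. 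On $E_{a,\obcs}$ the operator norm of $\alpha_1$ is at most $a$ summand by summand. Part~(b) is analogous: invertibility of~$\alpha$ forces $0 \notin R(\alpha)$ and $E_0 = 0$; set $E_{a,\obs} := \bigoplus_{\rho < a} E_\rho$, $E_{a,\obc} := E_a$, $E_{a,\obu} := \bigoplus_{\rho > a} E_\rho$, and observe that $\|\alpha(x)\| = a\|x\|$ on $E_a$ by condition~(c) of Definition~\ref{defnadpt}; the strict inequalities in Definition~\ref{defacentre}(c) hold since the relevant finite subsets of $R(\alpha)$ stay bounded away from~$a$.

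For~(c), the $(\Leftarrow)$ direction uses the same recipe as~(a), but now with strict inequality $\|\alpha_1\| < a$: since $R(\alpha)$ is finite, $\max(R(\alpha) \cap [0, a[) < a$ when nonempty, and $\alpha|_{E_0}$ has norm $< \varepsilon < a$ by choice of~$\varepsilon$. For $(\Rightarrow)$, suppose $\alpha$ is $a$-hyperbolic via a norm $\|.\|'$ and decomposition $E = E'_{a,\obs} \oplus E'_{a,\obu}$, and suppose for contradiction that $a \in R(\alpha)$, choosing $0 \neq x \in E_a$. In the adapted norm, $\|\alpha^n(x)\| = a^n \|x\|$, so $\|\alpha^n(x)\|'/a^n$ is bounded above and below by positive constants (the two norms are equivalent on the finite-dimensional space~$E$). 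Writing $x = x_s + x_u$ in the $a$-hyperbolic decomposition, property~(a) of Definition~\ref{defahyp} yields $\|\alpha^n(x)\|' = \max\{\|\alpha^n(x_s)\|', \|\alpha^n(x_u)\|'\}$. But $\|\alpha^n(x_s)\|'/a^n \to 0$ (using $\|\alpha_1\|' < a$), while if $x_u \neq 0$ then $\|\alpha^n(x_u)\|'/a^n \to \infty$ (using $1/\|\alpha_2^{-1}\|' > a$). Either way $\|\alpha^n(x)\|'/a^n$ tends to~$0$ or $\infty$, contradicting boundedness. Hence $E_a = 0$.

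The main obstacle is the $(\Rightarrow)$ direction of~(c), which requires comparing two a priori unrelated ultrametric norms on~$E$; equivalence of norms on a finite-dimensional $\K$-vector space together with the sharp growth/decay dichotomy imposed by Definition~\ref{defahyp}(a),(c) closes the argument cleanly.
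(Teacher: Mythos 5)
Your proposal is correct and follows essentially the same route as the paper: an adapted norm from Proposition~\ref{propadapt} with the groupings $\bigoplus_{\rho\leq a}$, $\bigoplus_{\rho<a}$, $E_a$, $\bigoplus_{\rho>a}$ for (a), (b) and the ``if'' part of (c), and for the ``only if'' part the same contradiction via norm equivalence, comparing the exact $a^n$-scaling on $E_a$ with the strict decay on $E_{a,\obs}$ and strict growth on $E_{a,\obu}$ in the hyperbolic norm. The only cosmetic difference is that you phrase the contradiction in terms of boundedness of $a^{-n}\|\alpha^n(x)\|'$, whereas the paper splits $v=x+y$ and rules out $y\neq 0$ and $y=0$ separately; the content is identical.
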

\begin{proof}
By Proposition~\ref{propadapt},
there exists an ultrametric norm $\|.\|\wt{\;}$
on $E$ which is adapted to~$\alpha$,
and with respect to which
$\alpha|_{E_0}$ has operator norm $<a$.

(a) The conditions
from Definition~\ref{defcsub}
are satisfied with $\|.\|:=\|.\|\wt{\;}$
and
%
\begin{equation}\label{sodecacs}
E_{a,\obcs}:=\bigoplus_{\rho\leq a} E_\rho\quad\mbox{ and }\quad
E_{a,\obu}:= \bigoplus_{\rho >a} E_\rho\,.
\end{equation}

(b)
The conditions
of Definition~\ref{defacentre}
are satisfied with $\|.\|:=\|.\|\wt{\;}$ and
%
\begin{equation}\label{sodecac}
E_{a,\obs}:=\bigoplus_{\rho< a} E_\rho,\quad
E_{a,\obc}:=E_a,\quad
\quad\mbox{ and }\quad
E_{a,\obu}:= \bigoplus_{\rho >a} E_\rho\,.
\end{equation}

(c) If $a\not\in R(\alpha)$,
then the conditions
of Definition~\ref{defahyp}
are satisfied with $\|.\|:=\|.\|\wt{\;}$
and
%
\begin{equation}\label{soahyp}
E_{a,\obs}:=\bigoplus_{\rho< a} E_\rho \quad
\quad\mbox{ and }\quad
E_{a,\obu}:= \bigoplus_{\rho >a} E_\rho\,.
\end{equation}
If $a\in R(\alpha)$,
then $\alpha$ cannot be $a$-hyperbolic.
In fact,
if $\alpha$ was $a$-hyperbolic,
we obtain a norm $\|.\|$
and a splitting $E=E_{a,\obs}\oplus E_{a,\obu}$
as in Definition~\ref{defahyp}.
Let $\alpha_1:=\alpha|_{E_{a,\obs}}$ and
$\alpha_2:=\alpha|_{E_{a,\obu}}$.
Since $\|.\|$ and $\|.\|\wt{\;}$
are equivalent, there exists
$C>0$ such that $C^{-1}\|.\|\leq \|.\|\wt{\;}\leq C\|.\|$.
Let $0\not=v\in E_a$.
Write $v=x+y$ with $x\in E_{a,\obs}$ and $y\in E_{a,\obu}$.
If $y\not=0$, then
\[
\|v\|\wt{\;}=a^{-n}\|\alpha^n(v)\|\wt{\;}
\geq
a^{-n} C^{-1}\|\alpha^n(v)\|
\geq C^{-1} \left(\frac{1}{a \|\alpha_2^{-1}\|}\right)^n \|y\|
\]
for all $n\in \N$, which is absurd because
$\frac{1}{a\|\alpha_2^{-1}\|}>1$.
Hence $y=0$ and thus $x=v\not=0$.
But then
\[
\|v\|\wt{\;}=a^{-n}\|\alpha^n(v)\|\wt{\;}
\leq a^{-n} C \|\alpha^n(v)\|
\leq C \left(\frac{\|\alpha_1\|}{a}\right)^n\|v\|
\quad
\mbox{for all $\, n\in \N$.}
\]
Since $\frac{\|\alpha_1\|}{a}<1$,
this is absurd. Thus $\alpha$
cannot be $a$-hyperbolic.
\end{proof}
\begin{rem}
If $E$ is an infinite-dimensional
Banach space over a complete
ultrametric field
and $\alpha\colon E\to E$
an automorphism,
let $E_{\wb{\K}}$
be the completed
projective tensor product
$E\, \wb{\otimes}_\K \, \wb{\K}$
(which is an ultrametric Banach space
over $\wb{\K}$)
and $\alpha_{\wb{\K}}:=\alpha\, \wb{\tensor} \, \id_{\wb{\K}}$.
It is natural to define
$R(\alpha):=\{|\lambda|\colon \lambda\in
\sigma(\alpha_{\wb{\K}})\}$ in this case,
where $\sigma(\alpha_{\wb{\K}})$
is the set of all
$\lambda\in \wb{\K}$
such that $\alpha_{\wb{\K}}-\lambda \id$
is not invertible.
The author conjectures
that $\alpha$ is $a$-hyperbolic if
and only if $a\not\in R(\alpha)$,
and moreover that $E$ has an $a$-centre-stable
subspace (resp., an $a$-centre subspace)
if and only if there exists $\ve>0$ such that
$]a,a+\ve[\;\cap R(\alpha)=\emptyset$
(resp.,
$]a,a+\ve[\;\cap R(\alpha)=\emptyset$
and $]a-\ve,a[\;\cap R(\alpha)=\emptyset$).
These topics require further investigation.
The functional calculus
from \cite{Esc} should be useful.
\end{rem}
\section{Behaviour close to a fixed point}\label{seccon}
%
We now relate
the behaviour of a dynamical system $(M,f)$
around a fixed point~$p$
and properties of the linear map $T_p(f)$.
The results (and those from Sections~\ref{notonly} through~\ref{seclie})
are useful for Lie theory
(see \cite{MaZ} and \cite{SPO};
cf.\ \cite{SUR}).
%
%
\begin{numba}\label{situnow}
Let $M$ be an analytic manifold modelled on an ultrametric Banach space
over a complete ultrametric field $(\K,|.|)$.
Let $f\colon M_0\to M$ be an analytic
mapping on an open subset $M_0\sub M$ and $p\in M_0$ be a fixed
point of $f$, such that $T_p(f)\colon T_p(M)\to T_p(M)$
is an automorphism.
\end{numba}
%
%
%
%
\begin{prop}\label{sin}
In \,{\rm\ref{situnow}},
the following conditions are equivalent:
\begin{itemize}
\item[\rm(a)]
$T_p(M)$ admits a centre-stable
subspace with respect to $T_p(f)$,
and
each neighbourhood $P$ of $p$ in $M_0$ contains a neighbourhood $Q$
of $p$ such that $f(Q)\sub Q$.
\item[\rm(b)]
There exists a norm $\|.\|$ on $T_p(M)$ defining its topology,
such that $\|T_p(f)\|\leq 1$ holds for the corresponding operator norm.
\end{itemize}
If, moreover, $M$ is a finite-dimensional
manifold, then {\rm(a)} and {\rm(b)}
are also equivalent to the following condition:
\begin{itemize}
\item[\rm(c)]
Each eigenvalue $\lambda$ of $T_p(f)\otimes_\K \id_{\wb{\K}}$
in an algebraic closure $\wb{\K}$ of $\K$ has absolute value
$|\lambda|\leq 1$.
\end{itemize}
\end{prop}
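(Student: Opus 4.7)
The plan is to establish the cycle (b)~$\Rightarrow$~(a)~$\Rightarrow$~(b), and then the extra equivalence (b)~$\Leftrightarrow$~(c) in the finite-dimensional case by means of Proposition~\ref{propadapt}. The nontrivial step will be (a)~$\Rightarrow$~(b); the other implications are routine.

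For (b)~$\Rightarrow$~(a), the decomposition $E_\obcs:=T_p(M)$, $E_\obu:=\{0\}$ satisfies Definition~\ref{defcsub} because $\|T_p(f)\|\leq 1$. For the neighbourhood property I pick a chart $\kappa\colon P\to V\sub E$ with $\kappa(p)=0$ and $d\kappa(p)=\id_E$; since $g:=\kappa\circ f\circ\kappa^{-1}$ satisfies $g(0)=0$ and $\|g'(0)\|=\|T_p(f)\|\leq 1$, Remark~\ref{behaveba}(b) gives $g(B_s^E(0))\sub B_s^E(0)$ for all small $s>0$, and the preimages $\kappa^{-1}(B_s^E(0))$ form a cofinal family of $f$-invariant neighbourhoods of~$p$.

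For (a)~$\Rightarrow$~(b), I start with a centre-stable decomposition $E=E_\obcs\oplus E_\obu$ and norm $\|.\|$ as in Definition~\ref{defcsub}, writing $A:=T_p(f)|_{E_\obcs}$ and $B:=T_p(f)|_{E_\obu}$. It suffices to show $E_\obu=\{0\}$, for then $\|T_p(f)\|=\|A\|\leq 1$. Suppose instead $E_\obu\neq\{0\}$ and set $c:=1/\|B^{-1}\|>1$. In a local chart about~$p$, the map $f$ is represented on some $B_r^E(0)$ by $g(x,y)=(Ax,By)+\wt g(x,y)$; shrinking~$r$ via Remark~\ref{remstrict}(d), I may assume $\Lip(\wt g)<c$. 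By hypothesis~(a) there is an $f$-invariant neighbourhood $Q\sub P$, and its image $\kappa(Q)$ contains $(0,y_0)$ for some $y_0\in E_\obu$ with $0<\|y_0\|$. The estimate $\|By_0\|\geq c\|y_0\|>\Lip(\wt g)\|y_0\|\geq\|\wt g_2(0,y_0)\|$ combined with the domination principle~(\ref{domi}) yields $\|g(0,y_0)\|\geq\|By_0\|\geq c\|y_0\|$, while $\|\wt g_1(0,y_0)\|\leq\Lip(\wt g)\|y_0\|<c\|y_0\|\leq\|By_0\|$ shows that the $E_\obu$-component strictly dominates. An easy induction, using $\|A\|\leq 1<c$ to propagate dominance along the orbit, gives $\|g^n(0,y_0)\|\geq c^n\|y_0\|$ for every $n\in\N_0$, contradicting $g^n(0,y_0)\in\kappa(Q)\sub B_r^E(0)$.

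In the finite-dimensional setting, Proposition~\ref{propadapt} is the key tool. Applied with $\ve:=1$ to $\alpha:=T_p(f)$, it yields an ultrametric norm $\|.\|\wt{\;}$ adapted to~$\alpha$ under which $\|\alpha|_{E_0}\|\wt{\;}<1$, $\|\alpha(x)\|\wt{\;}=\rho\|x\|\wt{\;}$ for $x\in E_\rho$ and $\rho\in R(\alpha)\setminus\{0\}$, and therefore $\|\alpha\|\wt{\;}=\max\big\{\|\alpha|_{E_0}\|\wt{\;},\;\max_{\rho\in R(\alpha)\setminus\{0\}}\rho\big\}$. Hence~(c) immediately implies $\|\alpha\|\wt{\;}\leq 1$, giving (c)~$\Rightarrow$~(b). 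Conversely, if~(b) holds, then $\|\alpha^n\|$ stays bounded in the given norm; equivalence of norms on the finite-dimensional space~$E$ then bounds $\|\alpha^n\|\wt{\;}$ uniformly, and since $\|\alpha^n\|\wt{\;}\geq\rho^n$ for each $\rho\in R(\alpha)$ by the adapted-norm identity above, every such $\rho$ must satisfy $\rho\leq 1$, which is~(c). The main obstacle throughout is (a)~$\Rightarrow$~(b): since centre-stability provides only the non-strict estimate $\|A\|\leq 1$, Theorem~\ref{locumfd} is unavailable at $a=1$, so one instead exploits the strict expansion of~$B$ via the domination principle to construct orbits escaping every alleged $f$-invariant neighbourhood.
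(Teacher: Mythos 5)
Your proposal is correct, but the decisive implication (a)$\Rightarrow$(b) is proved by a genuinely different route than in the paper. The paper passes to $g:=f^{-1}$ (using that $T_p(f)$ is an automorphism and the inverse function theorem), notes that $E_{1,\obu}$ is the stable subspace for $T_p(g)$, invokes Theorem~\ref{locstamfd} to produce a nontrivial local $b$-stable manifold $N$ for $g$, picks $x\in N\setminus\{p\}$ with $g^n(x)\to p$, and contradicts the existence of an $f$-invariant neighbourhood $Q$ avoiding $x$. You instead argue directly in a chart: starting from a point $(0,y_0)$ with $0\neq y_0\in E_{1,\obu}$ inside an invariant neighbourhood, you use $\|By\|\geq c\|y\|$ with $c=1/\|B^{-1}\|>1$, $\Lip(\wt g)<c$, $\|A\|\leq 1$ and the domination principle~(\ref{domi}) to show by induction that the $E_{1,\obu}$-component keeps dominating and grows like $c^n\|y_0\|$, so the orbit escapes every ball -- contradicting invariance. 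This is more elementary and self-contained (it never uses $f^{-1}$, only the invertibility of $B$ built into Definition~\ref{defcsub}), whereas the paper's argument is shorter given the machinery already established and reuses the stable-manifold theorem. Similarly, your (b)$\Rightarrow$(a) via Remark~\ref{behaveba}\,(b) is more elementary than the paper's appeal to Theorem~\ref{csthm}\,(c), and your finite-dimensional equivalence (b)$\Leftrightarrow$(c) via the adapted norm of Proposition~\ref{propadapt} (together with equivalence of norms) is essentially the same computation the paper compresses into the citation of Remark~\ref{unicst} and~(\ref{sodecacs}). Only one point deserves explicit mention when writing this up: in the escaping-orbit argument you should state that the invariant neighbourhood $Q$ is chosen inside $\kappa^{-1}(B^E_r(0))$ (hypothesis~(a) permits this), so that the whole orbit stays where the local representation $g=(A\oplus B)+\wt g$ with $\Lip(\wt g)<c$ is valid; as written, ``$Q\sub P$'' leaves this slightly implicit.
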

\begin{proof}
(b) means that $E:=T_p(M)$ coincides with
its centre-stable subspace with respect to $\alpha:=T_p(f)$.
If $E$ is finite-dimensional,
this property is equivalent to $R(\alpha)\sub \;]0,1]$
and hence to~(c),
by Remark~\ref{unicst}
and~(\ref{sodecacs}).
If~(b) holds, then (a) follows with Theorem~\ref{csthm}\,(c).

(a)$\impl$(b): If (a) holds,
then $E$ admits a decomposition
$E=E_{1,\obcs}\oplus E_{1,\obu}$
and a norm $\|.\|$, as described in Definition~\ref{defcsub}
(with $a=1$). After shrinking $M_0$, we may
assume that $M_1:=f(M_0)$ is open in $M$ and $f \colon M_0\to M_1$
is a diffeomorphism (by the Inverse Function Theorem).

If $E_{1,\obu}\not=\{0\}$,
we let $P\sub M_0\cap M_1$
be an open neighbourhood of $p$ such that $f(P)\sub P$,
and consider the map $g:=f^{-1}\colon M_1\to M$.
Then $E_{1,\obu}$ is the stable subspace of~$E$
with respect to $T_p(g)=\alpha^{-1}$.
Pick $b\in \;] \|\alpha^{-1}|_{E_{1,\obu}}\| ,1[$.
Then $\alpha^{-1}$ is $b$-hyperbolic, and
\[
E_{b,\obs}=E_{1,\obu} \quad \mbox{ as well as  }\quad E_{b,\obu}=E_{1,\obcs}
\]
(with respect to the automorphisms $\alpha^{-1}$ and $\alpha$
on the left and right of the equality signs, respectively).
By Theorem~\ref{locstamfd}
(applied to $g|_P\colon P\to M$),
there exists a local $b$-stable
manifold $N\sub P$ with respect to~$g$,
such that $g^n(x)\to p$ as $n\to\infty$,
for all $x\in N$.
Since $N$ is tangent to
$E_{1,\obu}\not=\{0\}$,
we have $N\not=\{p\}$
and thus find a point $x\in N\setminus\{p\}$.
By hypothesis~(a), there is
an open $p$-neighbourhood $Q\sub P\setminus \{x\}$
with $f(Q)\sub Q$.
Since $g^n(x)\to p$,
there exists $m\in \N$ with
$y:=g^m(x)\in Q$.
Then $x=f^m(y)\in f^m(Q)\sub Q$,
contradicting the choice of~$Q$.
Hence $E_{1,u}=\{0\}$ (and thus (b) holds).
\end{proof}
%
%
%
\begin{prop}\label{typeR}
In \,{\rm\ref{situnow}},
the following conditions are equivalent:
\begin{itemize}
\item[\rm(a)]
$T_p(M)$ admits a centre subspace
with respect to $T_p(f)$, and
each\linebreak
neighbourhood $P$ of $p$ in $M_0$ contains a neighbourhood $Q$
of $p$ such that $f(Q)=Q$.
\item[\rm(b)]
There exists a norm $\|.\|$ on $T_p(M)$ defining its topology,
which makes $T_p(f)$ an isometry.
\end{itemize}
If, moreover, $M$ is a finite-dimensional
manifold, then {\rm(a)} and {\rm(b)}
are also equivalent to the following condition:
\begin{itemize}
\item[\rm(c)]
Each eigenvalue $\lambda$ of $T_p(f)\otimes_\K \id_{\wb{\K}}$
in an algebraic closure $\wb{\K}$ of $\K$ has absolute value
$|\lambda|=1$.
\end{itemize}
\end{prop}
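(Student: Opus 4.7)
The plan is to adapt the proof of Proposition~\ref{sin}, replacing the centre-stable splitting by the finer centre splitting of Definition~\ref{defacentre} and handling the stable and unstable pieces separately. For (b)$\Rightarrow$(a): if $\|.\|$ makes $\alpha:=T_p(f)$ an isometry, then $E:=T_p(M)$ itself serves as the centre subspace (with $E_\obs=E_\obu=\{0\}$), so Theorem~\ref{cthm}\,(c) produces inside any neighbourhood of~$p$ an open~$\Omega$ with $f(\Omega)=\Omega$ and $T_p(\Omega)=E=T_p(M)$; hence $\Omega$ is open in $M$, which is~(a).

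For (a)$\Rightarrow$(b), I would fix the decomposition $E=E_\obs\oplus E_\obc\oplus E_\obu$ and the norm from Definition~\ref{defacentre}, and shrink $M_0$ so that $f\colon M_0\to M_1:=f(M_0)$ is an analytic diffeomorphism. Since $\alpha$ is an automorphism preserving each summand, unique direct-sum decomposition forces each of $\alpha|_{E_\obs},\alpha|_{E_\obc},\alpha|_{E_\obu}$ to be an automorphism of its subspace, so in particular $(\alpha|_{E_\obu})^{-1}$ is well-defined with norm $<1$. The goal is $E_\obs=E_\obu=\{0\}$. If $E_\obs\neq\{0\}$, pick $b\in(\|\alpha|_{E_\obs}\|,1)$; the splitting $E=E_\obs\oplus(E_\obc\oplus E_\obu)$ witnesses $b$-hyperbolicity of $\alpha$ (the centre block is isometric and the unstable block satisfies $\tfrac{1}{\|(\alpha|_{E_\obu})^{-1}\|}>1>b$), so Theorem~\ref{locstamfd} furnishes a local $b$-stable manifold $\Omega$ tangent to $E_\obs\neq\{0\}$, hence a point $x\in\Omega\setminus\{p\}$ whose entire forward orbit lies in $M_0$ and satisfies $f^n(x)\to p$. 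By (a), choose $Q\subseteq M_0\setminus\{x\}$ with $f(Q)=Q$; the injectivity of $f|_{M_0}$ together with $f(Q)=Q$ forces $f^{-1}(Q)\cap M_0=Q$, so the forward orbit of $x$ entering $Q$ and staying in $M_0$ allows one to pull back finitely many steps to $x\in Q$, a contradiction. If $E_\obu\neq\{0\}$, pick $b'\in(\|(\alpha|_{E_\obu})^{-1}\|,1)$; then $\alpha^{-1}$ is $b'$-hyperbolic with $b'$-stable subspace $E_\obu$, so Theorem~\ref{locstamfd} applied to $f^{-1}\colon M_1\to M_0$ yields an $x\neq p$ with $f^{-n}(x)\to p$. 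Now $f(Q)=Q$ gives $f^m(Q)=Q$ for all $m\geq 0$, and $f^{-m}(x)\in Q$ for large $m$ forces $x=f^m(f^{-m}(x))\in Q$, a contradiction. Thus $E=E_\obc$, and the norm from Definition~\ref{defacentre}\,(b) makes $\alpha$ an isometry, which is~(b).

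In the finite-dimensional case, (b)$\Rightarrow$(c) because an isometry satisfies $\|\alpha\|=\|\alpha^{-1}\|=1$, which bounds the spectral radii of $\alpha_{\wb{\K}}$ and its inverse by~$1$ and forces every eigenvalue to have absolute value~$1$; conversely, (c) means $R(\alpha)=\{1\}$, so $E_0=\{0\}$ and the adapted norm supplied by Proposition~\ref{propadapt} satisfies $\|\alpha(x)\|=1\cdot\|x\|$ on $E_1=E$, giving (b). The chief technical obstacle is the pull-back step in the stable case of (a)$\Rightarrow$(b): arranging $M_0$ so that $f|_{M_0}$ is injective, and exploiting that points in the local stable manifold have their full forward orbit in $M_0$, so that the identity $f^{-1}(Q)\cap M_0=Q$ can be applied finitely many times to walk from $f^N(x)\in Q$ back to $x\in Q$; everything else is either the symmetric argument for $f^{-1}$ or a direct application of the machinery already developed.
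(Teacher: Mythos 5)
Your proof is correct and follows essentially the same route as the paper: (b)$\Rightarrow$(a) via Theorem~\ref{cthm}\,(c), the finite-dimensional equivalence with (c) via adapted norms and $R(\alpha)$, and (a)$\Rightarrow$(b) by killing $E_{1,\obs}$ and $E_{1,\obu}$ using local stable manifolds for $f$ and $f^{-1}$ against the invariant-neighbourhood hypothesis. The only difference is presentational: the paper gets $E_{1,\obu}=\{0\}$ and $E_{1,\obs}=\{0\}$ by citing the implication (a)$\Rightarrow$(b) of Proposition~\ref{sin} for $f$ and for $f^{-1}$, whereas you inline that argument directly (your pull-back step via $f^{-1}(Q)\cap M_0=Q$ is just the unwound form of applying it to $f^{-1}$).
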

\begin{proof}
(b) means that $E:=T_p(M)$ coincides with
its centre subspace with respect to $\alpha:=T_p(f)$.
If $E$ is finite-dimensional,
this property is equivalent to $R(\alpha)\sub \{1\}$
and hence to~(c),
by Remark~\ref{udecen}
and~(\ref{sodecac}).
If~(b) holds, then~(a) follows with
Theorem~\ref{cthm}\,(c).

(a)$\impl$(b):
After shrinking $M_0$, we may
assume that $M_1:=f(M_0)$ is open in $M$ and $f \colon M_0\to M_1$
is a diffeomorphism.
If (a) holds,
then there is a decomposition
$E=E_{1,\obs}\oplus E_{1,\obc}\oplus E_{1,\obu}$
and a norm $\|.\|$, as in Definition~\ref{defacentre}
(with $a=1$). By ``(a)$\impl$(b)''
in Proposition~\ref{sin},
we have $E_{1,\obu}=\{0\}$.
Applying
Proposition~\ref{sin}
to $g:=f^{-1}\colon M_1\to M$,
we see that also $E_{1,\obs}=\{0\}$
(because this is the unstable subspace of $T_p(M)$
with respect to $T_p(g)=\alpha^{-1}$).
Thus $E=E_{1,\obc}$,
establishing~(b).
\end{proof}
The proofs show that $Q$ can always be chosen
as an open subset of $M_0$, in part (a)
of Proposition \ref{sin} and \ref{typeR}.
%
\begin{defn}\label{typesfp}
In the situation of {\rm\ref{situnow}},
we use the following terminology:
\begin{itemize}
\item[(a)]
$p$ is said to be an \emph{attractive} fixed point
of $f$ if $p$  has a neighbourhood
$P\sub M_0$ such that $f^n(x)$ is defined
for all $x\in P$ and $n\in \N$,
and $\lim_{n\to\infty}f^n(x)=p$ for all $x\in P$.
\item[(b)]
We say that $p$ is \emph{uniformly attractive}
if it is attractive and, moreover,
every neighbourhood of~$p$ in~$M_0$
contains a neighbourhood~$Q$ of~$p$ such that
$f(Q)\sub Q$.
\end{itemize}
\end{defn}
%
%
\begin{prop}\label{unicon}
In \,{\rm\ref{situnow}},
the following conditions are equivalent:
\begin{itemize}
\item[\rm(a)]
$T_p(M)$ admits a centre subspace with respect
to $T_p(f)$, and $p$ is\linebreak
uniformly attractive;
\item[\rm(b)]
There exists a norm $\|.\|$ on $T_p(M)$ defining its topology,
such that $\|T_p(f)\|<1$ holds for the corresponding operator norm.
\end{itemize}
If, moreover, $M$ is a finite-dimensional
manifold, then {\rm(a)} and {\rm(b)}
are also equivalent to the following condition:
\begin{itemize}
\item[\rm(c)]
Each eigenvalue $\lambda$ of $T_p(f)\otimes_\K \id_{\wb{\K}}$
in an algebraic closure $\wb{\K}$ of $\K$ has absolute value
$|\lambda| < 1$.
\end{itemize}
\end{prop}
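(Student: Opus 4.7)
The strategy is to follow the template of Propositions~\ref{sin} and~\ref{typeR}, splitting the equivalence into the finite-dimensional spectral characterisation (b)$\iff$(c), the easy implication (b)$\implies$(a), and the substantive implication (a)$\implies$(b). For (b)$\iff$(c) in the finite-dimensional case, the argument mirrors Corollary~\ref{spechyper}: given (c), all elements of $R(\alpha)$ (for $\alpha:=T_p(f)$) lie in $[0,1[$, so Proposition~\ref{propadapt} yields an adapted norm whose restriction to each nonzero $E_\rho$ scales by $\rho$ and whose restriction to $E_0$ has operator norm $<\ve$; choosing $\ve$ strictly below $1$ and using finiteness of $R(\alpha)$ gives $\|\alpha\|<1$, i.e.~(b); conversely $r(\alpha)\leq\|\alpha\|<1$ forces (c). The implication (b)$\implies$(a) is easy: the trivial splitting $E_{1,\obs}:=T_p(M)$, $E_{1,\obc}=E_{1,\obu}:=\{0\}$ is a centre subspace, and to exhibit uniform attractivity one reads $f$ in a chart $\kappa$ around $p$ as $g=\alpha+\wt g$ with $\wt g(0)=0$, $\wt g\,'(0)=0$, shrinks via Remark~\ref{remstrict}\,(d) so that $\Lip(\wt g)<1$, and then applies Remark~\ref{behaveba}\,(a) to obtain $g(B^E_s(0))\sub B^E_{bs}(0)\sub B^E_s(0)$ for some $b<1$ and all sufficiently small $s$; the sets $\kappa^{-1}(B^E_s(0))$ then form a basis of $f$-invariant open neighbourhoods of $p$ on which $f^n\to p$.

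\textbf{The implication (a)$\implies$(b).} Given (a), fix a centre-subspace decomposition $E=E_{1,\obs}\oplus E_{1,\obc}\oplus E_{1,\obu}$ and compatible norm as in Definition~\ref{defacentre}. The subspace $E_{1,\obs}\oplus E_{1,\obc}$ is a $1$-centre-stable subspace (the norm conditions of Definition~\ref{defcsub} with $a=1$ are satisfied), and uniform attractivity in particular supplies the arbitrarily small $f$-invariant neighbourhoods demanded in Proposition~\ref{sin}\,(a). Proposition~\ref{sin} thus provides a (possibly new) norm on $E$ with $\|\alpha\|\leq 1$, in which $E$ itself is a $1$-centre-stable subspace; since $E_{1,\obcs}$ is characterised norm-independently as the set of $x$ with bounded forward orbit (Remark~\ref{unicst}), it follows that $E_{1,\obs}\oplus E_{1,\obc}=E$, hence $E_{1,\obu}=\{0\}$. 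To eliminate the centre part, invoke Theorem~\ref{cthm}\,(c) to obtain a centre manifold $\Omega$ around~$p$ which is a \emph{submanifold} of~$M$, stable under~$f$, with $f|_\Omega\colon \Omega\to\Omega$ a diffeomorphism. In a chart of~$M$ adapted to the submanifold~$\Omega$, the restriction $f|_\Omega$ is represented by an analytic self-map of a ball in $E_{1,\obc}$ whose derivative at $0$ is $\alpha|_{E_{1,\obc}}$, a surjective linear isometry by Definition~\ref{defacentre}\,(b). Remark~\ref{behaveba}\,(c) then says that this chart representation is itself an isometry on some ball $B^{E_{1,\obc}}_s(0)$. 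Consequently, any nonzero point of $\Omega$ sufficiently close to~$p$ has an $f$-orbit of constant positive norm in the chart, and hence does not converge to~$p$ in~$M$; this contradicts attractivity. Therefore $E_{1,\obc}=\{0\}$ as well, so $E=E_{1,\obs}$ and $\|\alpha\|=\|\alpha|_{E_{1,\obs}}\|<1$, which is~(b).

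\textbf{Main obstacle.} The delicate step is the elimination of a nontrivial centre subspace: one must convert the infinitesimal statement that $\alpha|_{E_{1,\obc}}$ is an isometry into a genuine failure of convergence in the ambient manifold~$M$. The key ingredient is the existence assertion in Theorem~\ref{cthm}\,(c), which upgrades the a priori merely immersed centre manifold to an honest submanifold $\Omega\sub M$ with $f|_\Omega$ a diffeomorphism; this is exactly what makes the intrinsic chart for~$\Omega$ compatible with an ambient chart of~$M$, so that Remark~\ref{behaveba}\,(c) yields orbits in~$\Omega$ whose images in~$M$ provably stay bounded away from~$p$. Without this submanifold improvement, comparing the two topologies on a merely immersed centre manifold would make the contradiction considerably harder to obtain.
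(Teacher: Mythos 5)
Your proposal is correct, and on the one substantive implication it takes a genuinely different route from the paper. The paper's printed proof of Proposition~\ref{unicon} only records (b)$\Leftrightarrow$(c) (via the adapted-norm/spectral dictionary, essentially as you do, though your appeal to a spectral-radius bound over $\wb{\K}$ tacitly requires extending the norm to $E_{\wb{\K}}$, where the paper instead quotes the characterization of the stable subspace and (\ref{sodecac})) and (b)$\Rightarrow$(a), the latter by invoking Theorem~\ref{locstamfd}: the local stable manifold is tangent to all of $T_p(M)$, hence open, $f$-invariant and attracted to $p$; your direct shrinking-ball argument via Remark~\ref{remstrict}\,(d) and Remark~\ref{behaveba}\,(a) is a more elementary substitute. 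The converse (a)$\Rightarrow$(b) is not spelled out in the printed proof (it follows the pattern of Propositions~\ref{sin} and~\ref{typeR} together with the Remark immediately after Proposition~\ref{unicon}); there your use of Proposition~\ref{sin} to force $E_{1,\obu}=\{0\}$ matches the paper's scheme, but your elimination of the centre direction differs: you take the centre manifold $\Omega$ furnished by Theorem~\ref{cthm}\,(c) --- an honest submanifold with $f(\Omega)=\Omega$ --- and use Remark~\ref{behaveba}\,(c) to see that orbits of nonzero points of $\Omega$ near $p$ have constant positive norm in a chart, contradicting attractivity, whereas the paper argues combinatorially with two nested $f$-stable centre manifolds $S\sub Q\setminus\{x_0\}$ and injectivity of $f$, so that the orbit of $x_0$ is trapped in $Q\setminus S$ and stays away from~$p$. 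Both devices rest on exactly the feature you single out, namely the submanifold-plus-stability conclusion of Theorem~\ref{cthm}\,(c), which makes the chart topology of the centre manifold the one induced from $M$; both need only attractivity (not uniformity) for this step, and for completeness you should arrange $\Omega$ inside the attractivity neighbourhood $P$, which Theorem~\ref{cthm}\,(c) permits. Your isometric-orbit argument is the more quantitative of the two, while the paper's nested-manifold trick avoids any appeal to the isometry beyond what is already packaged into the stability statement $f(S)=S$.
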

\begin{proof}
(b) means that $E:=T_p(M)$ coincides with
its stable subspace with respect to $\alpha:=T_p(f)$.
If $E$ is finite-dimensional,
this property is equivalent to $R(\alpha)\sub \;]0,1[$
and hence to~(c),
by Remark~\ref{hypsuniq}
and~(\ref{sodecac}).
If (b) holds and $P\sub M_0$
is an open neighbourhood of~$p$,
then Theorem~\ref{locstamfd}
(applied to $f|_P$ instead of $f$)
provides a local stable manifold $N\sub P$
such that $\lim_{n\to\infty}f^n(x)=p$ for all $x\in N$. 
Because $T_p(N)=E=T_p(M)$,
it follows that~$N$ is open in~$M$.
Since, moreover, $f(N)\sub N$ by definition of $N$,
we have verifed that $p$ is uniformly attractive.
\end{proof}
\begin{rem}
If $p$ is merely attractive (but possibly not
uniformly) and $E:=T_p(M)$
admits a centre subspace with respect to $T_p(f)$,
we can still conclude that $E_{1,\obc}=\{0\}$.\\[2.5mm]
[After shrinking $M_0$, we may assume
that $f$ is injective.
Let $P\sub M_0$ be as in Definition~\ref{typesfp}\,(a).
If $E_{1,\obc}\not=\{0\}$, we let $Q\sub P$
be a centre manifold with respect to~$f$,
such that $f(Q)=Q$ (see Theorem~\ref{cthm}\,(c)).
Since $E_{1,\obc}\not=\{0\}$, we must have $Q\not=\{p\}$,
enabling us to pick $x_0\in Q\setminus\{p\}$.
Using
Theorem~\ref{cthm}\,(c)
again, we find
a centre manifold $S\sub Q\setminus\{x_0\}$
with respect to~$f$,
such that $f(S)=S$.
Since $f$ is injective, it follows that
$f(Q\setminus S)=Q\setminus S$
and thus $f^n(x_0)\in Q\setminus S$ for all $n\in \N_0$.
As $Q$ is a neighbourhood of~$p$,
we infer $f^n(x_0)\not\to p$ as $n\to\infty$.
Since $x_0\in P$, this contradicts the choice of~$P$.\,]
\end{rem}
\section{When {\boldmath$W_a^\obs(f,p)$} is
not only immersed}\label{notonly}
%
%
In general, $W_a^\obs$ is only an \emph{immersed}
submanifold of $M$,
not a submanifold
(cf.\
\cite[\S7.1]{SUR}
for an easy example).
We now describe
a criterion
(needed in~\cite{MaZ})
which prevents such pathologies.
%
%
\begin{prop}\label{Bangd}
Let $M$ be an analytic manifold modelled on an
ultrametric Banach space over a complete
ultrametric field.
Let $p\in M$ be a fixed point of
an analytic diffeomorphism
$f\colon M\to M$, such that
$E:=T_p(M)$ admits a centre-stable subspace
with respect to $T_p(f)$,
and $E_{1,\obu}=\{0\}$.
Then $W_a^\obs (f,p)$ is a submanifold
of~$M$, for each $a\in \;]0,1]$
such that $T_p(f)$ is $a$-hyperbolic.
\end{prop}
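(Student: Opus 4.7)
The plan is to embed $W_a^\obs$ by realising it, on a trapping neighbourhood of~$p$, as the local $a$-stable manifold~$\Omega$, and then propagating via backward iterates of~$f$. First, I would extract a chart adapted to both the centre-stable and $a$-hyperbolic structures. Since $E_{1,\obu}=\{0\}$, Proposition~\ref{sin} supplies an ultrametric norm $\|.\|$ on $E:=T_p(M)$ with $\|T_p(f)\|\leq 1$; in finite dimensions Proposition~\ref{propadapt} permits choosing this norm simultaneously adapted to the $a$-hyperbolic decomposition $E=E_{a,\obs}\oplus E_{a,\obu}$ (so $\|A\|<a$, $\|B\|\leq 1$, $\|B^{-1}\|<1/a$, where $A,B$ are the restrictions of $T_p(f)$), and an analogous adaptation works in general. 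Pick a chart $\kappa\colon P\to U\subseteq E$ with $\kappa(p)=0$, $d\kappa(p)=\id_E$, and put $g:=\kappa\circ f\circ\kappa^{-1}$; Remark~\ref{behaveba}\,(b) provides $r>0$ with $g(B^E_r(0))\subseteq B^E_r(0)$, so $Q:=\kappa^{-1}(B^E_r(0))$ is $f$-trapping. Shrinking $r$ to bring $g$ into the setup of Theorem~\ref{mainthm}, one obtains an analytic $\phi\colon B^{E_{a,\obs}}_r(0)\to B^{E_{a,\obu}}_r(0)$ whose graph~$\Gamma$ gives the local $a$-stable manifold $\Omega:=\kappa^{-1}(\Gamma)$, a submanifold of~$M$ (Theorem~\ref{locstamfd}).

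The crux is the identity $W_a^\obs(f,p)\cap Q=\Omega$. The inclusion $\supseteq$ is Theorem~\ref{locstamfd}\,(c). For $\subseteq$, let $x\in W_a^\obs\cap Q$ and write $z_n:=g^n(\kappa(x))=(u_n,v_n)\in E_{a,\obs}\oplus E_{a,\obu}$: trapping gives $z_n\in B^E_r(0)$ for every $n$, while $x\in W_a^\obs$ gives $a^{-n}\|z_n\|\to 0$. Iterating the recursion $v_n=B^{-1}(v_{n+1}-\tilde g_2(z_n))$ in the unstable direction and using $a\|B^{-1}\|<1$ to eliminate the tail $\|B^{-n}v_n\|\leq\|B^{-1}\|^n\|z_n\|=o((a\|B^{-1}\|)^n)$, one derives $v_0=-\sum_{k=0}^\infty B^{-(k+1)}\tilde g_2(z_k)$ and, iterating the same estimate at every later time, a uniform bound on the $\cS_a$-norm of the orbit in terms of $\|\kappa(x)\|$. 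After one last shrinking of~$r$, $(z_n)_{n\in\N_0}$ then lies in the Irwin ball $\cU:=B^{\cS_a(E)}_r(0)$ from the proof of Theorem~\ref{mainthm}\,(b); the identity $G((z_n))=((u_0,0),(0,0),\ldots)$ and the bijectivity of $G$ established there then force $(z_n)=w(u_0)$, hence $\kappa(x)=z_0=(u_0,\phi(u_0))\in\Gamma$, and $x\in\Omega$.

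Granted the claim, for $x\in W_a^\obs$ Theorem~\ref{ultrastabm} provides $n\geq 0$ with $f^n(x)\in\Omega$, so $x\in f^{-n}(Q)$, an open subset of~$M$; the analytic diffeomorphism $f^n\colon f^{-n}(Q)\to Q$ carries $W_a^\obs\cap f^{-n}(Q)=f^{-n}(W_a^\obs\cap Q)=f^{-n}(\Omega)$ onto the submanifold $\Omega\subseteq Q$, so $W_a^\obs\cap f^{-n}(Q)$ is itself a submanifold of the open set $f^{-n}(Q)\subseteq M$; since $x$ was arbitrary, $W_a^\obs$ is a submanifold of~$M$. The hardest part is bridging the gap between the asymptotic decay $a^{-n}\|z_n\|\to 0$ given by $x\in W_a^\obs$ and the uniform $\cS_a$-bound $\sup_n a^{-n}\|z_n\|<r$ that Irwin's machinery requires: $f$-trapping (coming from $E_{1,\obu}=\{0\}$) keeps the orbit bounded, while the unstable-direction iteration (driven by $a\|B^{-1}\|<1$ from $a$-hyperbolicity) controls the unstable component in $\cS_a$-norm, and it is the combination of these two mechanisms, each requiring one of the two hypotheses, that performs the required upgrade.
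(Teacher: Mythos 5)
Your overall architecture is sound and your globalization step is essentially the paper's: once $W_a^\obs\cap Q$ is identified with a submanifold inside a trapping neighbourhood $Q$, pulling back along the diffeomorphisms $f^{-n}$ and covering $W_a^\obs$ by the open sets $f^{-n}(Q)$ finishes the proof. But at the decisive step you take a genuinely different, and much heavier, route. The paper never re-enters Irwin's machinery: it shrinks $Q$ so that, besides $f(Q)\sub Q$ (from ``(b)$\impl$(a)'' in Proposition~\ref{sin}), also $\Omega\cap Q\sub f(\Omega)$ holds (possible because $f$ restricts to a diffeomorphism of $W_a^\obs$, so $f(\Omega)$ is open in $\Omega$), and then proves $W_a^\obs\cap Q\sub\Omega\cap Q$ by a minimality argument: if $x\in W_a^\obs\cap Q\setminus\Omega$, choose $n\geq 1$ minimal with $f^n(x)\in\Omega$ (it exists since $W_a^\obs=\bigcup_m f^{-m}(\Omega)$ and $f^m(x)\in Q$ for all $m$); then $f^n(x)\in\Omega\cap Q\sub f(\Omega)$ gives $f^{n-1}(x)\in\Omega$, a contradiction. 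This is purely dynamical, needs no common norm and no new estimates, and works verbatim in the Banach setting.

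Your version, by contrast, rests on two assertions that you do not prove. First, you need a single ultrametric norm in which simultaneously the $a$-hyperbolic splitting is a max-splitting with $\|A\|<a$ and $\|B^{-1}\|<1/a$, \emph{and} $\|T_p(f)\|\leq 1$, so that the very ball on which Irwin's estimates hold is also trapping. Proposition~\ref{propadapt} gives this only in finite dimensions; ``an analogous adaptation works in general'' is a real claim needing an argument (it can be done, e.g.\ by keeping the hyperbolic norm on $E_{a,\obs}$ and replacing the norm on $E_{a,\obu}$ by $\|v\|':=\sup_{n\geq 0}\|B^nv\|$, which is finite and equivalent because the hypothesis $E_{1,\obu}=\{0\}$ supplies a norm making $T_p(f)$ non-expansive, and which satisfies $\|B\|'\leq 1$ and $\|B^{-1}\|'\leq\max\{1,\|B^{-1}\|\}<1/a$ for $a<1$, the case $a=1$ being trivial). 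Second, the crux---upgrading $a^{-n}\|z_n\|\to 0$ together with $\|z_n\|<r$ to $\sup_n a^{-n}\|z_n\|<r$---is only gestured at: the series $v_n=-\sum_k B^{-(k+1)}\wt{g}_2(z_{n+k})$ bounds the unstable components by the \emph{future} of the whole orbit, while the stable components are controlled by the \emph{past}, so the displayed tail estimate alone does not yield your ``uniform bound in terms of $\|\kappa(x)\|$''. It is true, but needs an interleaving argument, e.g.: with $S_u:=\sup_n a^{-n}\|u_n\|$ and $S_v:=\sup_n a^{-n}\|v_n\|$ (finite, since both sequences tend to $0$), the series gives $S_v\leq \Lip(\wt{g})\|B^{-1}\|\max\{S_u,S_v\}$ with $\Lip(\wt{g})\|B^{-1}\|<a\|B^{-1}\|<1$, forcing $S_v<S_u$ unless both vanish; then, since $S_u$ is attained, the recursion $\|u_{n+1}\|\leq\max\{\|A\|\,\|u_n\|,\Lip(\wt{g})\|z_n\|\}<a\|z_n\|$ forces attainment at $n=0$, whence $\sup_n a^{-n}\|z_n\|=\|u_0\|\leq\|\kappa(x)\|<r$, so $\kappa(x)\in\Gamma$ directly by (\ref{thebiggam}) (the detour through $G$ is then unnecessary). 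With these two points filled in your argument is correct and even yields the sharper identity $W_a^\obs\cap Q=\Omega$; the paper's soft argument buys the same conclusion with a fraction of the work.
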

\begin{proof}
Let $W_a^\obs:=W_a^\obs(f,p)$ and
$\Omega \sub W_a^\obs$ be as
in Theorem~\ref{ultrastabm}.
Since $f$ restricts to a diffeomorphism of $W_a^\obs$,
the image $f(\Omega)$ is relatively
open in $\Omega$.
Hence,
there exists an open $p$-neighbourhood $Q\sub M$
such that $\Omega\cap Q\sub f(\Omega)$.
By ``(b)$\impl$(a)'' in Proposition \ref{sin},
we may assume that $f(Q)\sub Q$,
after replacing $Q$ with a smaller
neighbourhood of $p$ is necessary.
We claim that
%
\begin{equation}\label{goclaim}
W_a^\obs \cap Q
=\Omega\cap Q\,.
\end{equation}
If this is true, then
$W_a^\obs \cap Q$
is a submanifold
of $M$, and hence also
\[
f^{-n}(W_a^\obs)\cap f^{-n}(Q)=W_a^\obs\cap f^{-n}(Q)
\]
is a submanifold of~$M$ (as $f^{-n}\colon M\to M$
is a diffeomorphism).
Since $\bigcup_{n\in \N_0}f^{-n}(Q)$ is an open subset
of~$M$ which contains $W_a^\obs$ (exploiting that
$f^n(x)\in Q$ for large~$n$, for each $x\in W_a^\obs$),
we deduce that $W_a^\obs$ is a submanifold of~$M$
(and the submanifold structure
coincides with the immersed submanifold structure
constructed earlier).\\[2.5mm]
To prove (\ref{goclaim}), suppose
that $x\in W_a^\obs\cap Q$ but $x\not\in \Omega\cap Q$
(and hence $x\not\in \Omega$).
Since $f(Q)\sub Q$, we then have
\[
f^n(x)\in Q\quad\mbox{for all $\,n\in \N_0$.}
\]
By definition of $\Omega$, there exists
$n\in \N_0$ such that $f^n(x)\in \Omega$.
We choose $n$ minimal and note that
$n\geq 1$ as $x\not\in \Omega$
by hypothesis.
Then $f^n(x)\in \Omega\cap Q\sub f(\Omega)$
and hence $f^{n-1}(x)=f^{-1}(f^n(x)) \in f^{-1}(f(\Omega))=\Omega$,
contradicting the minimality of~$n$.
Hence $x$ cannot exist
and thus $W_a^\obs\cap Q\sub \Omega\cap Q$.
The converse inclusion, $\Omega\cap Q\sub W_a^\obs\cap Q$,
being trivial,
(\ref{goclaim}) is proved.
\end{proof}
\section{Further conclusions in the\\
\hspace*{.2mm}finite-dimensional case}\label{adepend}
%
%
We collect further results
which are easily available
in the finite-dimensional case
(and required in \cite{MaZ}).
In particular, we
study the dependence of $a$-stable manifolds
on the parameter~$a$.
%
%
\begin{prop}\label{adepprop}
Let $M$ be an analytic manifold modelled on a
finite-\linebreak
dimensional vector space over a complete
ultrametric field $(\K,|.|)$.
Let $p\in M$ be a fixed point of
an analytic diffeomorphism
$f\colon M\to M$.
Abbreviate $\alpha:=T_p(f)$
and define $R(\alpha)$ as in {\rm\ref{findiset}}.
Then the following holds:
\begin{itemize}
\item[\rm(a)]
If $R(\alpha)\sub \;]0,1]$, then
$W_a^\obs(f,p)$ is a submanifold
of $M$, for each $a\in \;]0,1]\setminus R(\alpha)$.
\item[\rm(b)]
If $0<a<b\leq 1$ and $[a,b]\cap R(\alpha)=\emptyset$,
then $W_a^\obs(f,p)=W_b^\obs(f,p)$.
\item[\rm(c)]
If $a\in \;]0,1]$ and $\,]0,a]\cap R(\alpha)=\emptyset$,
then $W_a^\obs(f,p)=\{p\}$.
\end{itemize}
\end{prop}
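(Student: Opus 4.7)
The plan is to reduce all three parts to the finite-dimensional spectral picture of $\alpha := T_p(f)$ by fixing an ultrametric norm $\|\cdot\|$ on $E := T_p(M)$ adapted to $\alpha$, as supplied by Proposition~\ref{propadapt}. Then $E = \bigoplus_{\rho \in R(\alpha)} E_\rho$ with each $E_\rho$ an $\alpha$-invariant $\K$-subspace on which $\|\alpha|_{E_\rho}\| = \rho$, and the $\rho = 0$ summand does not appear since $\alpha$ is an automorphism. For any $c > 0$ with $c \notin R(\alpha)$, the formulas from the proof of Corollary~\ref{spechyper} identify $E_{c,\obs} = \bigoplus_{\rho < c} E_\rho$ and $E_{c,\obu} = \bigoplus_{\rho > c} E_\rho$.

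For (a), I would note that $R(\alpha) \subseteq \;]0,1]$ yields $E_{1,\obu} = \{0\}$ and makes all of $E$ a centre-stable subspace, while $a \in \;]0,1] \setminus R(\alpha)$ gives $a$-hyperbolicity by Corollary~\ref{spechyper}(c). Proposition~\ref{Bangd} then immediately yields that $W_a^{\obs}(f,p)$ is a submanifold of~$M$.

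For (b), the gap hypothesis $[a,b] \cap R(\alpha) = \emptyset$ forces $\{\rho \in R(\alpha) : \rho < a\} = \{\rho \in R(\alpha) : \rho < b\}$, so the $a$- and $b$-hyperbolic decompositions coincide: set $E_1 := E_{a,\obs} = E_{b,\obs}$, $E_2 := E_{a,\obu} = E_{b,\obu}$, $A := \alpha|_{E_1}$ and $B := \alpha|_{E_2}$. The adapted norm gives $\|A\| < a$ and $\|B^{-1}\|^{-1} > b$, the latter because the smallest element of $R(\alpha)$ strictly above $a$ already lies above $b$. Choosing a chart $\kappa$ around $p$ with $d\kappa(p) = \id_E$ and transferring $f$ to $g := \kappa \circ f \circ \kappa^{-1}$ on some ball $B^E_r(0)$, I would shrink $r$ until $\Lip(\wt{g}) < a$, so that the hypotheses of Section~\ref{secirw} hold simultaneously for $a$ and $b$ with the same splitting $E = E_1 \oplus E_2$. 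Theorem~\ref{mainthm}(d) then asserts that the set $\Gamma$ produced by Irwin's construction is the same whether we use $a$ or $b$, so the local $a$-stable manifold $\Omega := \kappa^{-1}(\Gamma)$ equals the local $b$-stable manifold. Invoking the global description $W_c^{\obs}(f,p) = \bigcup_{n \geq 0} f^{-n}(\Omega)$ from Theorem~\ref{ultrastabm} for $c \in \{a,b\}$ concludes~(b).

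For (c), $\;]0,a] \cap R(\alpha) = \emptyset$ forces $E_{a,\obs} = \{0\}$, and $a$-hyperbolicity holds since $a \notin R(\alpha)$. In the local construction of Theorem~\ref{locstamfd}, $\Gamma$ is the graph of an analytic $\phi \colon B_r^{E_1}(0) \to E_2$ with $\phi(0) = 0$ (Theorem~\ref{mainthm}(b)); since $E_1 = \{0\}$, the graph reduces to $\{0\}$ and the local $a$-stable manifold is $\Omega = \{p\}$. Because $f$ is a diffeomorphism with $f(p) = p$, we have $f^{-n}(\{p\}) = \{p\}$ for every $n \geq 0$, hence $W_a^{\obs}(f,p) = \bigcup_{n \geq 0} f^{-n}(\Omega) = \{p\}$. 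The only non-routine step is (b): it requires simultaneously securing both operator-norm estimates $\|A\| < a$ and $\|B^{-1}\|^{-1} > b$ with one norm and one decomposition, which is exactly where the gap hypothesis $[a,b] \cap R(\alpha) = \emptyset$ enters.
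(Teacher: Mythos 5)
Your proposal is correct and follows essentially the same route as the paper: (a) via Proposition~\ref{Bangd} with Corollary~\ref{spechyper}, (b) by fixing an adapted norm, observing that the gap hypothesis makes the $a$- and $b$-hyperbolic splittings coincide and that Irwin's local set $\Gamma$ is the same for both parameters, then using $W_c^\obs=\bigcup_n f^{-n}(\Omega)$ from (the proof of) Theorem~\ref{ultrastabm}, and (c) from $E_{a,\obs}=\{0\}$. The only cosmetic difference is that you invoke Theorem~\ref{mainthm}\,(d) where the paper uses parts (e) and (f) of that theorem, which amounts to the same graph-comparison argument.
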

\begin{proof}
(a) follows from Proposition~\ref{Bangd}
(using Corollary~\ref{spechyper} and~(\ref{sodecac})).

(b) Let $E$ and $\kappa$ be as in \ref{reusable}
(with $M_0:=M$), $\|.\|$ be a norm on
$E$ adapted to $\alpha:=T_p(f)$,
and $R(\alpha)$ as well as the subspaces
$E_\rho\sub E$ for $\rho>0$ be as in~\ref{findiset}.
By hypothesis on $a$ and $b$, we have
\[
X :=\bigoplus_{\rho<a}E_\rho=\bigoplus_{\rho<b}E_\rho
\quad\mbox{ and }\quad
Y :=\bigoplus_{\rho>a}E_\rho=\bigoplus_{\rho>b}E_\rho\,.
\]
Hence $E_{a,\obs}=E_{b,\obs}=X$
and $E_{a,\obu}=E_{b,\obu}=Y$,
by~(\ref{soahyp}).
Now let $\Omega_a$ and $\Omega_b$
be an $\Omega$ as in Theorem~\ref{ultrastabm},
applied with $a$ and $b$, respectively.
By Theorem~\ref{mainthm}\,(f) and the proof of
Theorem~\ref{ultrastabm},
we may assume that $\Omega_a=\kappa^{-1}(\Gamma_a)$
and $\Omega_b=\kappa^{-1}(\Gamma_b)$,
where
%
\begin{eqnarray}
\Gamma_a & =& \{ z\in B_r^E(0)\colon \mbox{($\forall n\in \N_0$) $g^n(z)$
is defined and $\|g^n(z)\|\leq a^nr$}\}\; \mbox{and}\notag\\
\Gamma_b & =& \{ z\in B_t^E(0)\colon \mbox{($\forall n\in \N_0$) $g^n(z)$
is defined and $\|g^n(z)\|\leq b^nt$}\}\label{graess}
\end{eqnarray}
for certain $r,t>0$ and $g$ is as in (\ref{reusable}).
Moreover, by Theorem~\ref{mainthm}\,(e),
we may assume that $r=t$, after replacing each
by $\min\{r,t\}$.
Then $\Gamma_a\sub \Gamma_b$
by~(\ref{graess}),
and hence $\Gamma_a=\Gamma_b$
(since both sets are graphs of functions on the
same domain, by Theorem \ref{mainthm}). 
Thus $\Omega_a=\Omega_b$,
entailing that $W_a^\obs(f,p)=W_b^\obs(f,p)$
as a set and also as an immersed submanifold
of~$M$ (cf.\ proof of Theorem~\ref{ultrastabm}).

(c) By~(\ref{soahyp}), we have
$E_{a,\obs}=\bigoplus_{\rho<a}E_\rho=\{0\}$,
whence $\Omega=\kappa^{-1}(\Gamma)=\{p\}$
in Theorem~\ref{ultrastabm} and its proof.
Thus $W_a^\obs(f,p)=\bigcup_{n\in \N_0}f^{-n}(\Omega)=\{p\}$.
\end{proof}
\section{\!\!\!Specific results concerning automorphisms\\
\hspace*{-2.9mm}of Lie groups}\label{seclie}
%
%
Throughout this section,
$G$ is an analytic Lie group
modelled on an ultrametric Banach space
over a complete ultrametric field $(\K,|.|)$,
and $\alpha\colon G\to G$ an analytic automorphism.
Then the neutral element $1\in G$ is a fixed point of~$\alpha$,
and hence our general theory applies.
We now compile some
additional conclusions which are specific to
automorphisms. Like results of the previous
sections, these are needed for the farther-reaching
Lie-theoretic applications described in the introduction.\\[2.5mm]
We begin with a corollary to Proposition~\ref{unicon}.
An automorphism $\alpha\colon G\to G$
is called \emph{contractive}
if $\lim_{n\to\infty}\alpha^n(x)=1$ for each $x\in G$.
\begin{cor}
If $G$ is finite-dimensional
and $\alpha\colon G\to G$ a contractive\linebreak
automorphism,
then every eigenvalue $\lambda$ of
$ L(\alpha) \tensor_\K  \id_{\wb{\K}}$ in an algebraic\linebreak
closure~$\wb{\K}$
has absolute value $|\lambda|<1$.
\end{cor}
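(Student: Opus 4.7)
The plan is to invoke the Ultrametric Stable Manifold Theorem after first eliminating the possibility of a non-trivial centre subspace. Since $L(G)$ is finite-dimensional and $L(\alpha)$ is an automorphism (as the differential of the diffeomorphism~$\alpha$), Corollary~\ref{spechyper}(b) provides a centre subspace $L(G)_\obc$ of $L(G)$ with respect to $L(\alpha)$.

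First I would show that $L(G)_\obc=\{0\}$. If this failed, Theorem~\ref{cthm}(c) applied at the fixed point~$1$ would yield an open neighbourhood $\Omega$ of~$1$ in a centre manifold with $\alpha(\Omega)=\Omega$, and its proof furnishes a chart $\mu\colon\Omega\to B^{L(G)_\obc}_s(0)$ under which $\alpha$ corresponds to an isometry~$h$ of $B_s(0)$ (obtained via Remark~\ref{behaveba}(c) applied to the surjective linear isometry $L(\alpha)|_{L(G)_\obc}$). Picking $x_0\in\Omega\setminus\{1\}$ (possible since $\dim L(G)_\obc\geq 1$), one has $\alpha^n(x_0)\in\Omega$ for every $n\in\N_0$ by $\alpha$-invariance, and $\|\mu(\alpha^n(x_0))\|=\|h^n(\mu(x_0))\|=\|\mu(x_0)\|>0$. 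This contradicts contractiveness, which forces $\alpha^n(x_0)\to 1$ and hence $\|\mu(\alpha^n(x_0))\|\to 0$.

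With $L(G)_\obc=\{0\}$ established, Corollary~\ref{spechyper}(c) makes $L(\alpha)$ $1$-hyperbolic, and contractiveness yields $W_1^\obs(\alpha,1)=G$ (the $a=1$ case of Definition~\ref{defwas} reduces to $\alpha^n(x)\to 1$). Theorem~\ref{ultrastabm} then supplies an open neighbourhood $\Omega$ of~$1$ in $W_1^\obs(\alpha,1)$ which is a submanifold of~$G$ of dimension $\dim L(G)_\obs$, with $W_1^\obs(\alpha,1)=\bigcup_{n\in\N_0}\alpha^{-n}(\Omega)$. Passing to a chart domain $\kappa\colon U\to B^{L(G)}_r(0)=:V$ around~$1$---where $V$ is clopen in $L(G)\cong\K^{\dim G}$ (ultrametric!) and hence a complete metric, thus Baire, space---one covers $V$ by the countably many analytic submanifolds $\kappa(U\cap \alpha^{-n}(\Omega))$, each of dimension $\dim L(G)_\obs$. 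If $\dim L(G)_\obs<\dim G$, each such submanifold has empty interior in~$V$ and is therefore nowhere dense, contradicting the Baire property of~$V$. Hence $L(G)_\obs=L(G)$, and the description of the stable subspace in~(\ref{soahyp}) translates this into $|\lambda|<1$ for every eigenvalue~$\lambda$ of $L(\alpha)\tensor_\K\id_{\wb{\K}}$.

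The most delicate step is the Baire-category argument at the end; it relies on the honest submanifold structure of $\Omega\subseteq G$ provided by Theorem~\ref{ultrastabm}, and on the ultrametric fact that open balls are clopen (hence complete). An alternative route through Proposition~\ref{unicon}, by establishing uniform attractiveness of~$1$, would require essentially the same invariant-manifold reasoning but seems less direct.
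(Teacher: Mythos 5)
Your argument is correct, but it follows a genuinely different route from the paper's. The paper's proof is very short: it observes that $G$ is complete and metrizable (citing \cite{FOR}), invokes Lemma~1\,(a) of Siebert \cite{Sie} to place an $\alpha$-invariant open subgroup inside every identity neighbourhood, concludes that $1$ is a \emph{uniformly} attractive fixed point, and then quotes the implication ``(a)$\impl$(c)'' of Proposition~\ref{unicon}. You instead stay entirely inside the dynamical machinery of the paper: you first kill the centre subspace (which exists by Corollary~\ref{spechyper}\,(b)) using the invariant, isometrically acting centre-manifold neighbourhood from the proof of Theorem~\ref{cthm}\,(c) -- this step is essentially a streamlined version of the Remark following Proposition~\ref{unicon}, which derives $E_{1,\obc}=\{0\}$ from mere attractiveness -- and you then kill the unstable part by a Baire-category argument, using that contractivity gives $G=W_1^\obs(\alpha,1)=\bigcup_n\alpha^{-n}(\Omega)$ with $\Omega$ a submanifold of dimension $\dim L(G)_{1,\obs}$ by Theorem~\ref{ultrastabm}, so that this dimension must equal $\dim G$. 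What each approach buys: the paper's route is shorter but leans on external group-theoretic input (Siebert's invariant open subgroups, completeness of $G$) and on the chain of equivalences behind Proposition~\ref{unicon} (which in turn rests on Proposition~\ref{sin}); your route avoids Siebert altogether, exploits the \emph{global} hypothesis of contractivity directly rather than converting it into uniform attractiveness near~$1$, but is specific to the finite-dimensional setting at the Baire step (positive-codimension submanifolds of a ball in $\K^d$ are locally closed with empty interior, hence nowhere dense -- a point you gloss slightly but which is standard) and uses the full Stable Manifold Theorem~\ref{ultrastabm} where the paper only needs local statements.
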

\begin{proof}
$G$ is complete by \cite[Proposition~2.1\,(a)]{FOR},
and metrizable. Since every identity neighbourhood $P$ in $G$
contains an open subgroup $U$ of $G$ (see, e.g.,  \cite[Proposition~2.1\,(a)]{FOR}),
Lemma~1\,(a) in \cite{Sie}
provides an $\alpha$-invariant open subgroup $Q:=U_{(0)}\sub U\sub P$
of $G$. Hence~$1$ is a uniformly contractive fixed
point of~$\alpha$, and thus ``(a)$\impl$(c)'' in Proposition~\ref{unicon}
applies.
\end{proof}
%
%
%
\begin{prop}\label{proprev}
If $a\in \;]0,1]$
and $L(\alpha)$ is $a$-hyperbolic,
the following holds:
\begin{itemize}
\item[\rm(a)]
The $a$-stable manifold $W_a^\obs(\alpha,1)$ is an immersed
Lie subgroup of~$G$.
\item[\rm(b)]
If, moreover, $L(G)$ admits a centre subspace
with respect to $L(\alpha)$
and $L(G)_{1,\obu}=\{0\}$,
then $W_a^\obs(\alpha,1)$ is a
Lie subgroup of~$G$.
\end{itemize}
\end{prop}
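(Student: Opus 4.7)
To prove (a), I will first show that $W := W_a^\obs(\alpha, 1)$ is a subgroup of $G$, then that the neighbourhood $\Omega$ of $1$ from Theorem~\ref{ultrastabm} can be arranged to be closed under the group operations, and finally transport analyticity to all of $W$ via the $\alpha$-action. Part (b) will then follow by combining (a) with Proposition~\ref{Bangd}.

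\emph{Subgroup property.} Fix a chart $\kappa \colon U \to V \sub E := L(G)$ with $\kappa(1) = 0$ and $d\kappa(1) = \id_E$, and an ultrametric norm $\|.\|$ on $E$. In this chart,
\[
\kappa(xy) = \kappa(x) + \kappa(y) + \mu(\kappa(x), \kappa(y)), \qquad \kappa(x^{-1}) = -\kappa(x) + \iota_2(\kappa(x)),
\]
where $\mu, \iota_2$ are analytic and $\mu(0, v) = \mu(u, 0) = 0$ (from $1 \cdot g = g \cdot 1 = g$), $\iota_2(0) = 0$, $d\iota_2(0) = 0$; differentiating the first identity forces $d\mu(0, 0) = 0$ as well. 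Strict differentiability of these analytic maps at the origin (Remark~\ref{remstrict}(d)), together with the identities $\mu(0, v) = 0 = \mu(u, 0)$, yield $\|\mu(u, v)\| \leq \epsilon \min\{\|u\|, \|v\|\}$ and $\|\iota_2(u)\| \leq \epsilon\|u\|$ on a small enough ball, for any preassigned $\epsilon < 1$. By the domination principle~(\ref{domi}) this gives $\|\kappa(xy)\| \leq \max\{\|\kappa(x)\|, \|\kappa(y)\|\}$ and $\|\kappa(x^{-1})\| = \|\kappa(x)\|$ for $x, y$ sufficiently close to $1$. Applied to the iterates $\alpha^n(x), \alpha^n(y) \to 1$, and using $\alpha \in \Aut(G)$ so that $\alpha^n(xy) = \alpha^n(x)\alpha^n(y)$ and $\alpha^n(x^{-1}) = \alpha^n(x)^{-1}$, one obtains
\[
a^{-n}\|\kappa(\alpha^n(xy))\| \leq \max\{a^{-n}\|\kappa(\alpha^n(x))\|,\, a^{-n}\|\kappa(\alpha^n(y))\|\} \to 0
\]
and the analogue for inverses, so $W$ is closed under products and inversion.

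\emph{Local analyticity and globalization.} By Theorem~\ref{ultrastabm} combined with Theorem~\ref{mainthm}(e), the open neighbourhood $\Omega$ of $1$ in $W$ that is an $\alpha$-invariant submanifold of $G$ can be chosen of the form $\kappa^{-1}(\Gamma_s)$ with $\Gamma_s$ the graph of $\phi|_{B_s^{E_1}(0)}$ and $s$ arbitrarily small. For $s$ sufficiently small, the chart estimates above propagate along $\alpha$-orbits to show that $\|\kappa(\alpha^n(uv))\| < s$ and $\|\kappa(\alpha^n(u^{-1}))\| < s$ for all $n \in \N_0$, and $a^{-n}\|\kappa(\alpha^n(uv))\|,\; a^{-n}\|\kappa(\alpha^n(u^{-1}))\| \to 0$ whenever $u, v \in \Omega$; so $uv, u^{-1}$ again lie in $\Omega$ by the characterization of $\Gamma_s$ in Theorem~\ref{mainthm}(e). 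Since $\Omega$ is a submanifold of $G$, the restrictions $m \colon \Omega \times \Omega \to \Omega$ and $\iota \colon \Omega \to \Omega$ of the analytic group operations of $G$ are analytic. For arbitrary $(x, y) \in W \times W$, choose $n$ large enough that $\alpha^n(x), \alpha^n(y) \in \Omega$ (possible since $W = \bigcup_n \alpha^{-n}(\Omega)$); by Theorem~\ref{ultrastabm}(c), $\alpha$ and $\alpha^{-1}$ restrict to analytic diffeomorphisms of $W$, so the identity $m(x, y) = \alpha^{-n}(\alpha^n(x)\alpha^n(y))$ expresses $m$ near $(x, y)$ as a composition of analytic maps, and likewise for inversion. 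This completes (a).

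\emph{Part (b).} The hypotheses yield $L(G) = L(G)_\obs \oplus L(G)_\obc$, a centre-stable decomposition with respect to $L(\alpha)$, and $L(G)_{1,\obu} = 0$. Proposition~\ref{Bangd} then gives that $W_a^\obs(\alpha, 1)$ is a submanifold of $G$, and combined with (a) it is a Lie subgroup. The main obstacle is the closure step $\Omega \cdot \Omega \sub \Omega$ and $\Omega^{-1} \sub \Omega$ in part (a): this requires the defining rate condition of $\Gamma_s$ to survive the group operations, for which the ultrametric strict-differentiability estimates on $\mu$ and $\iota_2$, combined with the $\alpha$-invariance of $\Omega$ and the fact that $\alpha$ commutes with multiplication, are essential.
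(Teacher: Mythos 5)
Your proof is correct. For part (b) you do exactly what the paper does: deduce it from Proposition~\ref{Bangd} (observing that a centre subspace with $L(G)_{1,\obu}=\{0\}$ yields a centre-stable subspace), combined with the group structure from (a). For part (a) the paper gives no argument at all -- it merely cites Proposition~4.6 of \cite{MaZ} -- whereas you supply a self-contained proof, and it is sound: strict differentiability together with the identities $\mu(u,0)=\mu(0,v)=0$ and $\iota_2(0)=0$, $d\iota_2(0)=0$ gives $\|\mu(u,v)\|\leq\varepsilon\min\{\|u\|,\|v\|\}$ and $\|\iota_2(u)\|\leq\varepsilon\|u\|$ on a small ball, whence by the domination principle $\|\kappa(xy)\|\leq\max\{\|\kappa(x)\|,\|\kappa(y)\|\}$ and $\|\kappa(x^{-1})\|=\|\kappa(x)\|$ near~$1$; applying this along the orbits $\alpha^n(x),\alpha^n(y)$ and using $\alpha^n(xy)=\alpha^n(x)\alpha^n(y)$ shows both that $W_a^\obs(\alpha,1)$ is a subgroup and that $\Omega=\kappa^{-1}(\Gamma_s)$ is stable under multiplication and inversion for small~$s$, via the rate description of $\Gamma_s$ coming from Theorem~\ref{mainthm}; the identity $m(x,y)=\alpha^{-n}\big(\alpha^n(x)\alpha^n(y)\big)$, together with Theorem~\ref{ultrastabm}\,(c) and $W_a^\obs=\bigcup_n\alpha^{-n}(\Omega)$, then transports analyticity of the group operations from $\Omega$ to the whole immersed submanifold, which is precisely what ``immersed Lie subgroup'' requires. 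The only points you leave implicit are routine: $s$ must also be small enough that products of points of $\kappa^{-1}(B_s^E(0))$ remain in the chart domain, so that $g^n(\kappa(uv))=\kappa(\alpha^n(uv))$ is meaningful at every step, and a common exponent $n$ for $x$ and $y$ exists because $\alpha(\Omega)\sub\Omega$. In effect your argument reconstructs the content of the external reference that the paper relies on, in the same spirit (ultrametric submultiplicativity of the group chart plus the orbit characterization of the local stable manifold), so it is a legitimate, more self-contained route to the same statement.
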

\begin{proof}
(a) The proof of \cite[Proposition~4.6]{MaZ}
applies without changes.\footnote{In $\diamondsuit$, read ``$\,\leq a^n\,$''
as ``$\,< a^n r$.''}

(b) is a special case of Proposition~\ref{Bangd}.
\end{proof}
If $G$ is finite-dimensional,
then the extra hypotheses
in Proposition~\ref{proprev}\,(b)
mean that $R(L(\alpha))\sub \;]0,1]$
(see Corollary~\ref{spechyper} and (\ref{sodecac})).\\[2.5mm]
In the following situation,
hyperbolicity
is not needed to make $W^\obs$
a manifold.
%
%
%
\begin{prop}\label{notmostgen}
If $\alpha\colon G\to G$ is an automorphism
and $L(G)$ admits a centre subspace with respect
to $L(\alpha)\colon L(G)\to L(G)$,
then the following holds:
\begin{itemize}
\item[\rm(a)]
There exist a local stable manifold $V_1$
and a centre manifold $V_0$ around~$1$ with respect to~$\alpha$,
and a local stable manifold $V_{-1}$ around $1$ with respect
to $\alpha^{-1}$,
such that $V_1V_0V_{-1}$ is open in $G$ and the product map
\begin{equation}\label{thepromp}
\pi \colon V_1\times V_0\times V_{-1} \to V_1V_0V_{-1}\,\quad (x,y,z)\mto xyz
\end{equation}
is an analytic diffeomorphism.
\item[\rm(b)]
There is a unique
immersed submanifold structure on
$W^\obs(\alpha,1)$
such that
conditions {\rm (a)--(c)}
of Theorem~{\rm\ref{ultrastabm}}
are satisfied.
This immersed submanifold structure
makes $W^\obs(\alpha,1)$
an immersed Lie subgroup
of~$G$,
and the final assertion of Theorem~{\rm\ref{ultrastabm}}
holds.
Moreover,
$W^\obs(\alpha,1)=W_a^\obs(\alpha,1)$
for some $a\in \;]0,1[$ such that
$L(\alpha)$ is $a$-hyperbolic.
\end{itemize}
\end{prop}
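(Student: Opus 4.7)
For part~(a), I fix an ultrametric norm $\|.\|$ on $L(G)$ adapted to the centre decomposition of $L(\alpha)$ in the sense of Definition~\ref{defacentre}, and pick $a\in\;]\|L(\alpha)|_{L(G)_{1,\obs}}\|,1[$ together with $b\in\;]\|(L(\alpha)|_{L(G)_{1,\obu}})^{-1}\|,1[$. Then $L(\alpha)$ is $a$-hyperbolic with $L(G)_{a,\obs}=L(G)_{1,\obs}$ (the inequalities of Definition~\ref{defahyp} hold because $L(\alpha)$ restricts to an isometry on $L(G)_{1,\obc}$ and to an expansion on $L(G)_{1,\obu}$), and analogously $L(\alpha^{-1})$ is $b$-hyperbolic with $b$-stable subspace $L(G)_{1,\obu}$. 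I apply Theorem~\ref{locstamfd}\,(c) with this $a$ to obtain a local $a$-stable manifold $V_1\subseteq G$ around $1$ with respect to $\alpha$, Theorem~\ref{cthm}\,(c) to obtain a centre manifold $V_0\subseteq G$ around $1$ with $\alpha(V_0)=V_0$, and Theorem~\ref{locstamfd}\,(c) applied to $\alpha^{-1}$ to obtain a local $b$-stable manifold $V_{-1}\subseteq G$ for $\alpha^{-1}$; each is a submanifold of $G$ tangent at $1$ to $L(G)_{1,\obs}$, $L(G)_{1,\obc}$, $L(G)_{1,\obu}$ respectively. The differential of $\pi$ at $(1,1,1)$ is the canonical isomorphism $L(G)_{1,\obs}\oplus L(G)_{1,\obc}\oplus L(G)_{1,\obu}\to L(G)$, so Theorem~\ref{IFT} makes $\pi$ an analytic diffeomorphism from an open neighbourhood of $(1,1,1)$ onto an open subset of $G$; shrinking the three submanifolds accordingly yields~(a).

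For part~(b), the same $a$ yields, via Theorem~\ref{ultrastabm}, an immersed submanifold structure on $W_a^\obs(\alpha,1)$ fulfilling (a)--(c) of that theorem (note $T_1(M)_{a,\obs}=L(G)_{1,\obs}$), together with the final assertion about the neighbourhood $\Omega$; Proposition~\ref{proprev}\,(a) promotes this to an immersed Lie subgroup, and the uniqueness of the manifold structure descends from the uniqueness part of Theorem~\ref{ultrastabm}. It remains to establish the set-theoretic identity $W^\obs(\alpha,1)=W_a^\obs(\alpha,1)$. The inclusion $W_a^\obs\subseteq W^\obs$ is immediate from Definition~\ref{defwas} since $a<1$. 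Note that $W^\obs(\alpha,1)$ is an $\alpha$-invariant subgroup of $G$: it is closed under multiplication since $\alpha^n(xy)=\alpha^n(x)\alpha^n(y)\to 1$, and under inversion since $\alpha^n(x^{-1})=\alpha^n(x)^{-1}\to 1$. Since $V_1\subseteq W_a^\obs\subseteq W^\obs$, it suffices to prove that each $x\in W^\obs(\alpha,1)$ sufficiently close to $1$ already lies in $V_1$; the general case follows by replacing $x$ with $\alpha^N(x)$ for large $N$.

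I would work in a chart $\kappa$ of $G$ around $1$ with $d\kappa(1)=\id_{L(G)}$ compatible with part~(a), set $g:=\kappa\circ\alpha\circ\kappa^{-1}$ and write $E_1,E_2,E_3$ for the summands $L(G)_{1,\obs},L(G)_{1,\obc},L(G)_{1,\obu}$ of $E:=L(G)$. Applying Proposition~\ref{thmloccs} twice, once with $(E_1\oplus E_2,E_3)$ and $a=1$ to realise the local centre-stable manifold $N_{cs}$ of $g$ as the graph of an analytic $\phi_{cs}\colon B^{E_1\oplus E_2}_r(0)\to E_3$ with $\phi_{cs}(0)=0$ and $\phi_{cs}'(0)=0$, and once (inside $N_{cs}$, using $(E_1,E_2)$ and the chosen $a$) to realise $V_1$ as the graph of an analytic $\phi_s\colon B^{E_1}_r(0)\to E_2$ with $\phi_s(0)=0$ and $\phi_s'(0)=0$, I introduce the two ``deviations''
\[
\delta_u(y):=y_3-\phi_{cs}(y_1+y_2),\qquad \delta_c(y):=y_2-\phi_s(y_1)
\]
for $y=y_1+y_2+y_3\in E$. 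The heart of the argument is the pair of ultrametric monotonicity estimates $\|\delta_u(g(y))\|\geq\|\delta_u(y)\|$ (valid on a small ball) and, on the centre-stable submanifold $\{\delta_u=0\}$, $\|\delta_c(g(y))\|\geq\|\delta_c(y)\|$. Iterating along the tail of the $g$-orbit of $\kappa(x)$, which eventually lies in any prescribed small ball, and using continuity of both deviations together with $g^n(\kappa(x))\to 0$, forces $\delta_u(\kappa(x))=0$ (so $\kappa(x)\in N_{cs}$, whence the whole forward orbit lies on $N_{cs}$ by its $g$-invariance) and then $\delta_c(\kappa(x))=0$, i.e., $\kappa(x)$ is on the graph of $\phi_s$ and $x\in V_1$.

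The principal obstacle is verifying these two monotonicity estimates. The affine part of $\delta_u\circ g$ equals $L(\alpha)|_{E_3}\circ\delta_u$, which strictly expands since $\|L(\alpha)|_{E_3}v\|\geq\|v\|/\|L(\alpha)|_{E_3}^{-1}\|>\|v\|$ by Definition~\ref{defacentre}\,(c); the remaining contributions (coming from $\wt g$ and from composing with $\phi_{cs}$) are controlled by the small Lipschitz constants of $\wt g$ (Remark~\ref{remstrict}\,(d)) and of $\phi_{cs}$ (whose derivative at $0$ vanishes), so the ultrametric domination principle~(\ref{domi}) forces the equality $\|\delta_u(g(y))\|=\|L(\alpha)|_{E_3}\delta_u(y)\|$ and hence the claimed inequality. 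The argument for $\delta_c$ is parallel and uses that $L(\alpha)$ acts \emph{isometrically} on $E_2$ by Definition~\ref{defacentre}\,(b); the same domination then gives even the equality $\|\delta_c(g(y))\|=\|\delta_c(y)\|$. This parallels the estimates behind Theorem~\ref{mainthm} and the local $a$-stable construction in Section~\ref{secirw}.
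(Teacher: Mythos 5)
Your proposal is correct, and its part~(a) is essentially the paper's own argument: an adapted norm, a choice of $a$ making $L(\alpha)$ $a$-hyperbolic with $a$-stable subspace the stable summand, Theorems~\ref{locstamfd}\,(c) and \ref{cthm}\,(c), and the inverse function theorem for $\pi$. Where you genuinely diverge is the key identity $W^\obs(\alpha,1)=W_a^\obs(\alpha,1)$ in part~(b). The paper exploits the automorphism property a second time: it uses the product chart $\kappa=(\kappa_1\times\kappa_0\times\kappa_{-1})\circ\pi^{-1}$ coming from~(a), in which $\alpha$ is locally conjugate to the direct product $g_1\times g_0\times g_{-1}$ of the three restricted dynamics, so that $\|g^n(x)\|=\max\{\|g_1^n(x_1)\|,\|g_0^n(x_0)\|,\|g_{-1}^n(x_{-1})\|\}$ with $g_0$ norm-preserving and $g_{-1}$ expanding immediately forces $x_0=x_{-1}=0$ for any point whose forward orbit stays in the chart and tends to $0$. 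You instead run a purely dynamical argument that does not use the group multiplication at this step: the two deviation functionals $\delta_u$ (distance to the centre-stable graph) and $\delta_c$ (distance, inside it, to the stable graph) are non-decreasing along orbits by the domination principle~(\ref{domi}), since their leading terms are governed by the expanding resp.\ isometric blocks of $L(\alpha)$ while the error terms are controlled by $\Lip(\wt{g})$ and $\Lip(\phi_{cs}),\Lip(\phi_s)\leq 1$; combined with $g^n(\kappa(x))\to 0$ this forces both deviations to vanish at a tail point of the orbit, and $\alpha$-invariance of $W_a^\obs$ finishes. This buys generality (it is in effect a characterization of the local stable set valid for any analytic map with a fixed point admitting these invariant graphs, not only for automorphisms), at the price of making the monotonicity estimates precise: one compares $y$ with its projection $(y_{12},\phi_{cs}(y_{12}))$ onto the graph and invokes the graph's local invariance from Proposition~\ref{thmloccs}, exactly in the spirit of the estimates behind Theorem~\ref{mainthm}, and one must keep the whole forward orbit in the ball where these bounds hold — your passage to a tail point handles this, whereas ``sufficiently close to $1$'' alone would not. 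Your shortcut for uniqueness, quoting the uniqueness clause of Theorem~\ref{ultrastabm} once the set identity and $E_{1,\obs}=E_{a,\obs}$ are in place, is legitimate; the paper argues this slightly more by hand (via the germ argument of Theorem~\ref{locstamfd}\,(b)) but to the same effect.
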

\begin{proof}
(a) Set $E:=L(G)$ and let $E=E_1\oplus E_0\oplus E_{-1}$
be the decomposition into a
stable subspace $E_1$, 
centre subspace $E_0$ and unstable subspace $E_{-1}$
with respect to $L(\alpha)$,
and $\|.\|$ be an ultrametric norm as
in Definition \ref{defacentre}.
There is $a\in \;]0,1[$ such that
$\|L(\alpha)|_{E_1}\|<a$
and $\frac{1}{\|L(\alpha)^{-1}|_{E_{-1}}\|}>\frac{1}{a}$.
Then $L(\alpha)$ is $a$-hyperbolic
with $a$-stable subspace $E_1$
and $a$-unstable subspace
$E_0\oplus E_{-1}$
(and the norm $\|.\|$ as before).
Also $L(\alpha)^{-1}$ is $a$-hyperbolic,
with $a$-stable subspace $E_{-1}$
and $a$-unstable subspace
$E_0\oplus E_1$
(and the norm $\|.\|$ as before).
We let
$V_1$ be a local $a$-stable
manifold around~$1$ with respect to~$\alpha$
and $V_{-1}$ be a local $a$-stable
manifold around~$1$ with respect to~$\alpha^{-1}$
(see Theorem~\ref{locstamfd}\,(a));
by Theorem \ref{locstamfd}\,(c), we may assume that
$V_1\sub W_a^\obs(\alpha,1)$.
Also, we let $V_0$ be a centre manifold
around $p$ with respect to~$\alpha$ (see Theorem \ref{cthm}\,(a)).
Then $T_1(V_1)=E_1$, $T_1(V_0)=E_0$ and $T_1(V_{-1})=E_{-1}$,
whence
\[
L(G)\, =\, T_1(V_1)\oplus T_1(V_0) \oplus T_1(V_{-1})\, .
\]
Thus, after shrinking $V_1$, $V_0$ and $V_{-1}$
(which is possible by Theorem \ref{locstamfd}\,(c)
and \ref{cthm}\,(c)),
we may assume that $P:=V_1V_0V_{-1}$ is open in $G$
and the product map (\ref{thepromp})
is an analytic diffeomorphism
(by the Inverse Function Theorem).\vspace{1mm}

(b) Shrinking $V_1$, $V_0$ and $V_{-1}$ further
is necessary,
we may assume that there are $r>0$
and charts $\kappa_j\colon V_j\to B^{E_j}_r(0)$
with $\kappa_j(1)=0$ and $d\kappa_j=\id$
for $j\in \{-1,0,1\}$.
There is $s\in \;]0,r]$ such that $\alpha(\kappa^{-1}_j(B^{E_j}_s(0)))\sub
V_j$ for all $j\in \{-1,0,1\}$.
Let $g_j:=\kappa_j\circ \alpha \circ \kappa_j^{-1}|_{B^{E_j}_s(0)}$.
Shrinking~$s$, we achieve that
\begin{eqnarray}
\|g_0(x)\|\, &=& \,\;\|x\|\hspace*{1.08mm} \quad
\; \mbox{for each $x\in B^{E_0}_s(0)$,}\label{useelswh2}\\
\|g_1(x)\|\,  & < & \; a \|x\| \quad
\, \hspace*{.1mm}\mbox{for each $x\in B^{E_1}_s(0)$, and } \label{useelswh3}\\
\|g_{-1}(x)\| & > & a^{-1}  \|x\| \;\; \mbox{for each $x\in B^{E_{-1}}_s(0)$} \label{useelswh4}
\end{eqnarray}
(using (\ref{domi})
and parts (b) and (d) of Remark~\ref{remstrict}).
Then
\[
\kappa:=(\kappa_1\times \kappa_0\times \kappa_{-1})\circ \pi^{-1}\colon
P\to B^E_r(0)
\]
is a chart of~$G$
around~$1$. We set
$g:=g_1\times g_0 \times g_{-1} \colon B^E_s(0)\to
B^E_r(0)$
(where $B^E_s(0)=B^{E_1}_s(0)\times B^{E_0}_s(0) \times B^{E_{-1}}_s(0)$).
Abbreviate $Q:=\kappa^{-1}(B^E_s(0))$.
Then
%
\begin{equation}\label{locconj}
f|_Q=\kappa^{-1}\circ g \circ\kappa|_Q\,.
\end{equation}
If $z\in W^\obs (\alpha,1)$, there is $n_0\in \N_0$
such that $\alpha^n(z)\in Q$ for all $n\geq n_0$,
and
\begin{equation}\label{wllwspr}
\|\kappa(\alpha^n(z))\|\to 0\quad\mbox{as $\,n\to\infty$.}
\end{equation}
After replacing $z$ with $f^{n_0}(z)$,
we may assume that $n_0=0$.
Now $x=(x_1, x_0, x_{-1}):=\kappa(z)$
is an element of $B_s^E(0)$
such that $g^n(x)\in B^E_s(0)$
for all $n\in \N_0$
(cf.\ (\ref{locconj})).
Also
%
\begin{equation}\label{zwstp}
\lim_{n\to\infty}\|g^n(x)\|=0\,,
\end{equation}
by (\ref{wllwspr}).
Since
$\|g^n(x)\|=\max\{\|g_1(x_1)\|, \| g_0(x_0)\|,\|g_{-1}(x_{-1})\|\}$
for all $n\in \N_0$,
using (\ref{useelswh2}) and (\ref{useelswh4})
we obtain a contradiction to~(\ref{zwstp})
unless $x_0=0$ and $x_{-1}=0$.
Thus $x=x_1\in E_1$
and thus $z=\kappa_1^{-1}(x_1)\in V_1\sub W_a^\obs(\alpha,1)$,\linebreak
entailing that $W^\obs(\alpha,1)\sub W_a^\obs(\alpha,1)$.
The converse inclusion being trivial,
we deduce that
$W^\obs(\alpha,1)
=W_a^\obs(\alpha,1)$.
We give $W^\obs(\alpha,1)$
the manifold structure of
$W_a^\obs(\alpha,1)$.
It then is tangent to $E_{a,\obs}=E_1$ at~$1$.
Hence $W^\obs(\alpha,1)$
satisfies conditions (a)--(c)
of Theorem~\ref{ultrastabm},
and also the final assertion of the theorem.
To obtain the uniqueness of the immersed submanifold
structure subject to these conditions,
note that for any such structure on $W^\obs$,
each neighbourhood of $p$ in $W^\obs$ contains
am open $f$-invariant $p$-neighbourhood
(as this only requires (\ref{domi})
and Remark \ref{remstrict}\,(d)).
Now one shows as in the proof of Theorem \ref{locstamfd}\,(b)
that the germ of the latter
coincides with the germ we\linebreak
already have,
and this entails as in the proof of the uniqueness
part of\linebreak
Theorem \ref{ultrastabm}
that the new manifold structure on $W^\obs$
coincides with the one we already had
(further explanations are omitted,
because the assertion is not central).
All other assertions follow from
Proposition~\ref{proprev}.
\end{proof}
\begin{cor}
If $G$ is a finite-dimensional
Lie group, then there is a unique
immersed submanifold structure on
$W^\obs(\alpha,1)$
such that
conditions {\rm (a)--(c)}
of Theorem~{\rm\ref{ultrastabm}}
are satisfied.
This immersed submanifold structure
makes $W^\obs(\alpha,1)$
an immersed Lie subgroup
of~$G$.
Moreover,
$W^\obs(\alpha,1)=W_a^\obs(\alpha,1)$
for each $a\in \;]0,1[$ such that
$[a,1[\,\cap R(L(\alpha))=\emptyset$
and $\;]1,\frac{1}{a}]\cap R(L(\alpha))=\emptyset$.
\end{cor}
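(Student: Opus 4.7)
The plan is to reduce to Propositions~\ref{notmostgen} and~\ref{adepprop}, noting that the finite-dimensionality of $G$ automatically supplies the hypothesis of the former. Since $\alpha$ is an automorphism of $G$, the differential $L(\alpha)$ is an automorphism of the finite-dimensional $\K$-vector space $L(G)$, so Corollary~\ref{spechyper}(b) furnishes a centre subspace of $L(G)$ with respect to $L(\alpha)$. Thus Proposition~\ref{notmostgen}(b) applies and delivers at once the unique immersed submanifold structure on $W^\obs(\alpha,1)$ satisfying conditions (a)--(c) of Theorem~\ref{ultrastabm}, the immersed Lie subgroup property, and the existence of some $a_0\in \;]0,1[$ with $L(\alpha)$ being $a_0$-hyperbolic and $W^\obs(\alpha,1)=W_{a_0}^\obs(\alpha,1)$. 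This handles every claim of the corollary except the identification $W^\obs = W_a^\obs$ for every $a$ subject to the stated spectral conditions.

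Next I would verify that the particular $a_0$ produced above already satisfies the two spectral conditions, and then use Proposition~\ref{adepprop}(b) to propagate the equality to arbitrary such $a$. Inspecting the proof of Proposition~\ref{notmostgen}(a), $a_0$ is chosen with $\|L(\alpha)|_{E_1}\|<a_0$ and $\|L(\alpha)^{-1}|_{E_{-1}}\|<a_0$, where $E_1$ and $E_{-1}$ are the stable and unstable subspaces with respect to $L(\alpha)$, in a norm adapted to $L(\alpha)$ in the sense of Definition~\ref{defnadpt} (supplied by Proposition~\ref{propadapt}). In such an adapted norm one has $\|L(\alpha)|_{E_\rho}\|=\rho$ for each $\rho\in R(L(\alpha))\setminus\{0\}$, so these two inequalities translate into $\{\rho\in R(L(\alpha))\colon \rho<1\}\subseteq \;]0,a_0[$ and $\{\rho\in R(L(\alpha))\colon \rho>1\}\subseteq \;]1/a_0,\infty[$, equivalently $[a_0,1[\,\cap R(L(\alpha))=\emptyset$ and $\;]1,1/a_0]\cap R(L(\alpha))=\emptyset$.

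Finally, let $a\in \;]0,1[$ be any value satisfying the two spectral conditions of the corollary; I claim $W_a^\obs(\alpha,1)=W_{a_0}^\obs(\alpha,1)$. If $a=a_0$ this is trivial, so assume $a\ne a_0$ and set $b:=\min(a,a_0)$, $c:=\max(a,a_0)$, so $0<b<c<1$. Since $b$ equals one of $a$, $a_0$ and both of these satisfy the first stated spectral condition, so does $b$, i.e.\ $[b,1[\,\cap R(L(\alpha))=\emptyset$, and in particular $[b,c]\cap R(L(\alpha))=\emptyset$. Proposition~\ref{adepprop}(b) therefore yields $W_b^\obs(\alpha,1)=W_c^\obs(\alpha,1)$, whence $W_a^\obs(\alpha,1)=W_{a_0}^\obs(\alpha,1)=W^\obs(\alpha,1)$. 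The only genuinely delicate point is unpacking the spectral inequalities implicit in the proof of Proposition~\ref{notmostgen}; everything else is routine chaining of earlier results.
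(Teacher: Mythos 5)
Your proposal is correct, and it shares the paper's key computation: choosing the norm in the proof of Proposition~\ref{notmostgen} to be adapted to $L(\alpha)$ (Proposition~\ref{propadapt}), so that $E_1$, $E_0$, $E_{-1}$ become $\bigoplus_{\rho<1}L(G)_\rho$, $L(G)_1$, $\bigoplus_{\rho>1}L(G)_\rho$ and the operator-norm conditions $\|L(\alpha)|_{E_1}\|<a$, $\|L(\alpha)^{-1}|_{E_{-1}}\|<a$ become exactly the spectral conditions $[a,1[\,\cap R(L(\alpha))=\emptyset$ and $]1,\frac1a]\cap R(L(\alpha))=\emptyset$. Where you diverge is in how the ``for each $a$'' clause is handled: the paper simply observes that, with the adapted norm, the proof of Proposition~\ref{notmostgen} can be re-run with the \emph{given} $a$ in place of the $a$ chosen there, so all assertions (uniqueness, immersed Lie subgroup, $W^\obs=W_a^\obs$) come out in one stroke; you instead apply Proposition~\ref{notmostgen}\,(b) once to get a single $a_0$, verify (as above) that this $a_0$ satisfies the spectral conditions, and then bridge to an arbitrary admissible $a$ via the $a$-dependence result, Proposition~\ref{adepprop}\,(b), noting $[\min\{a,a_0\},\max\{a,a_0\}]\cap R(L(\alpha))=\emptyset$. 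Both routes are sound. The paper's is slightly more economical (no appeal to Section~12), while yours keeps Proposition~\ref{notmostgen} closer to a black box -- though, as you note, you still must peek into its proof to know the norm can be taken adapted and hence that $a_0$ meets the spectral conditions; without that step your case $a_0<a$ would not be covered, since $a$-admissibility alone does not exclude points of $R(L(\alpha))$ in $]a_0,a[$. One further small point you implicitly use and could have made explicit: each admissible $a$ lies in $[a,1[\,$, hence $a\notin R(L(\alpha))$, so $L(\alpha)$ is $a$-hyperbolic (Corollary~\ref{spechyper}\,(c)) and $W_a^\obs(\alpha,1)$ indeed carries the structure from Theorem~\ref{ultrastabm}; and Proposition~\ref{adepprop}\,(b) gives the equality as immersed submanifolds, not merely as sets, which is what the corollary asserts.
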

\begin{proof}
If we choose $\|.\|$ as a norm adapted to $L(\alpha)$ (as in Definition \ref{defnadpt})
in the proof of Proposition \ref{notmostgen},
then $E_1$, $E_0$ and $E_{-1}$
are the direct sum of all $L(G)_\rho$
with $\rho\in R(L(\alpha))$,
such that $\rho\in \;]0,1[$ (resp., $\rho=1$,
resp., $\rho\in \;]1,\infty[$), by (\ref{sodecac}).
If $a$ is as described in the current corollary, then $\|L(\alpha)\|<a$ and $\|L(\alpha)^{-1}\|<a$
(as is clear from (b) and (c) in Definition \ref{defnadpt}).
Therefore the proof of Proposition \ref{notmostgen}
applies with this choice of $a$.
\end{proof}
\appendix
\section{Proof of Proposition~\ref{thmloccsbb}}\label{sec1app}
For all integers $k\geq 2$, we choose
$\alpha_k\in \cL^k(E,E_1)$,
$\beta_k\in \cL^k(E,E_2)$
and $\gamma_k\in \cL^k(E,E_3)$
such that $a_k=\alpha_k\circ\Delta^E_k$,
$b_k=\beta_k\circ\Delta^E_k$,
$c_k=\gamma_k\circ\Delta^E_k$
and $\|\alpha_k\|$, $\|\beta_k\|$, $\|\gamma_k\| <1$.\\[2.5mm]
If $\phi$ is an analytic function of the form described in (c),
then (b) holds and
\begin{equation}\label{firstsr}
\sup\{\|(d_k,e_k) \| \,s^k\colon k\geq 2\}\; < \; s
\end{equation}
for each $s\in\;]0,r]$,
entailing that $\phi(B^{E_2}_s(0)) \sub B^{E_1\times E_3}_s(0)$.
Now $f(\Gamma_s)= \Gamma_s$ for all $s\in\;]0,r]$
if we can prove that $f(\Gamma_r)\sub \Gamma_r$,
exploiting that
\[
B_s^{E_2}(0)\to E_2,\quad
y\mto
f_2(\phi_1(y),y,\phi_3(y))
= By+\wt{f}_2(\phi_1(y),y,\phi_3(y))
\]
is an isometry with image $B^{E_2}_s(0)$
(by Remark \ref {behaveba}).
In particular,
\begin{equation}\label{neweqn}
\|f_2(\phi_1(y),y,\phi_3(y))\|
= \|y\| < s
\end{equation}
for all $s\in \;]0,r]$
and $y\in B_s^{E_2}(0)$.
Applying (\ref{firstsr}) with $s=r$, we see that
$f(\phi_1(y),y,\phi_3(y))$ is defined
for all $y\in B^{E_2}_r(0)$
and given globally by its Taylor series around $0$
(by Lemma~\ref{quanticomp}).
Now let $y\in B^{E_2}_r(0)$.
In view of (\ref{neweqn}) (applied with $s=r$),
we have
$f(\phi_1(y), y,\phi_3(y))\in \Gamma_r$
if and only if
\begin{eqnarray*}
\lefteqn{f(\phi_1(y),y,\phi_3(y))}\\
& = & \big(\phi_1(f_2(\phi_1(y),y,\phi_3(y))),\,
f_2(\phi_1(y),y,\phi_3(y)),\,
\phi_3(f_2(\phi_1(y),y,\phi_3(y))) \big),
\end{eqnarray*}
which holds if and only if
%
\begin{eqnarray}
f_1(\phi_1(y),y,\phi_3(y)) & = & \phi_1(f_2(\phi_1(y),y,\phi_3(y)))
\quad\mbox{and} \label{fstimpldd}\\
f_3(\phi_1(y),y,\phi_3(y))
& = & \phi_3(f_2(\phi_1(y),y,\phi_3(y))). \label{fstimplbb}
\end{eqnarray}
We mention that the right hand sides of (\ref{fstimpldd})
and (\ref{fstimplbb})
are given on all of $B_r^{E_2}(0)$ by their Taylor series around $0$,
since the homogeneous polynomial~$\zeta_j$ of degree~$j$
of the Taylor series of $f_2\circ (\phi_1,\id,\phi_3)$ around $0$
vanishes if $j=0$ and
has norm $\|\zeta_j\|\leq 1 $ if $j\geq 1$
(as will be verified in (\ref{neweqB}) and (\ref{verifetadd})),
whence $\|\zeta_j\| r^j\leq r$
and so Lemma~\ref{quanticomp} applies.
Therefore the validity of~(\ref{fstimpldd}) and (\ref{fstimplbb}) for all $y \in B^{E_2}_r(0)$
is equivalent to their validity on $B^{E_2}_t(0)$ for some $t\in\;]0,r]$
(as in (a)),
and is also equivalent to two identities
for formal series, namely
%
\begin{eqnarray}
\lefteqn{A(d_2(y)+d_3(y)+\cdots)}\notag\\
& + \!\! & \! a_2(d_2(y)+\cdots,y, e_2(y)+\cdots)
\; +\;  a_3(d_2(y)+\cdots,y, e_2(y)+\cdots)+\cdots \notag \\
&= \!\! & \!
d_2\big(By+b_2(d_2(y)+\cdots,y, e_2(y)+\cdots)
+b_3(\cdots) +\cdots\big)\notag\\
& + \!\! &\!  
d_3\big(By+b_2(d_2(y)+\cdots,y, e_2(y)+\cdots)
+b_3(\cdots)+\cdots\big)\; + \,\cdots \label{hugebb}
\end{eqnarray}
and
\begin{eqnarray}
\lefteqn{C(e_2(y)+e_3(y)+\cdots)}\notag\\
& \!\!& \! +\; c_2(d_2(y)+\cdots,y, e_2(y)+\cdots)
\; +\; c_3(d_2(y)+\cdots,y, e_2(y)+\cdots)+\cdots \notag \\
&=\!\! &\!
e_2\big(By+b_2(d_2(y)+\cdots,y, e_2(y)+\cdots)
+b_3(\cdots) +\cdots\big)\notag\\
& + \!\!&\! 
e_3\big(By+b_2(d_2(y)+\cdots,y, e_2(y)+\cdots)
+b_3(\cdots)+\cdots\big)\; + \, \cdots \, .\label{hugecc}
\end{eqnarray}
Equality of the second order terms on both sides
of (\ref{hugebb}) means that
%
\begin{equation}\label{wantunq}
A d_2(y) + a_2(0,y,0)=d_2(By)\,,
\end{equation}
which can be rewritten as
$(B^*-A_*) (d_2)=a_2(0,\sbull,0)$
with
$A_*:=\Pol^2(E_2,A)$ and
$B^*:=\Pol^2(B, E_1)$.
Since $B^*-A_*=B^*(\id-(B^{-1})^*A_*)$
with $\|(B^{-1})^*A_*\|$ $\leq \|B^{-1}\|^2 \|A\|< 1$,
we see that
%
\begin{equation}\label{wahlpbb}
d_2=(B^*-A_*)^{-1} a_2(0,\sbull,0)
= (\id-(B^{-1})^* A_*)^{-1} (B^{-1})^* a_2(0,\sbull,0)
\end{equation}
is the unique solution to (\ref{wantunq}),
and $\|d_2\|\leq \|B^{-1}\|^2 \|a_2(0,\sbull,0)\| < 1$.\\[2.5mm]
Equality of the second order terms in (\ref{hugecc})
means that
%
\begin{equation}\label{wantunqcc}
C e_2(y) + c_2(0,y,0)=e_2(By)\,,
\end{equation}
which can be rewritten as
$(C_*-B^*) (e_2)=- c_2(0,\sbull,0)$
with
$C_*:=\Pol^2(E_2,C)$ and
$B^*:=\Pol^2(B, E_3)$.
Since $C_*-B^*=C_*(\id-(C^{-1})_*B^*)$
with $\|(C^{-1})_*B^*\|$ $\leq \|C^{-1}\|\cdot\|B\|^2< 1$,
we see that
%
\begin{equation}\label{wahlpcc}
e_2 = - (C_*-B^*)^{-1} c_2(0,\sbull,0)
= (\id-(C^{-1})_* B^*)^{-1} (C^{-1})_* \, c_2(0,\sbull,0)
\end{equation}
is the unique solution to (\ref{wantunqcc}),
and $\|e_2\|\leq \|C^{-1}\|\cdot\|c_2(0,\sbull,0)\|
< 1$.\\[2.5mm]
Let $n\geq 3$ now and, by induction, suppose we have found
$d_k\in\Pol^k(E_2,E_1)$
and $e_k\in\Pol^k(E_2,E_3)$
with $\|d_k\|, \| e_k\| < 1$
for $k=2,\ldots, n-1$,
such that (\ref{hugebb})
and (\ref{hugecc})
hold up to order $n-1$
if these $d_2,\ldots, d_{n-1}$ and $e_2,\ldots, e_{n-1}$
are used.
For $k=2,\ldots, n-1$, let
$\delta_k\in \cL^k(E_2,E_1)$
and
$\eta_k\in \cL^k(E_2,E_3)$
such that
$d_k=\delta_k\circ\Delta^{E_2}_k$,
$e_k=\eta_k\circ\Delta^{E_2}_k$,
and $\|\delta_k\|,\|\eta_k\| < 1$.
Define $\xi_1(y):=(0,y,0)\in E$ for $y\in E_2$
and $\xi_k:=(\delta_k,0,\eta_k)\in \Pol^k(E_2,E)$
for $k\in\{2,\ldots, n-1\}$.\\[2.5mm]
Let  $B^*:=\Pol^n(B,E_1)$ and
$A_*:=\Pol^n(E_2,A)$.
Equality of the $n$-th order terms on both
sides of (\ref{hugebb}) amounts to
%
\begin{equation}\label{lesshugebb}
A_*(d_n) + r_n= B^* (d_n)+ s_n
\end{equation}
with
%
\begin{eqnarray}
r_n & = &
\sum_{k=2}^n \sum_{\stackrel{\scriptstyle j_1,\ldots, j_k\in \N}{j_1+\cdots+ j_k=n}}
\alpha_k\circ(\xi_{j_1},\ldots, \xi_{j_k})\, \quad\mbox{and} \label{fsterrbb}\\
s_n & = & \sum_{k=2}^{n-1} 
\sum_{\stackrel{\scriptstyle j_1,\ldots, j_k\in \N}{j_1+\cdots+ j_k=n}}
\delta_k\circ (\zeta_{j_1},\ldots, \zeta_{j_k}), \quad\mbox{where}\label{secerrbb}\\
\zeta_j & = &
\sum_{\ell=2}^j \sum_{\stackrel{\scriptstyle i_1,\ldots, i_\ell \in \N}{i_1+\cdots+ i_\ell=j}}
\beta_\ell\circ (\xi_{i_1},\ldots, \xi_{i_\ell})
\quad\mbox{for $j=2,\ldots, n-1$}\label{simfmlbb}
\end{eqnarray}
and $\zeta_1:=B$, where
\begin{equation}\label{neweqB}
\|\zeta_1\|\;=\; \|B\| \; \leq \; 1\, .
\end{equation}
As in the proof of Proposition~\ref{thmloccs},
we see that
%
\begin{equation}\label{fibasbb}
\| r_n\|< 1\,.
\end{equation}
Likewise, the norm of each summand in (\ref{simfmlbb})
is $<1$, and thus
\begin{equation}\label{verifetadd}
\|\zeta_j\|\; < \; 1\, .
\end{equation}
As a consequence, the norm of each summand in (\ref{secerrbb})
is $\leq \|\delta_k\|\cdot 1
< 1$.
Therefore,
%
\begin{equation}\label{secbasbb}
\|s_n\|< 1\,.
\end{equation}
In view of Lemma \ref{pbpfw},
(\ref{nonexpa}), (\ref{fibasbb}) and (\ref{secbasbb}),
\[
d_n:=(B^*-A_*)^{-1}(r_n-s_n)=(\id-(B^{-1})^*A_*)^{-1}(B^{-1})^*(r_n-s_n)
\in \Pol^n(E_2,E_1)
\]
is the unique solution to~(\ref{lesshugebb}),
of norm $\|d_n\|< 1$.\\[2.5mm]
Define  $B^*:=\Pol^n(B,E_3)$ and
$C_*:=\Pol^n(E_2,C)$.
Equality of the $n$-th order terms on both sides
of (\ref{hugecc}) amounts to
%
\begin{eqnarray}
C_*(e_n) + \rho_n & = & B^* (e_n)+ \sigma_n\quad\mbox{with}\label{lesshugecc}\\
\rho_n & = &
\sum_{k=2}^n \sum_{\stackrel{\scriptstyle j_1,\ldots, j_k\in \N}{j_1+\cdots+ j_k=n}}
\gamma_k\circ(\xi_{j_1},\ldots, \xi_{j_k})\, \quad\mbox{and} \notag\\
\sigma_n & = & \sum_{k=2}^{n-1} 
\sum_{\stackrel{\scriptstyle j_1,\ldots, j_k\in \N}{j_1+\cdots+ j_k=n}}
\eta_k\circ (\zeta_{j_1},\ldots, \zeta_{j_k})\notag
\end{eqnarray}
(with $\zeta_j$ from above).
As before, we see that $\|\rho_n\|, \|\sigma_n\|< 1$
and that
\[
e_n:=(C_*-B^*)^{-1}(\sigma_n-\rho_n)=(\id-(C^{-1})_*B^*)^{-1}(C^{-1})_*(\sigma_n-\rho_n)
\in \Pol^n(E_2,E_3)
\]
is the unique solution to~(\ref{lesshugecc}),
of norm $\|e_n\|< 1$.
This completes the proof of Proposition~\ref{thmloccsbb}.
\section{Proof of Theorem~\ref{locumfd}}\label{sec2app}
As usual, we first
discuss the situation in a local chart.
%
%
\begin{numba}\label{thesitu5}
We retain the setting
(and notation)
from \ref{thesitu},
except that we now assume that
$a\in [1,\infty[$
(instead of $a\in \;]0,1]$).
\end{numba}
%
%
%
%
\begin{thm}\label{mainthm5}
In the situation of {\rm\ref{thesitu5}}, consider the set
$\Gamma$ of all $z_0\in B_r^E(0)$
for which there exists a sequence $(z_n)_{n\in \N_0}$
in $E$ such that
\begin{equation}\label{thebiggam5}
\mbox{$a^n\|z_n\|< r\,$ and $\,f(z_{n+1})=z_n\,$
for all $\,n\in \N_0$, \,and}\;\,
\lim_{n\to\infty} a^n\|z_n\| = 0.
\end{equation}
Then $\Gamma$ has properties {\rm(a)} and {\rm(b):}
\begin{itemize}
\item[\rm (a)]
$\Gamma$ is locally $f$-invariant, more precisely
$f\bigl(\Gamma\cap B^E_{r/c}(0)\bigr)\sub \Gamma$
with $c:=\max\{1,\Lip(f)\}$.
\item[\rm (b)]
$\Gamma=\{(\phi(y),y)\colon y\in B_r^{E_2}(0)\}$
for an analytic
map $\phi\colon B_r^{E_2}(0)\to B_r^{E_1}(0)$
with $\phi(0)=0$
and $\phi'(0)=0$
$($whence $T_0 \Gamma=E_2)$.
\end{itemize}
Moreover, the following holds:
\begin{itemize}
\item[\rm(c)]
$\phi$ is Lipschitz, with $\Lip(\phi)\leq 1$.
\item[\rm (d)]
For each $s \in \;]0,r]$,
the set $\Gamma\cap B_s^E(0)$
has properties analogous to those of $\Gamma$
described in {\rm(\ref{thebiggam5})} and {\rm(b)} if we replace
$r$ with~$s$ there.
\end{itemize}
\end{thm}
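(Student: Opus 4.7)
My plan is to mirror the adaptation of Irwin's method carried out in Theorem~\ref{mainthm}, but reversing the direction of time. Since we now want sequences $(z_n)_{n\in\N_0}$ with $f(z_{n+1})=z_n$ and $a^n\|z_n\|\to 0$ where $a\geq 1$, the natural ambient space is $\cS_b(E)$ with $b:=\frac{1}{a}\in\;]0,1]$, which carries the norm $\|z\|_b=\max_n a^n\|z_n\|$. I would identify $\cS_b(E)$ with $\cS_b(E_1)\oplus\cS_b(E_2)$ via Lemma~\ref{basicsseq}(e), set $\cU:=B_r^{\cS_b(E)}(0)$ (open by Lemma~\ref{basicsseq}(d)), and note that by construction $\Gamma$ is precisely the set of first entries of those $z\in \cU$ satisfying $f(z_{n+1})=z_n$ for every $n$.

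Next I would encode the backward-orbit relation as the fixed-point condition for a contractive map. Regarding $y:=\pr_2(z_0)\in E_2$ as the free parameter, I define $g\colon\cU\to\cS_b(E)$ by
\[
g(z)_n=\begin{cases}\bigl(f_1(z_1),\,0\bigr)&\mbox{if $\,n=0$,}\\
\bigl(f_1(z_{n+1}),\,B^{-1}(y_{n-1}-\wt{f}_2(z_n))\bigr)&\mbox{if $\,n\geq 1$,}\end{cases}
\]
writing $z_n=(x_n,y_n)$. A direct check shows that $G(z):=z-g(z)=((0,y),0,0,\ldots)$ holds if and only if $y_0=y$ and $z_n=f(z_{n+1})$ for every $n\geq 0$. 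Rewriting $g$ in terms of the shifts on $\cS_b$ and the sequence-space liftings of $f_1$ and $\wt{f}_2$ provided by Proposition~\ref{propseqana} (applicable because $b\leq 1$), I would obtain that $g$ is analytic and $\Lip(g)<1$: the linear contributions are bounded by $b\|A\|<1$ and $a\|B^{-1}\|<1$, while the nonlinear pieces have Lipschitz constants $\leq b\Lip(\wt{f}_1)<1$ and $\leq\|B^{-1}\|\Lip(\wt{f}_2)<1$, using (\ref{goodA2}), (\ref{goodC2}), (\ref{ftlip2}) and the shift estimates from Lemma~\ref{basicsseq}(a).

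Then $G:=\id_\cU-g$ is an analytic diffeomorphism of $\cU$ onto itself and an isometry for $\|.\|_b$, by the Ultrametric Inverse Function Theorem~\ref{IFT} and the domination principle~(\ref{domi}). Setting $w(y):=G^{-1}((0,y),0,0,\ldots)$ for $y\in B_r^{E_2}(0)$ yields an analytic map, and reading $G(w(y))=((0,y),0,0,\ldots)$ through the formula for $g$ gives $w(y)_0=(\phi(y),y)$ for an analytic $\phi\colon B_r^{E_2}(0)\to B_r^{E_1}(0)$ whose graph lies in $\Gamma$, because $w(y)\in\cU$ is a backward orbit. The reverse inclusion follows since any backward orbit $(z_n)\in\cU$ of some $z_0\in\Gamma$ satisfies $G((z_n))=((0,\pr_2 z_0),0,\ldots)$, forcing $(z_n)=w(\pr_2 z_0)$ and $z_0=(\phi(\pr_2 z_0),\pr_2 z_0)$. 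The normalizations $\phi(0)=0$, $\phi'(0)=0$ fall out of $w(0)=0$ together with the Neumann-series formula $(G^{-1})'(0)=(\id-g'(0))^{-1}$, and (c) is immediate from the isometry property of $G^{-1}$: $\|\phi(y)-\phi(y')\|\leq\|w(y)_0-w(y')_0\|\leq\|w(y)-w(y')\|_b=\|y-y'\|$.

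Assertion (d) follows by running the same construction with $r$ replaced by any $s\in\;]0,r]$, since the bounds in (\ref{ftlip2}) persist on smaller balls. The main technical subtlety I anticipate lies in~(a): given $z_0\in\Gamma\cap B_{r/c}^E(0)$ with backward orbit $(z_n)\in\cU$, the candidate backward orbit of $f(z_0)$ is $(f(z_0),z_0,z_1,\ldots)$, and the crude bound $a^n\|z_{n-1}\|<ar$ only places this shifted sequence in $\cS_b(B_{ar}^E(0))$, not in $\cS_b(B_r^E(0))$. To close the gap I would exploit that $\Gamma$ is the $1$-Lipschitz graph of $\phi$, so $\|z_n\|=\|y_n\|$, and apply the domination principle to $y_{n-1}=By_n+\wt{f}_2(z_n)$: since $\|By_n\|>a\|y_n\|$ dominates $\|\wt{f}_2(z_n)\|<a\|y_n\|$, this forces $\|y_n\|<\|y_{n-1}\|/a$, and hence the sharper estimate $a^n\|z_n\|\leq\|z_0\|$. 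Combined with $\|z_0\|<r/c\leq r/a$ (using $c\geq\Lip(f)\geq\|B\|>a$), this yields $a\|z_0\|<r$, placing the shifted sequence in $\cU$ and proving $f(z_0)\in\Gamma$.
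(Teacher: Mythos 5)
Your proposal is correct and follows essentially the same route as the paper's proof in Appendix B: the same sequence space $\cS_{1/a}(E)$, the same map $g$, the isometric analytic diffeomorphism $G=\id_\cU-g$ obtained from the Ultrametric Inverse Function Theorem and the domination principle, and the same graph/uniqueness argument giving (b), (c) and (d). The only divergence is in (a), where the paper simply invokes (d) at radius $s=r/c$ and applies $f$ termwise to a backward orbit with the sharper bound (so that $a^n\|f(z_n)\|\leq\Lip(f)\,a^n\|z_n\|<c\cdot r/c=r$), whereas you derive the decay estimate $a^n\|z_n\|\leq\|z_0\|$ by a domination argument on the $E_2$-component; this is a valid (slightly more laborious) alternative, so the proof stands.
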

\begin{proof}
(b) Let $b:=\frac{1}{a}$.
We abbreviate $\cU:=B^{\cS_b(E)}_r(0)$
and consider the map $g\colon \cU\to\cS_b(E)$
taking $z=(z_n)_{n\in \N_0}\in \cU$ with $z_n=(x_n ,y_n)$
to the sequence $g(z)$ with $n$-th entry
%
\begin{equation}\label{dfnsmg5}
g(z)_n:=
\left\{
\begin{array}{cl}
\bigl( f_1(z_1), \, 0 \bigr) &\mbox{if $\, n=0$;}\\
\bigl(f_1(z_{n+1}),\, B^{-1}(y_{n-1}-\wt{f}_2(z_n))\bigr) & \mbox{if $\, n\geq 1$}
\end{array}
\right.
\end{equation}
for $n\in \N_0$.
Then $g(0)=0$.
Using the left shift $\lambda$ on $\cS_b(E_1)$,
the left shift $\Lambda$ on $\cS_b(E_2)$,
the right shift~$\rho$ on $\cS_b(E_2)$ and
the projection $\pr_2\colon E=E_1\oplus E_2\to E_2$,
we can write~$g$ in the form
\[
g\, =\,  \big( \lambda \circ \cS_b(f_1),\, \rho\circ \cS_b(B^{-1})\circ (\cS_b(\pr_2)- \Lambda
\circ \cS_b(\wt{f}_2))\big)\,.
\]
In view of Lemma \ref{basicsseq} (a) and~(e), Lemma \ref{lamapsop}\,(a)
and Proposition \ref{propseqana}, this formula
shows that $g$ is analytic and Lipschitz with $\Lip(g)<1$.
Now
\[
G\, :=\, \id_\cU-g\colon \cU\to\cU
\]
is an analytic diffeomorphism and an isometry,
by the Ultrametric Inverse Function Theorem (Theorem \ref{IFT})
and the domination principle~(\ref{domi}).
\mbox{The map}
\[
w\colon B_r^{E_2} (0)\to\cU\,,\quad
w(y):= G^{-1}\big( (0,y),(0,0),\ldots\big)
\]
is analytic and
$(\id_\cU-g)(w(y))=G(w(y))=((0,y),(0,0),\ldots)$, i.e.,
%
\begin{equation}\label{leftright5}
w(y)=\big((0,y),(0,0),\ldots\big) + g(w(y))
\end{equation}
for all $y\in B^{E_2}_r(0)$. Comparing the $0$-th component
on both sides of (\ref{leftright5}),
we see that $w(y)_0=(0,y)+\bigl( f_1(w(y)_1),\, 0 \bigr)$
and thus
\[
w(y)_0=(\phi(y), y)\,,
\]
where $\phi\colon B^{E_2}_r(0)\to B^{E_1}_r(0)$ is the analytic function
given by
\begin{equation}\label{defirwphi5}
\phi(y):=f_1(w(y)_1)\,.
\end{equation}
Then $\phi(0)=0$,
and since
$g'(0) = \big( \lambda \circ \cS_b(A\circ \pr_1 ),\, \rho\circ \cS_b(B^{-1})\circ \cS_b(\pr_2)\big)
= D_1\oplus D_2$
with $D_1:= \lambda \circ \cS_b(A)$ and
$D_2:=\rho\circ \cS_b(B^{-1})$
of operator norm $<1$
(cf.\ proof of Proposition \ref{propseqana}),
we have
\[
(G^{-1})'(0)=(G'(0))^{-1}=(\id - (D_1\oplus D_2))^{-1}=\id+ \sum_{k=1}^\infty D^k_1\oplus D_2^k\,.
\]
Thus $w'(0).v  \hspace*{-.2mm}
= \hspace*{-.2mm} (G^{-1})'(0).((0,v),(0,0),\ldots) \hspace*{-.2mm}=\hspace*{-.2mm}
((0,v),(0,B^{-1}v),(0,B^{-2} v),\ldots)$
for all $v\in E_2$, and hence $\phi'(0)=0$.\\[2.5mm]
\emph{We claim that $w(y)_n=f(w(y)_{n+1})$ for each $n\in \N_0$
and each $y\in B_r^{E_2}(0)$.} If this is true,
then $w(y)\in \cU$
implies that $a^n\|w(y)_n\|$
is $<r$ and tends to $0$ as $n\to \infty$.
Hence $(\phi(y),y)\in \Gamma$
and thus $(\phi(y),y)\colon y\in B^{E_2}_r(0)\}  \sub \Gamma$.\\[2.5mm]
To prove the claim, let $n\in \N_0$.
Looking at the second component of the entry in (\ref{leftright5}) indexed
by $n+1$ and the first component of the entry indexed by $n$,
we see that
%
\begin{equation}\label{nothpart5}
\pr_2(w(y)_{n+1})=B^{-1}(\pr_2(w(y)_n)-\wt{f}_2(w(y)_{n+1}))
\end{equation}
and $\pr_1(w(y)_n)= f_1(w(y)_{n+1})$.
Multiplying (\ref{nothpart5})
with $B$, we obtain
\[
B\pr_2(w(y)_{n+1})=\pr_2(w(y)_n)-\wt{f}_2(w(y)_{n+1})\, ,
\]
whence
$\pr_2(w(y)_n)=B\pr_2(w(y)_{n+1})
+\wt{f}_2(w(y)_{n+1})=f_2(w(y)_{n+1})$ and hence indeed $w(y)_n=f(w(y)_{n+1})$.
The claim is established.

To get (b), it only remains to show
that $\Gamma\sub \{ (\phi(y),y)\colon y\in B^{E_2}_r(0)\}$.
To prove this inclusion, let $z_0=(x_0,y_0)\in \Gamma$; pick
$z:=(z_n)_{n\in \N_0}$ as in the definition of $\Gamma$.
Then $z \in \cU$ (since $a^n\|z_n\|<r$
and $a^n\|z_n\|\to 0$ by definition of $\Gamma$).
We claim that
%
\begin{equation}\label{convdir5}
z=((0,y_0),(0,0),\ldots)+g(z)\,.
\end{equation}
If this is true, then $G(z)=z-g(z)=((0,y_0),(0,0),\ldots)$
and hence $z=G^{-1}((0,y_0),(0,0),\ldots)=w(y_0)$.
As a consequence, $z_0=w(y_0)_0=(\phi(y_0),y_0)$
and thus $z_0\in \{ (\phi(y),y)\colon y\in B^{E_2}_r(0)\}$.
To prove the claim, note first that $\pr_2(z_0)=y_0$,
which is also the second component
of the $0$-th entry of the right hand side of~(\ref{convdir5}).
Given $n\in \N_0$,
equality of the first component of the index $n$ entry
of the sequences on the left and right of (\ref{convdir5})
means that
\[
\pr_1(z_n)=f_1(z_{n+1})\,,
\]
which holds by choice of $z$.
Next, if $n\geq1$ we use that $\pr_2(z_{n-1})=\pr_2 (f(z_n))=B\pr_2(z_n) + \wt{f}_2(z_n)$,
whence $B\pr_2(z_n)=\pr_2(z_{n-1})-\wt{f}_2(z_n)$
and hence
\[
\pr_2(z_n)=
B^{-1}(\pr_2(z_{n-1})-\wt{f}_2(z_n))\,.
\]
Therefore the second
components of the $n$-th entries of the sequences on the left and right
of (\ref{convdir5}) coincide.
As $n$ was arbitrary, (\ref{convdir5}) holds.

(c) Let $y,z\in B^{E_2}_r(0)$. Using that
$G$ is an isometry, we obtain
\begin{eqnarray*}
\|\phi(y)\!-\!\phi(z)\| \!& \!\leq\!&\!
\|(\phi(y),y)\!-\!(\phi(z),z)\| = \|w(y)_0\!-\!w(z)_0\|  \leq \|w(y)\!-\!w(z)\|_b\\
\!&\!=\!&\! \|G^{-1}((0,y),(0,0),\ldots)-G^{-1}((0,z),(0,0),\ldots)\|_b\\
\!&\!= \! &\!  \|((0,y-z),(0,0),\ldots)\|_b = \|y-z\|.
\end{eqnarray*}
\indent
(d) If $s\in \;]0,r]$, let $\Gamma_s$ be the set
of all $z_0\in B^E_s(0)$
for which there exists a sequence $(z_n)_{n\in \N_0}$
such that $a^n\|z_n\|<s$, $f(z_{n+1})= z_n$
and $a^n\|z_n\|\to 0$.
Applying (b) to $f|_{B^E_s(0)}$ instead of $f$,
we find that $\Gamma_s=\{(\psi(y),y)\colon y\in B^{E_2}_s(0)\}$
corresponds to the graph of an analytic function
$\psi\colon B^{E_2}_s(0)\to B^{E_1}_s(0)$.
Since $\Gamma_s\sub \Gamma$, it follows
that $\psi$ is the restriction of $\phi$ to $B^{E_2}_s(0)$
and $\Gamma\cap B^E_s(0)=\Gamma_s$.
Since $\Gamma_s$ has been obtained in the same way
as $\Gamma$, it has analogous properties.

(a) Let $s:=\frac{r}{c} \leq r$.
Then $\Gamma\cap B^E_s(0)=\Gamma_s$
(by (d)). Given
$z_0\in \Gamma_s$,
pick $(z_n)_{n\in \N_0}$
as in the proof of (d).
To see that $f(z_0)\in \Gamma$ (as required),
define $\zeta_n:=f(z_n)$
for $n\in \N_0$.
Then $a^n\|\zeta_n\|=a^n\|f(z_n)\|\leq \Lip(f) a^n  \|z_n\|<cs= r$
and $a^n\|\zeta_n\|\leq \Lip(f) a^n  \|z_n\|\to 0$,
showing that $f(z_0)=\zeta_0$ is in $\Gamma$.
\end{proof}
{\bf Proof of Theorem \ref{locumfd}.}
(a)
Let $E_1$ be the $a$-stable subspace
and~$E_2$ be the $a$-unstable subspace
of~$E:=T_p(M)$ with respect to $T_p(f)$,
and $\|.\|$ be a norm on $E=E_1\oplus E_2$
as in Definition \ref{defahyp}.
Let $\kappa\colon P\to U$, $Q$, $r>0$ and
$g\colon B^E_r(0)\to E$
be as in~\ref{reusable};
thus $g'(0)=T_p(f)$.
For $s\in \;]0,r]$,
let $\Gamma_s$ be the set
of all $z_0\in B_s^E(0)$
for which there exists a sequence $(z_n)_{n\in \N_0}$
in $E$ such that
$a^n\|z_n\|< s$ and $g(z_{n+1})=z_n$
for each $n$, and $\lim_{n\to\infty}a^n\|z_n\| = 0$.
After shrinking~$r$ if necessary,
Theorem \ref{mainthm5}
can be applied with $g$ in place of $f$
(cf.\ Remark \ref{remstrict}\,(d)).
Hence, there exists an analytic map
\[
\phi\colon B_r^{E_2}(0)\to B_r^{E_1}(0)
\]
with $\phi(0)=0$
and $\phi'(0)=0$,
such that $\Gamma_s=\{(\phi(y),y)\colon y\in B_s^{E_2}(0)\}$
and $g(\Gamma_{s/c})\sub \Gamma_s$
for each $s\in \;]0,r]$,
with $c:=\max\{1,\Lip(f)\}$.
Then $\Gamma_s$ is a submanifold
of $B_s^E(0)$ and $g$ restricts to an analytic
map $\Gamma_{s/c}\to\Gamma_s$ for each $s\in\;]0,r]$.
Now $\Omega_s:=\kappa^{-1}(\Gamma_s)$
is a submanifold of $\kappa^{-1}(B^E_s(0))$
(and hence of $M$),
such that $\Omega_{s/c}\sub \Omega_s$ is an open $p$-neighbourhood,
$f(\Omega_{s/c})\sub \Omega_s$,
and $f|_{\Omega_{s/c}}\colon \Omega_{s/c} \to\Omega_s$
is analytic.
Also, $T_p(\Omega_s)=E_2$
because $T_0(\Gamma_s)=E_2$,
and hence each $\Omega_s$ is a local $a$-unstable manifold
around $p$ with respect to $f$.

(b) We retain the notation from the proof of (a), set $B:=(T_p(f))|_{E_2}$
and pick $b\in\;]a,\frac{1}{\|B^{-1}\|}[$. 
We let $N$ be any local $a$-unstable manifold around $p$ with respect to $f$,
and $S \sub N$ be an open $p$-neighbourhood such that $f(S)\sub N$ and $f|_S\colon S \to N$
is analytic.
Consider a chart $\mu\colon V\to B^{E_2}_\tau(0)$
of~$N$ around~$p$ such that
$V\sub S$, $\mu(p)=0$ and $d\mu(p)=\id_{E_2}$.
There exists $\sigma\in \;]0,\tau]$
such that $h:=\mu \circ f\circ \mu^{-1}$
is defined on all of
$B^{E_2}_\sigma(0)$.
Since $h'(0)=T_p(f|_N)=B$ with $\frac{1}{\|B^{-1}\|} >b$,
Theorem \ref{IFT}\,(b) shows that,
after possibly shrinking~$\sigma$,
\begin{eqnarray}
h(B^{E_2}_s(0))  & = & B.B^{E_2}_s(0) \; \supseteq \;
B^{E_2}_{bs}(0)\;\; \mbox{for all $\, s\in \;]0,\sigma]$, \,and hence}\notag\\
h(B^{E_2}_{b^{-1}s}(0)) \! & \supseteq &
B^{E_2}_s(0)
\quad\mbox{for all $\, s\in \;]0,b \sigma]$.}\label{relvainfo7}
\end{eqnarray}
Now write
$g=(g_1,g_2)=g'(0)+\wt{g}\colon B^E_r(0)\to E_1\oplus E_2$,
where $\Lip(\wt{g})<a$
and $g'(0)=A\oplus B$ with $B$ as before and $A:= T_p(f)|_{E_1}$.
Then $Q\cap N$ is an immersed
submanifold of~$Q$ tangent to~$E_2$
and, after replacing~$N$ by an
open $p$-neighbourhood therein,
we may assume that~$N$ is a submanifold of~$Q$.
Since~$\kappa(N)$ is tangent to~$E_2$ at $0\in E$,
the inverse function theorem implies
that $\kappa(N)=\{(\psi(y),y)\colon y\in W\}$
for some open $0$-neighbourhood
$W\sub B^{E_2}_r(0)$
and analytic map $\psi\colon W\to E_1$
with $\psi(0)=0$, $\psi'(0)=0$ and $\Lip(\psi)\leq 1$
(after shrinking~$N$ if necessary).
Then $\mu:=\pr_2\circ \, \kappa|_N$ is a chart
for~$N$ with $\mu(p)=0$ and
$d\mu(p)=\id_{E_2}$
(where $\pr_2\colon E_1\oplus E_2\to E_2$).
Hence, by the discussion leading to (\ref{relvainfo7}),
there is $\sigma\in \;]0,r]$
with $B^{E_2}_\sigma(0)\sub W$ and
\begin{equation}\label{thsdfnd7}
g(\Theta_{b^{-1}s})\;\supseteq \; \Theta_s\quad \mbox{for all $\, s\in \;]0,\sigma]$,}
\end{equation}
where
$\Theta_s:=\{(\psi(y),y)\colon y\in B^{E_2}_s(0)\}$
for $s\in\;]0,\sigma]$.
Note that $\|z\|=\|y\|<s$
for all $s\in\;]0,\sigma]$ and $z=(\psi(y),y)\in\Theta_s$,
since $\Lip(\psi)\leq 1$.
Let $z_0\in \Theta_s$.
Recursively, using (\ref{thsdfnd7}),
we find a sequence $(z_n)_{n\in \N_0}$
such that $z_n\in \Theta_{b^{-n}s}$
and $g(z_n)=z_{n-1}$ for all $n\in \N$.
Then $\|z_n\|<b^{-n}s<a^{-n}s$
and $a^n\|z_n\|<(\frac{a}{b})^ns\to 0$ as $n\to \infty$,
whence $z_0\in \Gamma_s$.
Hence $\Theta_s\sub \Gamma_s$ and thus
$\Theta_s=\Gamma_s$ (as both sets are graphs of functions on the same domain).
Hence $\Theta_\sigma$ is an open submanifold
of $\Gamma_r$.
As a consequence, $\Omega_\sigma$ is an
open submanifold of $\Omega_r$ which contains $p$,
and it is also an open submanifold of $N$ as
$\Omega_\sigma=
\kappa^{-1}(\Gamma_\sigma)=\kappa^{-1}(\Theta_\sigma)
=\mu^{-1}(B^{E_2}_\sigma(0))$.
This completes the proof.\,\Punkt
{\footnotesize
Helge Gl\"{o}ckner, Universit\"at Paderborn, Institut f\"ur Mathematik,
Warburger Str.\ 100,\\
33098 Paderborn, Germany. E-Mail: glockner\at{}math.upb.de}
\end{document}